\documentclass[12pt]{elsarticle}



\usepackage{amsthm,amsmath,amssymb}

\usepackage{graphicx}

\usepackage[colorlinks=true,citecolor=black,linkcolor=black,urlcolor=blue]{hyperref}



\theoremstyle{plain}
\newtheorem{theorem}{Theorem}
\newtheorem{lemma}[theorem]{Lemma}
\newtheorem{corollary}[theorem]{Corollary}
\newtheorem{proposition}[theorem]{Proposition}

\newtheorem{claim}[theorem]{Claim}

\theoremstyle{definition}
\newtheorem{definition}[theorem]{Definition}

\newtheorem{conjecture}[theorem]{Conjecture}

\theoremstyle{remark}
\newtheorem{remark}[theorem]{Remark}



\title{\bf Proof of a conjecture of Bauer, Fan and Veldman}


\author{Tri Lai\\
\small Department of Mathematics\\[-0.8ex]
\small Indiana University\\[-0.8ex]
\small Bloomington, IN, U.S.A.\\
\small\tt tmlai@indiana.edu
}


\date{
\small Mathematics Subject Classifications: 05C35, 05C38}

\begin{document}


\begin{abstract}
For a 1-tough graph $G$ we define $\sigma_3(G) = \min\{\deg(u) + \deg(v)+
\deg(w):$ $\{u, v, w\}$ is an independent set of vertices$\}$ and $NC2(G)=\min \{|N(u)\cup N(v)|: d(u,v)=2\}$.
D. Bauer, G. Fan and H.J.Veldman proved that $c(G)\geq \min\{n,2NC2(G)\}$ for any 1-tough graph $G$ with $\sigma_3(G)\geq n\geq 3$, where $c(G)$ is the circumference of $G$ (D. Bauer, G. Fan and H.J.Veldman,
 \emph{Hamiltonian properties of graphs with large neighborhood unions},
Discrete Mathematics, 1991). They also conjectured a stronger upper bound for the circumference:
$c(G)\geq
\min\{n,2NC2(G)+4\}$.
 In this paper, we prove this conjecture.

\bigskip\noindent \textbf{Keywords:} 1-tough graphs; dominating cycle; longest cycle; k-connected graphs
\end{abstract}
\maketitle
\section{Introduction.}
We consider only finite undirected graphs without loops and multiple edges. A graph G is said to be \emph{k-connected} if there does not exist a set of $k-1$ vertices whose removal disconnects the graph. A graph $G$ is \emph{1-tough} if for every nonempty $S\subset V(G)$ the graph $G-S$ has at most $|S|$ components. All paths and cycles in this paper are simple paths and simple cycles, respectively. A set of vertices of the graph $G$ is \emph{independent} if no two of its elements are adjacent. The \textit{independence number} of $G$, denoted by $\alpha(G)$, is defined by setting $\alpha(G)=\max\{|U|:\ U\subseteq V(G) \text{ independent}\}$. A cycle $C$ of $G$ is called  a \emph{dominating cycle} if the vertices of graph $G-C$ is independent.  A graph which contains a dominating cycle is called a \emph{dominating graph}.

 The \emph{smallest union degree of order $3$}  of $G$, denoted by $\sigma_3(G)$,  is defined by setting
\begin{equation}
\sigma_3(G):=
\begin{cases}
\min \{\sum_{i=1}^3\deg(v_i): &\text{$\{v_1,v_2,v_3\}$ is independent if $\alpha(G)\geq 3$;}\\
3(n-1), &\text{ otherwise,}
\end{cases}
\end{equation}
where $\deg(v)$ is the degree of vertex $v$ in $G$. Denote by $N(v)$ the set vertices adjacent to $v$, called the neighbor set of $v$. We denote $N(x,y):=N(x)\cup N(y)$, for any two distinct vertices $x$ and $y$. Next, we define
\begin{equation}
NC2(G):=
\begin{cases}
\min\{| N(u)\cup N(v)|:\ d(u,v)=2\}, &\text{ if $G$ is not complete;}\\
 n-1, &\text{otherwise,}
\end{cases}
\end{equation}
where $d(u,v)$ is the distance between $u$ and $v$.

Denote by $c(G)$ the circumference of $G$, i.e. the length of the longest cycle in $G$. We have the following lower bound for the circumference of a 1-tough graph due to Bauer, Fan and Veldman.

\begin{theorem}[\cite{Bau91}, Theorem 26]\label{Bau}
If $G$ is 1-tough and $\sigma_3(G)\geq n\geq 3$, then
\begin{equation}
c(G)\geq \min\{n,2NC2(G)\}.
\end{equation}
\end{theorem}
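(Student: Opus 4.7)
The plan is to proceed by contradiction. Let $C$ be a longest cycle in $G$ and set $c:=|V(C)|$. Assume $c<\min\{n,2NC2(G)\}$. In particular $c<n$, so $V(G)\setminus V(C)$ is nonempty. The argument has two stages: first, show that every longest cycle must be a \emph{dominating} cycle under the joint hypotheses $\sigma_3(G)\geq n$ and 1-toughness; second, apply a classical crossing-lemma argument from a vertex off $C$ and combine it with the $NC2$ hypothesis to force $c\geq 2NC2(G)$, contradicting the assumption.

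For the first stage, I would prove the contrapositive: if a longest cycle $C$ is not dominating, then one can exhibit three pairwise non-adjacent vertices $\{a,b,d\}$ with $\deg(a)+\deg(b)+\deg(d)<n$. The construction takes one or two vertices from a non-trivial component $H$ of $G-V(C)$ together with a suitable ``successor'' on $C$ of an attachment of $H$. Upper bounds on their degrees come from (i) crossing-lemma exclusions, since any unwanted adjacency among these vertices or with certain partners on $C$ would yield a cycle longer than $C$; and (ii) the 1-toughness condition, which controls the number and placement of the attachments of $H$ on $C$. Summing gives a total at most $c<n$, contradicting $\sigma_3(G)\geq n$.

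Given that $C$ is dominating, pick $x\in V(G)\setminus V(C)$; then $N(x)\subseteq V(C)$. Orient $C$ and write $N(x)=\{u_1,\ldots,u_k\}$ in cyclic order, with $u_i^+$ denoting the successor of $u_i$ on $C$. The crossing lemma gives that $U^+:=\{u_1^+,\ldots,u_k^+\}$ is independent, and no $u_i^+$ is adjacent to $x$ unless $u_{i+1}=u_i^+$ (the ``consecutive'' case, treated separately). For any non-consecutive $u_i^+$ we then have $d(x,u_i^+)=2$ through the path $x\,u_i\,u_i^+$, and so $|N(x)\cup N(u_i^+)|\geq NC2(G)$. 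Picking two such successors $u_i^+\neq u_j^+$, I would combine the two $NC2$ inequalities.

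The crux, and the main obstacle, is the counting step that converts $|N(x)\cup N(u_i^+)|\geq NC2(G)$ into $c\geq 2NC2(G)$. The key idea is to pair every element of $N(x)\cup N(u_i^+)$ injectively with a ``forbidden'' vertex on $C$ that is excluded from $N(x)\cup N(u_i^+)$ by further crossing-lemma arguments: for instance, if $u_i^+$ is adjacent to some $w\in V(C)$, then $w^-\notin N(x)$, for otherwise the substitution $x\,u_i\,u_i^+\,w\,C^+\,u_i\text{-predecessor chain}$ produces a cycle of length $c+1$. Carrying out this pairing segment by segment along the arcs of $C$ between consecutive elements of $N(x)$ yields $|N(x)\cup N(u_i^+)|\leq c/2$, hence $c\geq 2NC2(G)$, which is the desired contradiction.
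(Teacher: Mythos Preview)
Your first stage (longest cycles are dominating under 1-toughness and $\sigma_3\ge n$) is Lemma~\ref{l1} here, and your outline of it is adequate. The gap is in the second stage.

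For a \emph{generic} successor $u_i^+$, the pairing you sketch cannot be carried out. The crossing lemma does give $N(x)\cap N(x)^+=\emptyset$ (two consecutive neighbours of $x$ on $C$ would lengthen $C$) and $N(x)^+\cap N(u_i^+)=\emptyset$ (this is your example ``$w\in N(u_i^+)\Rightarrow w^-\notin N(x)$''). But nothing forbids $u_i^+$ from having two \emph{consecutive} neighbours $w,w^+$ on $C$: to exploit both edges $u_i^+w$ and $u_i^+w^+$ in a reroute you must bypass the cycle-edges on \emph{both} sides of $u_i^+$, and with only $u_i\in N(x)$ you have no way to re-enter $C$ on the far side. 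Hence $N(u_i^+)\cap N(u_i^+)^+$ need not be empty, so $B\cap B^+$ need not be empty for $B=N(x)\cup N(u_i^+)$, and the inequality $|B|\le c/2$ can simply fail. (Your aside about the ``consecutive case $u_{i+1}=u_i^+$'' is moot: that case never occurs, by the first exclusion above.)

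The argument that actually works, recorded in this paper as Lemma~\ref{l3} (following Bauer--Fan--Veldman), repairs this by choosing the partner vertex $v$ on $C$ with \emph{both} $v^-,v^+\in N(u)$. Then $v$ can be excised from $C$ and re-attached through $u$ on either side, so any pair $w,w^+\in N(v)$ genuinely yields a cycle of length $c+1$; with this choice one obtains $B\cap B^+=\emptyset$ for $B=N(u)\cup N(v)$ and hence $c\ge 2|B|\ge 2\,NC2(G)$. The existence of such a $v$ is not automatic: one needs an off-cycle vertex $u$ whose neighbours on $C$ are dense enough to force an arc of length exactly~$2$, and this is where the hypotheses are invoked a second time, via the degree bound $\mu(C)\ge n/3$ of Lemma~\ref{l2}. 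Your proposal is missing both the special choice of $v$ and the argument guaranteeing that such a $v$ exists; without them the counting step does not go through.
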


We have a better lower bound on $c(G)$ due to Hoa in the theorem below.
\begin{theorem}[\cite{Hoa98}, Theorem 1]\label{HOA}
 If G is 1-tough graph and $\sigma_3(G)\geq n\geq 3$, then there exists an independent set of $\sigma_3-n+5$ elements $\{v_0,v_1,\ldots, v_{ \sigma_3(G)-n+4}\}$ such that the distance between $v_0$ and $v_i$ equals 2, for any $1\leq i \leq \sigma_3(G)-n+4$, and
 \begin{equation}
 c(G) \geq \min\left\{n,2\left| \bigcup_{i=0}^{\sigma_3(G)-n+4} N(v_i)\right|+2\right\}.
 \end{equation}
 \end{theorem}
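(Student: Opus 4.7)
The plan is to analyze a longest cycle $C$ in $G$ and apply the classical Bondy successor argument, strengthened by the hypothesis $\sigma_3(G)\ge n$. First I would reduce to the case $c(G)<n$, since otherwise the circumference bound is automatic and the required distance-$2$ independent set can be extracted from a separate argument using $\alpha(G)$ and $\sigma_3(G)\ge n$. So assume $c(G)<n$ and fix a longest cycle $C$. A standard structural lemma for $1$-tough graphs (already implicit in the proof of Theorem~\ref{Bau}) guarantees that $C$ is dominating, meaning $V(G)\setminus V(C)$ is an independent set.

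To construct the claimed independent set, I would pick $v_0\in V(G)\setminus V(C)$ and list $N(v_0)=\{y_1,\ldots,y_k\}$ in cyclic order along $C$, writing $y_i^+$ for the successor of $y_i$ on $C$. Bondy's crossing lemma for longest cycles forces $v_0 y_i^+\notin E(G)$ and $y_i^+ y_j^+\notin E(G)$ whenever $i\ne j$, so $\{v_0,y_1^+,\ldots,y_k^+\}$ is independent and every $y_i^+$ lies at distance exactly $2$ from $v_0$ via the path $v_0 y_i y_i^+$. Setting $v_i:=y_i^+$, it remains to show $k\ge\sigma_3(G)-n+4$ and to establish the circumference inequality.

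For the lower bound on $k$, I would apply $\sigma_3(G)\ge n$ to independent triples $\{v_0,y_i^+,y_j^+\}$. Because $C$ is dominating, $\deg(v_0)=k$, while Bondy's analysis gives $\deg(y_i^+)\le n-1-k$ after excluding $v_0$ and the other successors $y_\ell^+$, with an additional slack unit from the gap-length-$\ge 2$ condition between consecutive $y_i$'s on $C$. Plugging these bounds into $\sigma_3(G)\le\deg(v_0)+\deg(y_i^+)+\deg(y_j^+)$ and rearranging yields $k\ge\sigma_3(G)-n+4$ after a careful accounting of the gap structure on $C$.

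The hardest step is the circumference bound $c(G)\ge 2\,\bigl|\bigcup_{i=0}^{m} N(v_i)\bigr|+2$ with $m=\sigma_3(G)-n+4$. The classical Bondy inequality gives only $|C|\ge 2\deg(v_0)=2|N(v_0)|$; extending the right-hand side to the full union requires attaching to each $z\in\bigcup_i N(v_i)\cap V(C)$ two distinct reserved cycle vertices (its predecessor and successor on $C$) and showing that the reservations across different indices $i$ do not collide, since any collision would yield either a forbidden $y_i^+ y_j^+$ chord or a cycle extension contradicting the maximality of $C$. The additive $+2$ should arise from $v_0$ lying off $C$ together with one further cycle slot that escapes the double count. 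The main obstacle is precisely this final step: vertices of $\bigcup_i N(v_i)$ lying off $C$ (possible even under domination, since $y_i^+$ may have neighbors in $V(G)\setminus V(C)$) force a delicate case analysis, and securing the exact $+2$ rather than a bare $0$ uses both the distance-$2$ property and the maximality of $C$ in a coordinated way.
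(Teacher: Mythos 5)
This statement is quoted verbatim from \cite{Hoa98} (Theorem 1); the paper offers no proof of it, so your proposal has to stand on its own, and it does not. The most concrete failure is the degree count in your third paragraph. You set $\deg(v_0)=k$, bound $\deg(y_i^+)\le n-1-k$ \emph{from above}, and substitute into $\sigma_3(G)\le\deg(v_0)+\deg(y_i^+)+\deg(y_j^+)$. That chain gives $\sigma_3(G)\le k+2(n-1-k)=2n-2-k$, i.e.\ $k\le 2n-2-\sigma_3(G)\le n-2$, which is an \emph{upper} bound on $k$; any additional ``slack'' subtracted from the degree bounds only lowers it further. No rearrangement of these inequalities can produce the required lower bound $k\ge\sigma_3(G)-n+4$. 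The quantity that actually admits such a lower bound is not $\deg(v_0)$ but the number $s$ of $2$-intervals cut out on $C$ by $N(u)$ for a suitably chosen $u$ off $C$ with $\deg(u)=\mu(G)$ (Lemma \ref{l2}); it comes from a length count of the shape $n-1\ge|V(C)|\ge 3\deg(u)-s$ compared against $\sigma_3(G)$, which is the content of Lemma \ref{l4} (\cite{Hoa98}, Lemma 9) and is a genuinely different computation from the one you sketch.

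The second gap is the choice of $v_1,\dots,v_m$. You take \emph{all} successors $y_1^+,\dots,y_k^+$ of $N(v_0)$, but the vertices for which the neighborhood-union bound can be pushed through are specifically the inner vertices of the $2$-intervals, i.e.\ the vertices of $N(u)^+\cap N(u)^-$. For that set $A$, Lemma \ref{l5} (\cite{Hoa98}, Lemma 4) gives the single independence statement that $V(G-C)\cup N(u)^+\cup N(A)^+$ is independent; this is what forces $N(v_i)\subseteq V(C)$ for each $i$, shows that $\bigl(\bigcup_i N(v_i)\bigr)^+$ is disjoint from $\bigcup_i N(v_i)$, and hence doubles the whole union rather than just $|N(v_0)|$. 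For a generic successor $y_i^+$ lying in an interval of length at least $3$ none of this structure is available, and your proposed ``reservation'' scheme with its collision analysis --- which you yourself flag as the main obstacle, together with securing the additive $+2$ --- is exactly the part of the proof that is never carried out. As it stands the proposal establishes only the easy facts (domination of $C$, independence and distance $2$ for $\{v_0\}\cup N(v_0)^+$) and leaves both quantitative conclusions of the theorem unproved.
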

One readily sees that  Hoa's result  implies
\begin{equation}\label{HOAeq}
c(G)\geq \min\{n,2NC2(G)+2\}.
\end{equation}
Bauer, Fan and Veldman also conjectured in \cite{Bau91} that
\begin{conjecture}[\cite{Bau91}, Conjecture 27]\label{Bauconj}
Assume that $G$ is a 1-tough graph with $\sigma_3(G)\geq n\geq 3$. Then
\begin{equation}\label{conjeq}
c(G)\geq \min\{n,2NC2(G)+4\}.
\end{equation}
\end{conjecture}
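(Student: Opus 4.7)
The plan is to argue by contradiction: assume $c(G)\le\min\{n-1,\,2NC2(G)+3\}$, let $C$ be a longest cycle of $G$ with a fixed orientation, and put $R=V(G)\setminus V(C)$. Since $c(G)\le n-1$, $R$ is nonempty. By the same insertion argument used in the proofs of Theorems~\ref{Bau} and~\ref{HOA}, the hypothesis $\sigma_3(G)\ge n$ together with the maximality of $C$ forces $R$ to be independent, so $C$ is a dominating cycle. In particular, for every $v\in R$ and every $w\in N(v)\subseteq V(C)$, the successor $w^{+}$ and predecessor $w^{-}$ of $w$ along $C$ lie in $V(C)\setminus(N(v)\cup R)$; otherwise $C$ could be extended to a longer cycle.

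Next, apply Theorem~\ref{HOA} to produce an independent set $\{v_{0},v_{1},\dots,v_{k}\}$ with $k:=\sigma_{3}(G)-n+4\ge 4$ and $d(v_{0},v_{i})=2$ for $1\le i\le k$, together with the inequality $c(G)\ge 2|U|+2$, where $U:=\bigcup_{i=0}^{k}N(v_{i})$. Rerunning the opening argument in~\cite{Hoa98}, we may assume $v_{0}\in R$, so $U\subseteq V(C)$ by domination. Because $d(v_{0},v_{1})=2$ we already have $|U|\ge |N(v_{0})\cup N(v_{1})|\ge NC2(G)$, which recovers~(\ref{HOAeq}). The conjectured lower bound $c(G)\ge 2NC2(G)+4$ will thus follow either from the refinement $|U|\ge NC2(G)+1$, or from strengthening Hoa's cycle inequality to $c(G)\ge 2|U|+4$; I plan to pursue the latter.

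The main step, and the one that will carry the proof, is to exhibit two vertices of $V(C)\setminus (U\cup U^{+})$, where $U^{+}=\{w^{+}:w\in U\}$. Hoa's argument finds one such vertex by examining a long arc of $C$ between two consecutive elements of $U$ and comparing it with $U^{+}$. I would locate a second extra vertex by combining three ingredients: (a) the fact that $k\ge 4$ gives at least five pairwise independent vertices $v_{0},\dots,v_{4}$ at mutual distance at least $2$, whose neighborhoods impose several disjoint ``forbidden shift'' conditions on $V(C)$; (b) the cycle-insertion lemma, which rules out two consecutive elements of $U$ being adjacent along $C$ so that every gap between them has at least one interior vertex; and (c) the 1-toughness hypothesis applied to the set $U$ (or $U$ augmented by a single well-chosen successor), which bounds the number of components of $G-U$ by $|U|$ and thereby rules out the extremal configuration in which only one interior vertex lies outside $U\cup U^{+}$.

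The principal obstacle I anticipate is the boundary case in which $v_{0}$ has only two or three neighbors on $C$ that are almost evenly distributed around $C$: here Hoa's $+2$ is essentially tight and a second ``extra'' vertex cannot be produced directly from $N(v_{0})$. Handling this case should require a careful use of the auxiliary vertices $v_{2},v_{3},v_{4}$ together with 1-toughness to construct, for some well-chosen $w\in U$, a separating set $S\subseteq V(G)$ whose cardinality is strictly less than the number of components of $G-S$, producing the required contradiction. The bookkeeping of which vertices of $C$ are pinned by which shift relation is likely to dominate the length of the proof.
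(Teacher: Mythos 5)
Your proposal is a plan rather than a proof, and its central accounting step does not close. You aim to strengthen Hoa's inequality to $c(G)\geq 2|U|+4$ and identify the ``main step'' as exhibiting \emph{two} vertices of $V(C)\setminus(U\cup U^{+})$. But since $U\cap U^{+}=\emptyset$ and $|U^{+}|=|U|$, two such vertices only give $|V(C)|\geq 2|U|+2$, which is exactly Hoa's bound~(\ref{HOAeq}) and gains nothing; to reach $2|U|+4$ by this route you would need \emph{four} vertices outside $U\cup U^{+}$ (or two of them together with the separate refinement $|U|\geq NC2(G)+1$, which you explicitly set aside). Beyond the arithmetic, everything that would actually carry the argument --- ruling out the extremal configurations in which only a few vertices of $C$ lie outside $U\cup U^{+}$ --- is deferred to anticipated ``bookkeeping'' and a boundary case you acknowledge you cannot yet handle. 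That case analysis is precisely where the difficulty of the conjecture lives.

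For comparison, the paper does not attempt to improve Hoa's inequality for the full union $U=\bigcup_i N(v_i)$. It takes (\ref{HOAeq}) as given and separately excludes the two values $c(G)=2NC2(G)+3$ and $c(G)=2NC2(G)+2$, working with the single set $B=N(u)\cup N(v)$ supplied by Lemma~\ref{l3}. The hypothesis $|V(C)|=2|B|+3$ or $2|B|+2$ with $|B|=NC2(G)$ pins down the interval structure of $C$ almost completely (one $5$-interval, or a $3$- and a $4$-interval, or three $3$-intervals, and so on), and each configuration is eliminated by a combination of the ``bad path'' and ``small pair'' obstructions (Propositions~\ref{alpha} and~\ref{small}), Woodall's Hopping Lemma~\ref{hopping}, and $1$-toughness component counts as in Lemma~\ref{l7}. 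None of these tools, nor any substitute for them, appears in your proposal, so the gap is not merely one of presentation.
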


Our goal in the paper is to prove this conjecture. We divide the proof of the conjecture into two steps: we show first that $c(G)\not=2NC2(G)+3$, then show that $c(G)\not= 2NC2(G)+2$ (then the Conjecture \ref{Bauconj} follows (\ref{HOAeq})).

 We note that, in \cite{Hoa95} and \cite{Hoa98}, Hoa defined $\sigma_3(G):=k(n-\alpha(G))$ or $\infty$ when $\alpha(G)\leq 2$ (as opposed to being $k(n-1)$ in our definition above). However, in all definitions we always have the key condition $\sigma_3(G)\geq n$ when $\alpha(G)\leq 2$ and $n\geq 3$. This guarantees that we can use the results in \cite{Hoa95} and \cite{Hoa98} without redefining $\sigma_3(G)$. We have the same situation in the definition of $NC2(G)$. Hoa defined $NC2(G)=n-\alpha(G)$ when $G$ is the complete graph $K_n$ (as opposed to being $n-1$ in our definition). However, in this case, we have $c(G)=n$, then the Conjecture 3 is obviously true. Again, we do not need to worry about this minor difference in the definitions of $NC2(G)$.

\section{Preliminaries}

Let $C$ be a longest cycle in the graph $G$ on $n\geq3$ vertices. Denote by $\overrightarrow{C}$ the cycle $C$ with a given orientation. Denote by $x^-$ and $x^+$ the
\emph{predecessor} and \emph{successor} of $x$ on $\overrightarrow{C}$, respectively. Further define, $x^{+i}:=(x^{+(i-1)})^+$ and
$x^{-i}:=(x^{-(i-1)})^-$, for $i\geq 2$.
If $A\subseteq V(C)$, we dfine two sets $A^+:=\{x : x ^-\in A\}$, and $A^-:=\{x: x^+ \in A\}$.

 If $u$ and $v$ are on the cycle $C$, then $u\overrightarrow{C}v$ denotes the set of consecutive vertices on $C$ from $u$ to $v$ in the direction specified by $\overrightarrow{C}$. The same vertices, in reverse order, are given by $v\overleftarrow{C}u$. We consider $u\overrightarrow{C}v$ and $v\overleftarrow{C}u$ both as paths and as vertex sets, and we call them \textit{$C$-paths}. 
In this paper, we will use the notations system in Diestel \cite{Diestel} to represent paths and cycles.

If $G$ is a non-hamiltonian graph, then we define
\begin{equation}
\mu(C):=\max \{deg(v):v\in V(G-C)\},
\end{equation}
for every cycle $C$ in $G$, and
\begin{equation}
\mu(G):= \max \{ \mu(C): C \mbox{ is a longest cycle in } G\}.
\end{equation}

\begin{lemma}[\cite{Bau90}, Theorem 5]\label{l1}
Let $G$  be a 1-tough graph on $n$ vertices such that $\sigma_3(G)\geq n$. Then every longest cycle in $G$ is a dominating cycle.
\end{lemma}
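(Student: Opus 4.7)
The plan is to argue by contradiction: fix an orientation $\overrightarrow{C}$ of a longest cycle $C$, and assume that $C$ is not dominating. Then some component $H$ of $G - V(C)$ has $|V(H)| \ge 2$, and since $G$ is 1-tough (hence connected) $H$ has at least one neighbor on $C$. Write $T := N_C(V(H))$ for its trace on $C$, and $T^+ := \{u^+ : u \in T\}$ with respect to $\overrightarrow{C}$.

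From the maximality of $|C|$ I would first extract three non-adjacency facts. (i) $T \cap T^+ = \emptyset$: if both $u$ and $u^-$ lie in $T$, then any $H$-path from a neighbor of $u^-$ to a neighbor of $u$ can be spliced into $C$ in place of the edge $u^- u$ to yield a strictly longer cycle. (ii) $T^+$ is independent in $G$: given distinct $u, v \in T$ and a hypothetical edge $u^+ v^+$, together with an $H$-path between neighbors of $u$ and of $v$, I could reverse the arc $u^+\overrightarrow{C}v$ and splice in the $H$-path to obtain a longer cycle. (iii) No vertex of $T^+$ has a neighbor in $V(H)$, because such a neighbor would force $u^+ \in T$, contradicting (i). I would then invoke 1-toughness separately to rule out $|T| = 1$, since deleting a lone trace vertex would split $H$ from the rest of $C$ and produce two components from a one-vertex removal.

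With $|T| \ge 2$, choose any $h \in V(H)$ and any two distinct $u^+, v^+ \in T^+$. By (ii) and (iii) the triple $\{h, u^+, v^+\}$ is independent, so $\sigma_3(G) \ge n$ yields
\[
\deg(h) + \deg(u^+) + \deg(v^+) \ge n.
\]
On the other hand, every neighbor of $h$ lies in $V(H) \cup T$, so $\deg(h) \le |V(H)| + |T| - 1$; and by (ii), (iii) the vertices $u^+, v^+$ avoid all of $T^+$ on $C$ and all of $V(H)$ off $C$, so $\deg(u^+), \deg(v^+) \le n - |T| - |V(H)|$. Adding these estimates yields at most $2n - 1 - |V(H)| - |T|$, which one wants to push strictly below $n$.

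The main obstacle is precisely this final step: using only the bounds above together with $|V(H)| + |V(C)| \le n$ and $2|T| \le |V(C)|$ (the latter from (i)), the quantity $2n - 1 - |V(H)| - |T|$ is merely borderline in extremal configurations, so the contradiction is not yet strict. To close the remaining gap I would sharpen the degree estimates by combining the above with symmetric predecessor-based hopping on $T^-$, which forbids further edges at $u^+$ and $v^+$, and by applying 1-toughness to $T \cup V(H)$ in order to bound the number of arc-components of $C - T$. The small cases $|V(H)| = 2$, $|T| = 2$, or $V(H) \cup V(C) \subsetneq V(G)$ may each need a tailored variant of this sharpening.
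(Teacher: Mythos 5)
This lemma is not proved in the paper at all: it is imported verbatim as Theorem~5 of the cited reference [Bau90] (Bauer, Morgana, Schmeichel and Veldman), so there is no in-paper argument to compare yours against. Judged on its own terms, your setup is the standard and correct one --- the facts you label (i)--(iii) (that $T\cap T^+=\emptyset$, that $T^+$ is independent, and that no vertex of $T^+$ has a neighbour in $H$) all follow from the maximality of $C$ exactly as you describe, $|T|\ge 2$ does follow from 1-toughness, and $\{h,u^+,v^+\}$ is indeed an independent triple. The problem is the final degree count, and the gap there is not ``borderline'' as you claim: your estimates give $\deg(h)+\deg(u^+)+\deg(v^+)\le 2n-1-|V(H)|-|T|$, and to contradict $\sigma_3(G)\ge n$ you would need $|V(H)|+|T|\ge n$, i.e.\ $V(H)\cup T$ would have to be essentially all of $G$. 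In any typical configuration ($|V(H)|=2$, $|T|=2$, $n$ large) your upper bound is close to $2n$, nowhere near $n$. So the concluding step does not fail by an additive constant that a small sharpening could recover; it fails by roughly $n-|V(H)|-|T|$.

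The missing idea is the crossing (interlacing) count along the arcs of $C$ between consecutive vertices of $T$: using the maximality of $C$ one shows that for a suitable pair $y_1=u^+$, $y_2=v^+$ and each arc $A$ of $C-T$, a vertex $z\in A$ cannot simultaneously have $z\in N(y_1)$ and $z^+\in N(y_2)$ (else one reroutes through $H$ and gets a longer cycle). Summing over arcs bounds $\deg(y_1)+\deg(y_2)$ by roughly $|V(C)|$ rather than by $2(n-|T|-|V(H)|)$, and combined with $\deg(h)\le |V(H)|-1+|T|$, $|V(C)|+|V(H)|\le n$ and $|V(C)|\ge 2|T|$ this is what produces a sum below $n$. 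Moreover, with the weak hypothesis $\sigma_3(G)\ge n$ (as opposed to Bondy's $\sigma_3\ge n+2$ for 2-connected graphs), 1-toughness must enter the degree count itself --- for instance to control neighbours of $y_1,y_2$ lying in components of $G-C$ other than $H$ and to absorb the last additive constants via a tough-set violation --- not merely to guarantee $|T|\ge 2$ as in your outline. Your closing paragraph gestures at ``predecessor-based hopping'' and ``applying 1-toughness to $T\cup V(H)$'' but does not identify this mechanism, so as written the proposal does not constitute a proof.
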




\begin{lemma}[\cite{Hoa95}, Lemma 2]\label{l2}
Let $G$  be a 1-tough graph on $n$ vertices with $\sigma_3(G)\geq n\geq3$. If $G$ is non-hamiltonian, then
$G$ has a longest cycle $C$ such that
$\mu(C) \geq \frac{1}{3}n.$
\end{lemma}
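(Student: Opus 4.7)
The plan is to argue by contradiction: assume that every longest cycle $C$ of $G$ satisfies $\mu(C) < n/3$. Fix such a cycle $C$ and a vertex $v_0 \in V(G-C)$ with $d := \deg(v_0) = \mu(C)$. By Lemma~\ref{l1}, $C$ is dominating, so $N(v_0) \subseteq V(C)$; write $N(v_0) = \{x_1, \dots, x_d\}$ in cyclic order along $\overrightarrow{C}$.

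The key structural step is to show that the shift set $U^+ := \{x_i^+ : 1 \le i \le d\}$ is independent and disjoint from $N(v_0) \cup \{v_0\}$: an edge $x_i^+ x_j^+$ (or a membership $x_i^+ \in N(v_0)$) would permit a standard rotation through $v_0$ producing a cycle of length $|C|+1$, contradicting the maximality of $C$. Thus $\{v_0\} \cup U^+$ is an independent set of size $d+1$. Applying $\sigma_3(G) \ge n$ to each triple $\{v_0, x_i^+, x_j^+\}$ yields $\deg(x_i^+) + \deg(x_j^+) \ge n - d > 2n/3$ for all $i \ne j$, so at least $d-1$ of the vertices of $U^+$ have degree strictly greater than $n/3$.

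To close the argument, the idea is to exhibit a longest cycle $C'$ that avoids one of these high-degree vertices $x_i^+$; then $x_i^+ \in V(G-C')$ with $\deg(x_i^+) > n/3 \ge \mu(G) \ge \mu(C')$, a contradiction. If some arc $x_i \overrightarrow{C} x_{i+1}$ has length exactly $2$ (so that $x_i^+ = x_{i+1}^-$), then directly substituting $v_0$ for $x_i^+$ in $C$ produces such a $C'$ of the same length; this suffices unless the only arc of length $2$ happens to correspond to the unique ``low-degree'' index in $U^+$ identified above.

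The main obstacle lies in the remaining case in which essentially every arc $x_i \overrightarrow{C} x_{i+1}$ has length at least $3$. There $|C| \ge 3d$, and the symmetric predecessor set $U^- := \{x_i^- : 1 \le i \le d\}$ is disjoint from $U^+$, with each set satisfying the analogous independence and $\sigma_3$-degree bounds against $v_0$. I would close this case by analyzing chords between $U^+$, $U^-$, and the interior vertices of the arcs between consecutive $x_i$'s: any such chord, combined with an appropriate $v_0$-rotation, yields either a cycle strictly longer than $C$ (impossible) or a longest cycle missing some vertex of $U^+ \cup U^-$ of degree $> n/3$. A case analysis on the relative cyclic positions of the chord endpoints, together with the bound $\deg(x_i^+) + \deg(x_j^+) > 2n/3$, should force one of these substitutions to succeed in every sub-case and thereby produce the required contradiction.
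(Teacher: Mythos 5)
First, note that the paper does not contain a proof of this lemma: it is quoted directly from \cite{Hoa95} (Lemma~2), so there is no internal argument to compare against. Judged on its own merits, your proposal sets up the standard machinery correctly --- $\{v_0\}\cup N(v_0)^+$ is independent, the bound $\deg(x_i^+)+\deg(x_j^+)\ge n-d>2n/3$ follows from $\sigma_3(G)\ge n$, and the inner vertex of a $2$-arc can be exchanged with $v_0$ to give another longest cycle --- but it has a genuine gap at exactly the point you label ``the main obstacle,'' and that point carries the whole proof.

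Here is why the burden falls entirely on the unproven case. Under your contradiction hypothesis, the inner vertex of \emph{every} $2$-arc lies off some longest cycle and hence has degree less than $n/3$; your own degree-sum bound shows that at most one vertex of $N(v_0)^+$ can have degree at most $n/3$. Hence there is at most one $2$-arc, and its inner vertex is precisely the one low-degree exception, so the swapping step never produces a contradiction --- it only proves that the deferred configuration (at most one $2$-arc, all other arcs of length at least $3$) is the \emph{only} configuration that occurs. Your treatment of that configuration is a plan, not an argument: a chord joining two vertices deep inside long arcs does not in any obvious way yield a longer cycle, nor a longest cycle missing a vertex of $U^+\cup U^-$ (the rerouted cycle generally has a different length and omits interior vertices instead), and, worse, nothing you say guarantees such a chord exists at all --- if the arc interiors were pairwise non-adjacent you would have no move left. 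That existence is where $1$-toughness must do real work (for instance, $\omega\bigl(G-N(v_0)\bigr)\le \deg(v_0)=d$, while $G-N(v_0)$ contains the isolated vertex $v_0$ together with the $d$ nonempty arc interiors, forcing some interiors to be linked, possibly through vertices of $G-C$); your proposal invokes toughness only via the dominating-cycle lemma. Until the final case is carried out with the toughness hypothesis genuinely engaged, this is not a proof of the lemma.
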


 We get the following lemma by modifying the proof of Theorem 26 in \cite{Bau91}
\begin{lemma}\label{l3}
Let $G$ be a non-hamiltonian 1-tough graph on $n$ vertices with  $\sigma_3(G)\geq n\geq3$. We can find
a longest cycle $\overrightarrow{C}$, a vertex $u\notin V(C)$, and a vertex $v$ on $\overrightarrow{C}$, such that
$v^+,v^- \in N(u)$. Moreover, for $B:=N(u)\cup N(v)$ we have $B\subseteq V(C)$  and $B\cap B^+=B\cap B^-=\emptyset$.
\end{lemma}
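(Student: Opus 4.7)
My plan is to modify the cycle-extension arguments used to prove Theorem 26 in \cite{Bau91}, with Lemmas \ref{l1} and \ref{l2} as the main inputs. First, by Lemma \ref{l2}, I would fix a longest cycle $\overrightarrow{C}$ together with a vertex $u\notin V(C)$ realizing $\deg(u)=\mu(C)\ge n/3$. Non-hamiltonicity gives $|V(C)|\le n-1$, and Lemma \ref{l1} applied to $C$ says $V(G)\setminus V(C)$ is independent; in particular $N(u)\subseteq V(C)$.

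The next step is to locate $v$ by a short counting argument. Two consecutive $\overrightarrow{C}$-vertices both in $N(u)$ would allow us to insert $u$ and extend $C$, so no two $\overrightarrow{C}$-neighbors of $u$ can be adjacent on $C$; equivalently $N(u)\cap N(u)^{+}=\emptyset$. If in addition no two $\overrightarrow{C}$-neighbors of $u$ were at $C$-distance exactly $2$, then the neighbors of $u$ would be pairwise at $C$-distance at least $3$, forcing $|C|\ge 3|N(u)|\ge n$, contradicting $|V(C)|\le n-1$. So we can pick $a,b\in N(u)$ with $b=a^{+2}$, and then $v:=a^{+}$ satisfies $v^{-},v^{+}\in N(u)$.

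To upgrade to $N(v)\subseteq V(C)$, I would reroute through $u$ by forming the cycle $C':=v^{+}\overrightarrow{C}v^{-}uv^{+}$, which is longest with $V(C')=(V(C)\setminus\{v\})\cup\{u\}$; Lemma \ref{l1} applied to $C'$ then gives $V(G)\setminus V(C')$ independent. Combined with $v\notin N(u)$ (else $v,v^{+}$ would be consecutive neighbors of $u$, contradicting $N(u)\cap N(u)^{+}=\emptyset$), this forces $v$ to have no neighbor outside $V(C)$.

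The main obstacle is the disjointness $B\cap B^{+}=\emptyset$; the statement $B\cap B^{-}=\emptyset$ is equivalent, since both assert that no two consecutive vertices of $C$ lie in $B$. Supposing to the contrary that some consecutive pair $x^{-},x$ both lies in $B$, I split into four cases. The case $x^{-},x\in N(u)$ contradicts maximality of $C$ by inserting $u$; the case $x^{-},x\in N(v)$ is handled analogously using $C'$, since the edge $x^{-}x$ survives in $C'$ (because $v\notin\{x^{-},x\}$), and inserting $v$ between $x^{-}$ and $x$ in $C'$ yields a cycle of length $|C|+1$. The delicate mixed case, say $x^{-}\in N(u)$ and $x\in N(v)$ (the symmetric case follows by reversing the orientation of $C$), I would handle by exhibiting the cycle
\[
v\,x\overrightarrow{C}v^{-}\,u\,x^{-}\overleftarrow{C}v^{+}\,v,
\]
which glues two $C$-arcs together with the edges $vx$, $v^{-}u$, $ux^{-}$, $v^{+}v$; a direct count shows it traverses every vertex of $V(C)\cup\{u\}$ exactly once, yielding a cycle of length $|C|+1$, the desired contradiction. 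The degenerate sub-cases where one of the two arcs collapses (e.g.\ $x^{-}=v^{+}$ or $x=v^{-}$) fit the same template, which exhausts the cases.
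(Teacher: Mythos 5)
Your argument is correct, and it is essentially the route the paper intends: the paper's proof of Lemma \ref{l3} is only a pointer to the proof of Theorem 26 in \cite{Bau91} with Lemma \ref{l2} substituted for the degree bound, and your write-up is a faithful, fully detailed reconstruction of exactly that argument (dominating cycle via Lemma \ref{l1}, the gap-counting step using $\deg(u)\ge n/3$ to find $v$, the rerouted cycle $C'$ to control $N(v)$, and the four insertion/crossing cycles ruling out consecutive vertices of $B$). The only difference is that you supply the details the paper leaves to the citation; I see no gaps.
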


\begin{proof}
Follow the argument in the proof of Theorem 26 in \cite{Bau91}. The only difference is that the condition ``$G$ is 2-connected and $\sigma_3\geq n+2$" is  replaced
by ``$G$ is 1-tough and $\sigma_3\geq n$", and we use  Lemma \ref{l2} above instead of Lemma 22 in [2].
\end{proof}

Let $G$ be a graph satisfying the hypothesis in Lemma \ref{l3}. Assume that $B:=N(u)\cup N(v)=\{b_1,b_2,\dotsc,b_m\}$, where the vertices $v^{+}\equiv b_1,b_2,\dotsc,b_m \equiv v^-$ appear successively on $\overrightarrow{C}$. A $C$-path connecting two successive vertices of set $B$, i.e. having the form $b_i\overrightarrow{C}b_{i+1}$, is called an \emph{interval}. An interval consisting of $k$ edges is called a \emph{$k$-interval}, for $k\geq 2$ (by Lemma \ref{l3} there no ``1-interval"). Let $\mathcal{P}$ be a path in $G$. A vertex $x\in\mathcal{P}$ is called \emph{an inner vertex} of $\mathcal{P}$ if
$x$ is different from the ends of $\mathcal{P}$. We say that two $C$-paths $\mathcal{P}$ and $\mathcal{P}'$
are \emph{inner-connected}  if  some
inner vertex $x$ of $\mathcal{P}$  is incident to some inner vertex $y$ of
$\mathcal{P}'$. If $\mathcal{P}$ and $\mathcal{P}'$ are not inner-connected, then we say they are \emph{inner-disconnected}.

\begin{lemma}[\cite{Hoa98}, Lemma 4]\label{l5}
Let $G$ be a non-hamiltonian 1-tough graph on $n$ vertices with  $\sigma_3(G)\geq n\geq3$.  Let $C$ be a longest cycle in $G$, $u$ a vertex not in $C$, and $v$ a vertex in $C$ so that $v^+,v^-\in N(u)$.  Assume in addition that $A$ is the set of all inner vertices of 2-intervals whose end points are in $N(u)$. Then $V(G-C)\cup N(u)^+\cup N(A)^+$ and $V(G-C)\cup N(u)^-\cup N(A)^-$ are two independent sets.
\end{lemma}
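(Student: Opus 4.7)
The approach is by contradiction: assume two distinct vertices $p, q \in V(G-C) \cup N(u)^+ \cup N(A)^+$ are adjacent in $G$, and derive either a cycle of length greater than $|C|$ (contradicting the maximality of $C$) or a contradiction with Lemma~\ref{l1}. The analysis splits by the membership of $p$ and $q$ among the three constituent sets; the companion statement for $V(G-C) \cup N(u)^- \cup N(A)^-$ then follows by reversing the orientation of $\overrightarrow{C}$.

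Two cases are immediate. If $p, q \in V(G-C)$, Lemma~\ref{l1} gives $pq \notin E(G)$, since $C$ is a dominating longest cycle. If $p = y_1^+$ and $q = y_2^+$ with $y_1, y_2 \in N(u)$, then the standard crossing-cycle $u y_1 \overleftarrow{C} y_2^+ y_1^+ \overrightarrow{C} y_2 u$ uses all of $V(C)$ together with $u$, so it has length $|C|+1$, contradicting maximality. The same device shows $N(u) \cap N(u)^+ = \emptyset$, which we use repeatedly.

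The main structural tool for the remaining cases is the following observation: for every $a \in A$, both $a^-$ and $a^+$ belong to $N(u)$, so the cycle $C_a := (C - a) + u$, obtained by deleting $a$ from $C$ and re-routing through $u$ via the edges $u a^-$ and $u a^+$, is again a longest cycle of $G$. In $C_a$ the vertex $a$ is outside and $u$ is inside (with neighbors $a^-, a^+$ on $C_a$), so $V(G - C_a) = (V(G-C) \cup \{a\}) \setminus \{u\}$ is independent by Lemma~\ref{l1}. Using this swap of $u$ and $a$, the crossing-cycle argument from the $N(u)^+$ case transfers to rule out putative edges with one or both endpoints in $N(A)^+$: after checking that the successor on $C_a$ agrees with the successor on $C$ away from the local modification at $a$, essentially the same construction produces a cycle of length $|C|+1$, a contradiction.

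The main obstacle is the mixed case: an edge $xp$ with $x \in V(G-C)$ and $p \in N(u)^+ \cup N(A)^+$. Writing $p = y^+$ with $y \in N(u)$ (the $p \in N(A)^+$ analogue is treated via the corresponding $C_a$), the sub-case $x = u$ forces $y^+ \in N(u) \cap N(u)^+$, and the cycle $u y^+ \overrightarrow{C} y u$ has length $|C|+1$. For $x \neq u$, the independence of $V(G-C)$ rules out the direct shortcut $ux$, so a more delicate construction is needed. Here we split on whether $y^+ \in A$: if $y^+ \in A$, then $C_{y^+}$ is a longest cycle containing both $x$ and $y^+$ outside, and Lemma~\ref{l1} forbids $xy^+ \in E$. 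The hardest sub-case is $y^+ \notin A$; here one combines the insertion of $u$ at some 2-interval in $A$ (such as the one around $v$, noting $v \in A$) with the edge $xy^+$, exploiting the interval decomposition of $V(C)$ induced by $B = N(u) \cup N(v)$, to produce a cycle of length at least $|C|+1$. The technical core is a careful cyclic case analysis on the relative positions of $y$, $v^{\pm}$, and the other neighbors of $x$ on $C$, using that the degree condition from 1-toughness forces $x$ to have at least two neighbors on $C$ which cannot be consecutive.
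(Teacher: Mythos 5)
First, note that the paper does not prove this statement at all: it is quoted verbatim as Lemma~4 of \cite{Hoa98}, so there is no in-paper proof to compare against. Judged on its own terms, your proposal correctly disposes of the easy cases ($p,q\in V(G-C)$ via Lemma~\ref{l1}; $p,q\in N(u)^+$ via the standard crossing cycle $uy_1\overleftarrow{C}y_2^+y_1^+\overrightarrow{C}y_2u$), and the observation that $C_a:=(C-a)+u$ is again a longest cycle for $a\in A$ is a genuinely useful device --- it does settle, e.g., the sub-case $y^+\in A$ adjacent to $x\in V(G-C)$. But the two cases that carry all the difficulty are not actually proved.

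First, for an edge $y_1^+y_2^+$ with $y_1\in N(u)$ and $y_2\in N(a)$ for some $a\in A$ (and likewise $y_2\in N(a')$, $a'\neq a$), the crossing-cycle construction requires a \emph{single} off-cycle vertex adjacent to both $y_1$ and $y_2$; here $u$ sees $y_1$ but not necessarily $y_2$, $a$ sees $y_2$ but not necessarily $y_1$, and $u$ and $a$ are never simultaneously off the same cycle ($u\notin V(C)$ but $a\in V(C)$; $a\notin V(C_a)$ but $u\in V(C_a)$). So the assertion that ``essentially the same construction produces a cycle of length $|C|+1$'' does not go through, and you give no substitute. Second, in the mixed case $x\in V(G-C)\setminus\{u\}$ with $xy^+\in E(G)$, $y\in N(u)$, $y^+\notin A$, every construction available from your ingredients (the edge $xy^+$, a second neighbor $z$ of $x$ on $C$, the edges $uy$, $uv^{\pm}$) yields only a \emph{path} on $|C|+1$ or $|C|+2$ vertices (e.g.\ $x\,y^+\overrightarrow{C}v^-uv^+\overrightarrow{C}y$), never a closed cycle, because $xu\notin E(G)$ and $x$ has no neighbors off $C$. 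Closing such a path into a contradiction is exactly where Hoa's argument must invoke the hypotheses $\sigma_3(G)\geq n$ and $1$-toughness in an essential, quantitative way (a degree-sum count over an independent triple, or a cutset with too many components, as in the proof of Lemma~\ref{l7}); your proposal gestures at ``a careful cyclic case analysis'' but never produces the contradiction. As written, the proposal is an outline with the hardest steps missing rather than a proof.
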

We have two corollaries of Lemma \ref{l5} as follows.
\begin{corollary}\label{l5c}
If $G$ satisfies the hypothesis of Lemma \ref{l5} and $B:=N(u)\cup N(v)$, then
$B^+\cup V(G-C)$ and $B^-\cup V(G-C)$ are two independent sets.
\end{corollary}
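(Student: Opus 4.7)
The plan is to deduce the corollary directly from Lemma \ref{l5} by establishing the set inclusion $B \subseteq N(u) \cup N(A)$. Once this is in hand, taking successors on $\overrightarrow{C}$ yields $B^+ \subseteq N(u)^+ \cup N(A)^+$, hence
\[
V(G-C) \cup B^+ \;\subseteq\; V(G-C) \cup N(u)^+ \cup N(A)^+,
\]
and Lemma \ref{l5} says the right-hand side is independent, so any subset of it is independent as well. The argument for $B^- \cup V(G-C)$ is entirely symmetric, replacing successors by predecessors.

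First I would decompose $B = N(u) \cup N(v)$ and observe that $N(u) \subseteq N(u) \cup N(A)$ is trivial, so the whole corollary reduces to showing $N(v) \subseteq N(A)$. The key observation is that $v$ itself is an inner vertex of a $2$-interval whose endpoints both lie in $N(u)$. Concretely, label the elements of $B$ in cyclic order along $\overrightarrow{C}$ as $b_1, \ldots, b_m$ with $b_1 = v^+$ and $b_m = v^-$. Since Lemma \ref{l3} gives $B \cap B^+ = \emptyset$, the successor of $b_m = v^-$ is $v$, which is not in $B$, and the vertex after that is $v^+ = b_1$; so the wrap-around interval $b_m \overrightarrow{C} b_1$ consists of the three vertices $v^-, v, v^+$ and is therefore a $2$-interval whose endpoints $v^\pm$ are in $N(u)$ by the defining property of $v$. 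Thus $v \in A$, and every vertex adjacent to $v$ lies in $N(A)$, giving $N(v) \subseteq N(A)$.

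I do not anticipate any substantive obstacle: the corollary is essentially a repackaging of Lemma \ref{l5} once one notices that $v$ itself belongs to the set $A$. The one point worth verifying carefully is that the wrap-around interval $b_m \overrightarrow{C} b_1$ is to be counted among the $2$-intervals that define $A$, which is the natural reading given that $b_1, \ldots, b_m$ are enumerated in the cyclic order inherited from the orientation of $\overrightarrow{C}$.
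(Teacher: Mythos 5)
Your proof is correct and follows essentially the same route as the paper: the paper's own argument is precisely the observation that $v\in A$ (as the inner vertex of the $2$-interval $v^-\overrightarrow{C}v^+$ with ends in $N(u)$), whence $B^+\cup V(G-C)\subseteq V(G-C)\cup N(u)^+\cup N(A)^+$ and independence follows from Lemma \ref{l5}, with the $B^-$ case handled symmetrically. Your extra care in verifying that the wrap-around interval $b_m\overrightarrow{C}b_1$ is a genuine $2$-interval is a worthwhile elaboration but not a different method.
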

\begin{proof}
Note that $v\in A$, so we have
 \begin{align}
 B^+\cup V(G-C)&=N(u)^+\cup N(v)^+\cup V(G-C)\notag\\
 &\subseteq V(G-C)\cup N(u)^+\cup N(A)^+.
 \end{align}
By Lemma \ref{l5}, $V(G-C)\cup N(u)^+\cup N(A)^+$ is independent, so $B^+\cup V(G-C)$ is also independent. Analogously, $B^-\cup V(G-C)$ is independent.
\end{proof}

\begin{corollary}\label{rmk3}
If $G$ satisfies the hypothesis of Lemma \ref{l5}, a $k$-interval, for $k=2$ or $3$, does not inner-connect to any other 2-intervals in $G$.
\end{corollary}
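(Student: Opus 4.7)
My plan is to reduce everything to two applications of Lemma \ref{l5}, one using the $+$-shift and one using the $-$-shift. The critical observation I will exploit is that $v$ itself lies in $A$: Lemma \ref{l3} guarantees $v^-, v^+ \in N(u)$, so $v^-\,v\,v^+$ is a 2-interval whose two endpoints both lie in $N(u)$, and its unique inner vertex is $v$. Hence $v \in A$, and therefore $N(v) \subseteq N(A)$.

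I first establish two general location statements: for every $b \in B = N(u) \cup N(v)$,
\begin{equation*}
b^+ \in N(u)^+ \cup N(A)^+ \qquad \text{and} \qquad b^- \in N(u)^- \cup N(A)^-.
\end{equation*}
If $b \in N(u)$ both inclusions are immediate; if $b \in N(v)\setminus N(u)$, then $b \in N(A)$ because $v \in A$, and the inclusions follow. Applying this to the two endpoints of a 2-interval $P' = b_j\,b_j^+\,b_{j+1}$, its unique inner vertex $w := b_j^+ = b_{j+1}^-$ lies in \emph{both} $N(u)^+ \cup N(A)^+$ (as the successor of $b_j$) and $N(u)^- \cup N(A)^-$ (as the predecessor of $b_{j+1}$). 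In particular, $w$ belongs simultaneously to both independent sets provided by Lemma \ref{l5}.

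Now suppose, for a contradiction, that some inner vertex $x$ of a $k$-interval $P = b_i\overrightarrow{C}b_{i+1}$ with $k \in \{2,3\}$ and $P \neq P'$ is adjacent to $w$. The inner vertices of $P$ are at most $b_i^+$ and $b_{i+1}^-$ (these are equal when $k=2$ and distinct when $k=3$). If $x = b_i^+$, the inclusion above gives $x \in N(u)^+ \cup N(A)^+$, so the edge $xw$ lies inside the independent set $V(G-C) \cup N(u)^+ \cup N(A)^+$, contradicting Lemma \ref{l5}. If $x = b_{i+1}^-$, the symmetric argument in the $-$-independent set $V(G-C) \cup N(u)^- \cup N(A)^-$ produces the same contradiction.

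The only non-routine ingredient is the identification $v \in A$, which turns $N(v)$ into a subset of $N(A)$ and thereby makes the independent sets of Lemma \ref{l5} large enough to simultaneously cover each inner vertex of a 2-interval from both the $+$ side and the $-$ side; after that the argument is pure bookkeeping in two symmetric cases, so I do not expect any further obstacle.
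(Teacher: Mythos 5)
Your proof is correct and follows essentially the same route as the paper: the paper simply invokes its Corollary \ref{l5c} (that $B^+\cup V(G-C)$ and $B^-\cup V(G-C)$ are independent), which is itself proved exactly via your key observation that $v\in A$ and hence $N(v)^{\pm}\subseteq N(A)^{\pm}$, so you have merely inlined that corollary's proof before applying it to the inner vertices $b_i^+$ and $b_{i+1}^-$ of the $k$-interval.
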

\begin{proof}
An inner vertices $x$ in the $k$-interval is in $B^+\cup B^-$, for $k=2,3$, where $B:=N(u)\cup N(v)$. Thus by Corollary \ref{l5c}, $x$ is not adjacent to any inner vertex $y$ of a 2-interval, since $y$ is in $B^+\cap B^-$.
\end{proof}

The following lemma was also proved in \cite{Hoa98}.
\begin{lemma}[\cite{Hoa98}, Lemma 9]\label{l4}
Assume that $G$ is a non-hamiltonian 1-tough  graph on $n$ vertices
with $\sigma_3(G)\geq n\geq 3$. Then $G$ contains a longest cycle $C$
avoiding a vertex $u$ with $deg(u)=\mu(G)$ and $s\geq
\sigma_3(G)-n+4$, where $s$ is the number of $2$-intervals whose end points are in $N(u)$.
\end{lemma}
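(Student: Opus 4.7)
I would begin by fixing a longest cycle $C$ with $\mu(C)=\mu(G)$ and a vertex $u\in V(G-C)$ with $\deg(u)=\mu(G)=:d$; such $C,u$ exist by the definition of $\mu(G)$, and Lemma~\ref{l2} gives $d\geq n/3$ while Lemma~\ref{l1} ensures $V(G-C)$ is independent. Using Lemma~\ref{l3}, we may further assume there exists $v\in V(C)$ with $v^\pm\in N(u)$, setting up the interval decomposition; in particular, no two neighbors of $u$ are cyclically consecutive on $C$, so the $d$ neighbors of $u$ cut $\overrightarrow{C}$ into $d$ arcs of length at least $2$. Letting $s$ denote the number of these arcs of length exactly $2$, the arithmetic inequality $c(G)\geq 2s+3(d-s)$ gives the baseline bound
\[
s\;\geq\;3d-c(G).
\]

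To promote this into $s\geq\sigma_3(G)-n+4$, I would upper-bound $\sigma_3(G)$ by exhibiting a controlled independent triple. Let $A$ be the set of $s$ inner vertices of the $2$-intervals counted above. By Corollary~\ref{l5c}, $V(G-C)\cup A$ is an independent set. In the main case $c(G)\leq n-4$, we have $|V(G-C)\setminus\{u\}|\geq 3$, so any two vertices $u_1,u_2$ drawn from there form an independent triple with $u$; since $u_1,u_2$ lie off $C$, each has degree at most $\mu(G)=d$, hence $\sigma_3(G)\leq 3d$, and chaining with the baseline yields
\[
s\;\geq\;3d-c(G)\;\geq\;\sigma_3(G)-c(G)\;\geq\;\sigma_3(G)-n+4,
\]
as required.

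The remaining work is the boundary analysis for $c(G)\in\{n-3,n-2,n-1\}$. In those cases I would substitute elements of $A$ for one or both of the missing $u_i$'s in the triple, the key point being that by Corollary~\ref{l5c} any $y\in A$ lies in $B^+\cap B^-$ and has no edges into $V(G-C)\cup B^+\cup B^-$, so $N(y)\subseteq V(C)\setminus(B^+\cup B^-)$, which is a strong cap on $\deg(y)$. Combined with Corollary~\ref{rmk3}, this handles $c(G)=n-3$ and $c(G)=n-2$, where at least one $u_i\in V(G-C)\setminus\{u\}$ is still available to anchor the triple and each interval of length $\geq 4$ automatically contributes an extra unit to the baseline inequality. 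The main obstacle is the extremal case $c(G)=n-1$: then $V(G-C)=\{u\}$ and the triple must be drawn entirely from $\{u\}\cup A$. Here I would invoke the $1$-toughness of $G$ applied to the cut $N(u)$: its removal isolates $u$ and splits $C$ into $d$ interval interiors, so the toughness constraint (at most $d$ components of $G-N(u)$) forces many chord edges between distinct intervals. These chords, together with the no-$1$-interval rigidity, either tighten the degree cap on $A$-vertices enough to conclude, or enable a splicing operation that inserts $u$ into $C$ to produce a cycle longer than $C$, contradicting maximality.
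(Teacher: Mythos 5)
The paper offers no proof of this lemma at all: it is imported verbatim from \cite{Hoa98} (Lemma 9 there), so there is no in-paper argument to compare yours against. Judged on its own, your proposal gets the easy part right and leaves the hard part unproven. The baseline is fine: with $C$ chosen so that $\mu(C)=\mu(G)=d$, the $d$ neighbours of $u$ cut $C$ into $d$ segments of length $\geq 2$, giving $s\geq 3d-c(G)$; and when $c(G)\leq n-4$ an independent triple inside $V(G-C)$ gives $\sigma_3(G)\leq 3d$ and the conclusion follows. But the boundary cases are where the ``$+4$'' actually has to be earned, and your sketch does not close them. Already at $c(G)=n-3$ the triple of off-cycle vertices yields only $s\geq 3d-c(G)\geq\sigma_3(G)-n+3$, one short. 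Your proposed repair --- replacing an off-cycle vertex by a midpoint $a\in A$ of a $2$-interval --- gains nothing: the best degree cap that Lemma \ref{l5} gives is $N(a)\subseteq V(C)\setminus(N(u)^+\cup N(u)^-)$, hence $\deg(a)\leq c(G)-(2d-s)$, and the triple $\{u,u_1,a\}$ then gives $\sigma_3(G)\leq 2d+\bigl(c(G)-2d+s\bigr)=c(G)+s$, i.e.\ again $s\geq\sigma_3(G)-n+3$. The appeal to ``each interval of length $\geq 4$ contributes an extra unit'' is vacuous when every segment has length $2$ or $3$, which is exactly the tight configuration one must rule out.

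For $c(G)=n-1$ the situation is worse than an obstacle: with $V(G-C)=\{u\}$ the triple $\{u,a_1,a_2\}$ and the same degree cap give $\sigma_3(G)\leq d+2\bigl(c(G)-2d+s\bigr)$, and asking this to be at most $s+n-4$ forces $s\leq 3d-n-2$, which contradicts the baseline $s\geq 3d-c(G)=3d-n+1$. So the substitution strategy cannot work as described, and the remaining text (``toughness forces many chords \dots\ or enable a splicing operation'') is a placeholder rather than an argument. What is missing is the genuine content of Hoa's lemma: a mechanism --- re-choosing the longest cycle, a Woodall-type hopping/extension step, or a $1$-toughness contradiction tailored to the configuration in which all segments have length $2$ or $3$ and all off-cycle vertices attain degree $d$ --- that rules out the tight cases and upgrades $\sigma_3(G)-n+t$ to $\sigma_3(G)-n+4$ for $t\in\{1,2,3\}$. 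Until that is supplied, the proof stands only for $c(G)\leq n-4$.
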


Note that if $G$ is hamiltonian, then the inequality (\ref{conjeq}) in Conjecture \ref{Bauconj} is obviously true. Therefore, we only need to consider the case $G$ is non-hamiltonian. Follow the light of Lemmas \ref{l1}, \ref{l2}, \ref{l3},\ref{l5}, and \ref{l4}, we assume from now on a setup \textbf{(S)} as follows.
\bigskip

\textbf{SETUP (S)}
\begin{enumerate}
\item[($S_1$)]\emph{$G$ is a non-hamiltonian 1-tough  graph on $n$ vertices with $\sigma_3(G)\geq n\geq 3$.}
 \item[($S_2$)] \emph{$\overrightarrow{C}$ is a longest cycle of $G$,  $u$ is a vertex not in $V(C)$, and $v$ is a vertex in $V(C)$ such that $v^+,v^- \in N(u)$. Let $B:=N(u)\cup N(v)=\{b_1,b_2,\dotsc,b_m\}$, where the vertices $v^{+}\equiv b_1,b_2,\dotsc,b_m \equiv v^-$ appear successively on $\overrightarrow{C}$. }
 \item[($S_3$)]\emph{There at least four 2-intervals whose ends are all in $N(u)$ or are all in $N(v)$.}
\end{enumerate}

\bigskip

Given a graph $G$ satisfying assumption $(S_1)$, then a setup \textbf{(S)} in $G$  is determined uniquely by a vertices-cycle triple $(u,v,\overrightarrow{C})$.


\begin{remark}[\textbf{Reversing Orientation Trick}]\label{trick1}
 If we reverse the orientation of $C$,  then the set $B^-$ on $\overrightarrow{C}$ is now the set $B^+$ on $\overleftarrow{C}$. In the proof of Corollary \ref{l5c}, the independence of the set $B^{-}\cup V(G-C)$ follows the independence of $B^{+}\cup V(G-C)$ on the reverse-orientation of $C$. Reversing the orientation (of $C$) is a useful trick that will be used frequently in this paper.
\end{remark}

\begin{lemma}\label{l7}
(Assume a setup \textbf{(S)}) Assume that all intervals on $\overrightarrow{C}$ are pairwise inner-disconnected. Then there are at least two intervals of length greater than 3.
\end{lemma}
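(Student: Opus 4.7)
The plan is to argue by contradiction, counting components of $G-B$ and comparing with the $1$-tough bound $|B|=m$. Writing $n_k$ for the number of $k$-intervals, so that $m=n_2+n_3+n_{\ge 4}$, it suffices to reach a contradiction when $n_{\ge 4}\le 1$.

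First I would collect the ``automatic'' components of $G-B$. Both $u$ and $v$ are isolated in $G-B$ because $N(u),N(v)\subseteq B$, so $\{u\}$ and $\{v\}$ are two components. For a $2$-interval with inner vertex $x=b_i^+=b_{i+1}^-$ one has $x\in B^+\cap B^-$; by Corollary \ref{l5c} (together with the reversing-orientation trick of Remark \ref{trick1} applied to the $B^-$ side) $x$ has no neighbour in $V(G-C)$, its $C$-neighbours are in $B$, and the inner-disconnected hypothesis forbids an edge from $x$ to an inner vertex of any other interval; hence $\{x\}$ is a component. The same argument shows that the two inner vertices of each $3$-interval, which lie in $B^+$ and $B^-$ respectively, form an isolated pair-component. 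This yields $n_2+n_3$ further components.

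The last group consists of the inner vertices of intervals of length $\ge 4$ together with $V(G-C)\setminus\{u\}$. By Corollary \ref{l5c} every vertex of $V(G-C)$ avoids $B^+\cup B^-$, so any non-$C$ vertex can only hook onto the ``middle'' inner positions $x_2,\dots,x_{k-2}$ of a $k\ge 4$-interval. If $n_{\ge 4}=0$ these hooks do not exist, so each vertex of $V(G-C)\setminus\{u\}$ is a separate component; adding everything up gives at least $2+n_2+n_3+(|V(G-C)|-1)\ge 2+n_2+n_3>m=n_2+n_3$, violating $1$-toughness. If $n_{\ge 4}=1$, even the most aggressive merging into the single long interval still leaves $\{u\}$ as a separate component, producing at least $1+n_2+n_3+2=n_2+n_3+3$ components, again strictly more than $m=n_2+n_3+1$. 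Hence $n_{\ge 4}\ge 2$.

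The step I expect to require the most care is verifying cleanly that short-interval inner vertices are truly cut off from the rest of $G-B$: Lemma \ref{l3} (to describe $B^\pm$), Corollary \ref{l5c} applied in both orientations of $\overrightarrow{C}$, and the inner-disconnected hypothesis all have to line up simultaneously. Once that isolation is established, the toughness inequality finishes the argument almost mechanically.
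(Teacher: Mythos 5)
Your proof is correct and follows essentially the same route as the paper's: delete $B$ and show that $G-B$ has more than $|B|$ components, using Corollary \ref{l5c} together with the inner-disconnectedness hypothesis to cut off the inner vertices of the short intervals, then invoke $1$-toughness. One bookkeeping slip: $v$ is itself the inner vertex of the $2$-interval $v^-vv^+$, so $\{v\}$ is already one of your $n_2$ singleton components and should not be counted a second time --- but the corrected totals (at least $m+1$ components in each case) still exceed $|B|=m$, so the contradiction survives.
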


\begin{proof}
Recall from the setup \textbf{(S)} that $B:=\{b_{1},\dots,b_{m}\}$, for some positive integer $m$.

Assume otherwise that there is at most one interval of length greater than 3. Then other intervals have length at most 3, so their inner vertices are in $B^+\cup B^-$. We will show that $G$ is not a 1-tough graph.

Consider the graph $G-B$. We have two facts stated below.

(1) Two vertices $b_{i}^{+}$ and $b_{j}^{+}$ are not in the same component of $G-B$, for any $1\leq  i\not=j\leq m$.

 Indeed, assume otherwise that we can find a path $\mathcal{P}=v_{1}v_2\dots v_{t}$ in $G-B$ connecting $b_{i}^{+}$ and $b_{j}^{+}$, i.e. $v_1\equiv b_{i}^{+}$  and $v_t\equiv b_{j}^{+}$. Since we have at most one interval of length greater than 3, we can assume that $b_j^+$ is in an interval of length at most 3. By Corollary \ref{l5c} and the inner-disconnectedness of the intervals, $v_{t-1}$ must be an inner vertex of the interval $b_{j}\overrightarrow{C}b_{j+1}$ (besides the inner vertex $v_t\equiv b_j^+$). Thus, $b_{j}\overrightarrow{C}b_{j+1}$ must have length 3, and $v_{t-1}\equiv b_j^{+2}\equiv b_{j+1}^-\in B^-$. By the Corollary \ref{l5c} and and the inner-disconnectedness of the intervals again, $v_{t-2}$ in turn must be an inner vertex of the interval $b_{j}\overrightarrow{C}b_{j+1}$. However, this implies that $b_{j}\overrightarrow{C}b_{j+1}$ must have length at least 4, a contradiction.

(2) Two vertices $b_{i}^{+}$ and $u$ are not in the same component of $G-B$, for any $i=1,2,\dotsc,m$.

 Indeed, assume otherwise that there is a path $\mathcal{P}=v_{1}\dots v_{t}$ in $G-B$ connecting them, i.e. $v_1\equiv b_{i}^{+}$  and $v_t\equiv u$. Then $v_{t-1}$ is in $N(u)\subseteq B$, a contradiction to the fact that $v_{t-1}$ is a vertex in $G-B$.

From (1) and (2), the graph $G-B$ has at least $m+1$ distinct components, so that each of them contains at most one vertex in the set $\{u\} \cup B^+$. This implies that $G$ is not 1-tough, which contradicts our setup \textbf{(S)}.


\end{proof}

We have two new definitions stated below.

\begin{definition} A pair of vertices $(x,y)$ in graph $G$ is called a \emph{small pair}
if $| N(x,y) |\leq |B| -1$ and $d(x,y)=2$.
\end{definition}

\begin{definition}
Assume $b_i$ is a  vertex in $B:=N(u)\cup N(v)$ such that $1<i<m$, i.e. $b_i \not= v^+,v^-$. A path
 $\mathcal{P}$ in $G$ is called a \emph{bad path} if it has one of
the following two forms:

(i) $\mathcal{P}$ consists of all vertices in  $v^+\overrightarrow{C}b_i^-$, and the ends of $\mathcal{P}$
are $v^+$ and some vertex $b_j\in B$, for $1\leq j<i$.

(ii) $\mathcal{P}$ consists of all vertices in $b_i^+\overrightarrow{C}v^-$, and the ends of $\mathcal{P}$
are $v^-$ and  some $b_j \in B$, for $i<j\leq m$.
\end{definition}


We have two key results about small pairs and bad paths shown below.
\begin{proposition}\label{small}
If $|B| =NC2(G)$, then there are no small pairs.
\end{proposition}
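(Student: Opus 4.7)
The plan is to derive an immediate contradiction straight from the definitions. The key observation is that, by the definition of $NC2(G)$ as the minimum of $|N(x)\cup N(y)|$ taken over pairs of vertices with $d(x,y)=2$, every pair $(x,y)$ with $d(x,y)=2$ automatically satisfies $|N(x)\cup N(y)|\geq NC2(G)$. Under the hypothesis $|B|=NC2(G)$, this becomes $|N(x,y)|\geq |B|$ for every such pair.

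Then I would argue by contradiction. Suppose that a small pair $(x,y)$ exists in $G$. By the definition of a small pair we have $d(x,y)=2$ and $|N(x,y)|\leq |B|-1$. The observation above, applied to $(x,y)$, forces $|N(x,y)|\geq |B|$, which directly contradicts $|N(x,y)|\leq |B|-1$. Hence no small pair can exist, which is exactly the conclusion of the proposition.

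The only minor subtlety is the convention for complete graphs: if $G=K_n$, the paper sets $NC2(G):=n-1$ by fiat, but in that case no two vertices of $G$ are at distance $2$, so the small-pair condition is vacuously unsatisfiable. Our Setup \textbf{(S)} rules this case out anyway, since $G$ is assumed non-hamiltonian. Thus there is no real obstacle: the proposition is essentially a restatement of the defining extremal property of $NC2(G)$ in the presence of the equality $|B|=NC2(G)$, and is recorded separately only because it will be invoked repeatedly in the sequel.
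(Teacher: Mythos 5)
Your proof is correct and is essentially identical to the paper's: both derive $|B| = NC2(G) \leq |N(x,y)| \leq |B|-1$ directly from the definitions and conclude by contradiction. The extra remark about the complete-graph convention is harmless and correctly dispatched.
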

\begin{proof}
Assume otherwise that $|B| =NC2(G)$ and $(x,y)$ is a small pair. Then $|N(x,y)|\leq |B|-1$. By definition of $NC2(G)$, we have $|B|=NC2(G)\leq |N(x,y)|\leq |B|-1$,
a contradiction.
\end{proof}
\begin{proposition}\label{alpha}
There are no bad paths in $G$.
\end{proposition}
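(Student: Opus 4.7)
The plan is a direct contradiction argument. Suppose $\mathcal{P}$ is a type (i) bad path, so $V(\mathcal{P}) = v^+\overrightarrow{C}b_i^-$ and its two distinct endpoints are $v^+$ and $b_j$ for some $2 \le j < i$ (the case $j = 1$ would force the two endpoints to coincide at $v^+$). Write $Q := b_i\overrightarrow{C}v^-$. The three pieces $\{v\}$, $V(\mathcal{P})$ and $V(Q)$ are pairwise disjoint and partition $V(C)$; together with $u \notin V(C)$ they form a set of size $|V(C)|+1$. The goal is to thread these four blocks into a single cycle $C'$ whose vertex set is exactly $V(C) \cup \{u\}$, giving a cycle longer than $C$ and contradicting the maximality of $C$.

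The edges available to do the threading are: all edges of $C$; all edges of $\mathcal{P}$; the two edges $uv^+$ and $uv^-$ from the setup; and at least one of $b_i u, b_i v$ (since $b_i \in B$) and at least one of $b_j u, b_j v$ (since $b_j \in B$). Splitting on the two binary choices for these last two edges, I obtain four cases:
\begin{enumerate}
\item[(a)] if $b_i u, b_j v \in E(G)$, take $C' := v\, \overleftarrow{C}\, b_i\, u\, v^+ \xrightarrow{\mathcal{P}} b_j\, v$;
\item[(b)] if $b_i v, b_j u \in E(G)$, take $C' := v\, b_i\, \overrightarrow{C}\, v^-\, u\, b_j \xleftarrow{\mathcal{P}} v^+\, v$;
\item[(c)] if $b_i v, b_j v \in E(G)$, take $C' := v\, b_j \xleftarrow{\mathcal{P}} v^+\, u\, v^-\, \overleftarrow{C}\, b_i\, v$;
\item[(d)] if $b_i u, b_j u \in E(G)$, take $C' := v\, v^+ \xrightarrow{\mathcal{P}} b_j\, u\, b_i\, \overrightarrow{C}\, v^-\, v$.
\end{enumerate}
In every case it is immediate that all required edges exist (each is either an edge of $C$, an edge of $\mathcal{P}$, one of $uv^{\pm}$, or the case-defining edge), and that each of the blocks $\{v\}$, $V(\mathcal{P})$, $\{u\}$, $V(Q)$ is traversed exactly once. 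Hence $C'$ is a simple cycle with $V(C') = V(C) \cup \{u\}$, of length $|V(C)| + 1 > |V(C)|$, contradicting the choice of $C$ as a longest cycle.

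Type (ii) bad paths are dispatched by the Reversing Orientation Trick of Remark~\ref{trick1}: reversing the orientation of $C$ swaps $v^+$ and $v^-$ and converts a type (ii) bad path into a type (i) bad path on $\overleftarrow{C}$, to which the argument above applies verbatim. The only ``obstacle'' here is the case bookkeeping; once one sees the clean partition $V(C) = \{v\} \sqcup V(\mathcal{P}) \sqcup V(Q)$ and has the setup edges $uv^{\pm}$ in hand, each of the four cycles $C'$ essentially writes itself.
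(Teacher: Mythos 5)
Your proof is correct and follows essentially the same route as the paper: both arguments exploit that each of $b_i,b_j\in B=N(u)\cup N(v)$ is adjacent to $u$ or $v$, split into the same four cases, and in each case splice $\{v\}$, $\mathcal{P}$, $u$, and $b_i\overrightarrow{C}v^-$ into a cycle on $V(C)\cup\{u\}$, contradicting the maximality of $C$ (your cycles (a)--(d) coincide with the paper's (3), (4), (2), (1) up to starting point and direction). The paper likewise dismisses form (ii) by symmetry, so there is nothing to add.
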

\begin{proof}
Suppose otherwise that there is a bad path $\mathcal{P}$ of form (i). Assume that $v^+$ and $b_j$ are two ends of $\mathcal{P}$. We will construct a cycle $C'$ that is longer than $C$, and then get a contradiction. There are four possible cases as follows:
\begin{enumerate}
\item[(1)] If $b_iu$ and $b_ju \in E(G)$, then let $C':=\  v^+vv^-\overleftarrow{C}b_iub_j\mathcal{P}v^{+}$ (see Figure \ref{Fig1}(a)).

\item[(2)] If $b_iv$ and $b_jv \in E(G)$, then let $C':=\  v^+uv^-\overleftarrow{C}b_ivb_j\mathcal{P}v^{+}$ (see Figure \ref{Fig1}(b)).

\item[(3)] If $b_jv$ and  $b_iu \in E(G)$, then let
$C':=\  v^+ub_i\overrightarrow{C}v^-vb_j{}_{\mathcal{P}}v^{+}$ (see Figure \ref{Fig1}(c)).

\item[(4)] If $b_ju$ and $b_iv \in E(G)$, then let
$C':=\ v^+vb_i\overrightarrow{C}v^-ub_j\mathcal{P}v^{+}$ (see  Figure \ref{Fig1}(d)).
\end{enumerate}
This completes the proof for the case where $\mathcal{P}$ is a bad path of form (i). The case where $\mathcal{P}$ is a bad path of form (ii) follows similarly and is omitted.
~\end{proof}

\begin{figure}\centering
\includegraphics[width=0.80\textwidth]{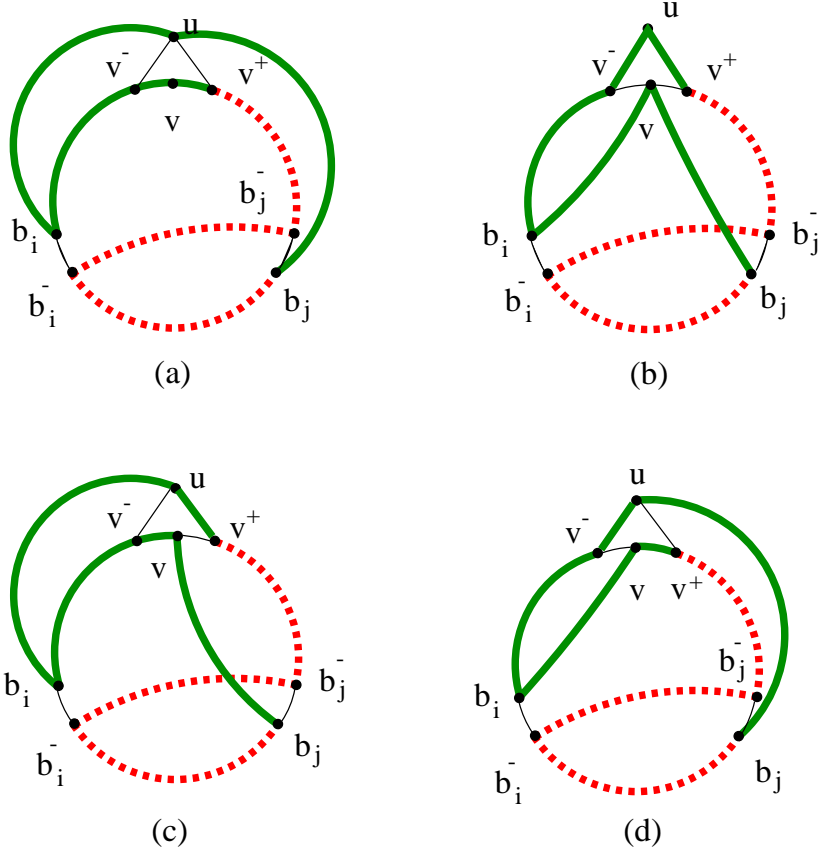}
\caption{Illustrating the proof of Proposition \ref{alpha}. Path $\mathcal{P}$ is the dotted one.}
\label{Fig1}
\end{figure}
\begin{remark}[\textbf{Interchanging roles trick}]\label{trick2}
Consider a new longest cycle $\overrightarrow{\widetilde{C}}:= uv^+\overrightarrow{C}v^-u$ (the orientation of $\widetilde{C}$ follows the order of vertices in its representation). It is easy to see that the triple $(v,u,\overrightarrow{\widetilde{C}})$ determines a new setup \textbf{(S')} in $G$ satisfying three conditions $(S_1)$, $(S_2)$ and $(S_3)$, moreover it has the same set $B$ as \textbf{(S)} does. In particular situations, we need to consider two cases that are the same, except for the roles of $u$ and $v$ are interchanged (for example cases (1)-(2) and cases (3)-(4) in the proof of Propositions \ref{alpha}). Then we only need to consider the first case, the second case is obtained by applying again the argument in the first case to the new setup \textbf{(S')}. Intuitively, the second case in the original setup \textbf{(S)} becomes the first case in the new setup \textbf{(S')}. Thus, for example, in the proof of Proposition \ref{alpha}, we only need to consider two cases (1) and (3), and then the other cases follow naturally from the trick above. We call this trick the \textit{interchanging roles trick}.  Together with the reversing orientation trick in Remark \ref{trick1}, the interchanging roles trick is the main ingredient of our case-by-case proofs.
\end{remark}

We finish this section by quoting a result due to Woodall,  sometimes called Hopping Lemma (see \cite{Boar} and \cite{Wood}).

\begin{lemma}\label{hopping}
Let $\overrightarrow{C}$ be a cycle of length $m$ in a graph $G$. Assume that $G$ contains no cycle of length $m+1$ and no cycle $C'$ of length $m$ with $\omega(G-V(C'))<\omega(G-V(C))$, and $u$ is an isolated vertex of $G-V(G)$. Set $Y_0=\emptyset$ and, for $i\geq i$,
\begin{align}
X_i&=N(Y_{i-1}\cup\{u\}),\\
Y_i&=(X_i\cap V(C))^+\cap (X_i\cap V(C))^-.
\end{align}
Set $X=\bigcup_{i=1}^\infty X_i$, and $Y=\bigcup_{i=1}^{\infty}Y_i$. Then
\begin{enumerate}
\item[(a)] $X\subseteq V(C)$;
\item[(b)] if $x_1,x_2\in X$, then $x_1^+\not=x_2$;
\item[(c)] $X\cap Y=\emptyset$;
\item[(d)] $Y$ is independent.
\end{enumerate}
\end{lemma}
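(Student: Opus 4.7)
The plan is to prove (a)--(d) by a simultaneous induction on the level index $i$ used in the definitions of $X_i$ and $Y_i$, routed through an auxiliary \emph{hopping claim}: for each $y\in Y$ there is an $m$-cycle $C_y$ whose vertex set is $(V(C)\setminus\{y\})\cup\{u\}$, and in which $y$ is an isolated vertex of $G-V(C_y)$. Since $G$ is finite, the monotone sequences $X_i$ and $Y_i$ stabilize after finitely many steps, so the induction terminates.

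For the base case of the hopping claim, if $y\in Y_1$ then both $y^-$ and $y^+$ lie in $N(u)$, and the rotation
\begin{equation}
C_y:=y^-\overleftarrow{C}y^+uy^-
\end{equation}
yields an $m$-cycle on the vertex set $(V(C)\setminus\{y\})\cup\{u\}$. Because $u$ was isolated in $G-V(C)$, deleting $u$ lowers the component count of the complement by exactly one, while inserting $y$ raises it by at most one, with equality only if $y$ itself is isolated in $G-V(C_y)$. The minimality hypothesis $\omega(G-V(C_y))\geq\omega(G-V(C))$ then forces $y$ to be isolated. For the inductive step, if $y\in Y_i$ with $i\geq 2$, each of $y^-,y^+$ belongs to $X_i=N(Y_{i-1}\cup\{u\})$, so there are $y',y''\in Y_{i-1}\cup\{u\}$ with $y'y^-,y''y^+\in E(G)$. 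Applying the induction hypothesis to $y'$ produces an $m$-cycle $C_{y'}$ that replaces $y'$ by $u$, and an analogous two-edge rotation at $y$ performed inside $C_{y'}$ delivers $C_y$; the same component counting argument then forces $y$ to be isolated in $G-V(C_y)$.

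Granting the hopping claim, properties (a)--(d) follow by transferring the base-case arguments from the pair $(C,u)$ to each pair $(C_z,z)$ with $z\in Y$. For (a), each element of $X_i$ is a neighbor of some $z\in Y_{i-1}\cup\{u\}$ which is isolated outside its associated $m$-cycle, so that neighbor must lie on the cycle and hence in $V(C)$. For (b), two consecutive vertices $x,x^+\in X$ that share a common ``parent'' $z\in Y\cup\{u\}$ would allow insertion of $z$ between them in the associated $m$-cycle, producing a forbidden $(m+1)$-cycle; the case where $x$ and $x^+$ come from different levels is handled by composing the corresponding rotations. Properties (c) and (d) follow in the same spirit: a vertex in $X\cap Y$, or an edge $y_1y_2$ between two elements of $Y$, translates via the rotations into either an $(m+1)$-cycle or an $m$-cycle that strictly decreases $\omega(G-V(\cdot))$, both of which are forbidden by hypothesis. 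In particular, for the ``two adjacent $Y$-vertices'' case, a direct counting shows that splicing in both detours $z_1\to x_1\to x_1^+$ and $z_2\to x_2\to x_2^+$ together with the chord $y_1y_2$ produces a cycle with $m+1$ vertices.

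The main obstacle will be showing that the iterated rotations in the inductive step of the hopping claim actually yield a \emph{simple} cycle. Each hop replaces one cycle vertex by $u$ via a length-two detour, and successive hops could in principle reuse arcs already altered by earlier rotations. Property (b), available from the induction hypothesis at each stage, guarantees that the $X$-vertices used as endpoints of the detours are pairwise non-consecutive on the current cycle, which is precisely the combinatorial condition required for the successive detours to remain vertex-disjoint. The corner cases, in which $y^-$ or $y^+$ is close on $C$ to $y'$ or $y''$, demand the most careful bookkeeping, and are where (b) must be invoked at stage $i-1$ before its conclusion for stage $i$ is fully established.
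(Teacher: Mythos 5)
The paper does not reprove parts (a)--(c) at all: it cites Woodall for them and only supplies the two-line deduction of (d) from (c) (if $y_1,y_2\in Y$ were adjacent with $y_1\in Y_i$, then $y_2\in N(Y_i\cup\{u\})=X_{i+1}\subseteq X$, so $y_2\in X\cap Y$, contradicting (c)). You instead set out to reprove the whole Hopping Lemma from scratch. Your base case is correct and is indeed the heart of Woodall's argument: the rotation $y^-\overleftarrow{C}y^+uy^-$ is an $m$-cycle on $(V(C)\setminus\{y\})\cup\{u\}$, and the count $\omega(G-V(C_y))\le\omega(G-V(C))-1+1$, combined with the minimality hypothesis on $\omega$, forces $y$ to be isolated in $G-V(C_y)$.

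The inductive step, however, has a genuine gap, and it is not only the simplicity issue you flag at the end. When $y\in Y_i$ with $i\ge 2$, the parents $y'$ and $y''$ of $y^-$ and $y^+$ in $Y_{i-1}\cup\{u\}$ need not coincide. If $y'\ne y''$, then inside $C_{y'}$ the only available off-cycle vertex of $V(C)\cup\{u\}$ is $y'$, and a ``two-edge rotation at $y$'' would have to replace the path $y^-yy^+$ by $y^-y'y^+$; but only $y^-y'$ and $y^+y''$ are guaranteed edges, $y^+y'$ need not be one, and $y''$ is still on the cycle, so no single vertex is available to be swapped in for $y$. Even in the case $y'=y''$ you need to know that $y^-,y,y^+$ remain consecutive on $C_{y'}$; property (b) only controls consecutiveness on the original $C$, not on the cycles produced by earlier hops, so an additional invariant recording which arcs of $C$ the hops preserve must be carried through the induction. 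This is precisely where Woodall's proof becomes delicate (he proves (a), (b), (c) and the cycle-existence claim by a simultaneous induction with a stronger hypothesis), and your sketch does not supply it. Finally, your treatment of (d) is more complicated than necessary: once (c) is available, (d) follows formally from the definition of $X_{i+1}$ as above, with no further cycle surgery --- and that formal deduction is the only part the paper actually proves.
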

\begin{proof}
Parts (a), (b), (c) were proved by Woodall \cite{Wood}, and part (d) follows from part (c). Indeed, assume otherwise that there are two vertices $x,y\in Y$ so that $xy\in E(G)$. Since $Y=\bigcup_{i=1}^{\infty}Y_i$, we have $x\in Y_i$ and $y\in Y_j$, for some positive integers $i,j$. Then by definition, we have $x\in X_{i+1}$ and $y\in X_{j+1}$, this contradicts part (c).
\end{proof}

We note that in the Woodall's Hopping Lemma \ref{hopping}, $X_1\subseteq X_2 \subseteq\dotsc \subseteq X$ and $Y_1\subseteq Y_2 \subseteq\dotsc \subseteq Y.$

\section{Step 1: Prove $c(G)\not=2NC2(G)+3$}

Assume otherwise that $c(G)=| V(C) |=2NC2(G)+3$. By Lemma $\ref{l3}$, we have
\[|V(C)|\geq | B\cup B^+|= | B| +| B^+|=2| B|.\]
 Moreover, $| B| =| N(u)\cup N(v)|
\geq NC2(G)$, so $| B|$ must be $NC2(G)+1$ or $NC2(G)$.

Note that a $k$-interval contains exactly $k-2$ vertices that are not in $B\cup B^+$. In particular, the $k$-interval $b_i\overrightarrow{C}b_{i+1}$ has $k-2$ vertices $b_i^{+2}$, $b_i^{+3},\dots,b_{i}^{+(k-1)}$ that are not in $B\cup B^+$.

If $| B |= NC2(G)+1$, then $| V(C)| =2| B|+1$. Thus, there is only one vertex in $V(C)- (B\cup B^+)$, say $x$. Clearly, $x$ must be in a $3$-interval ($x^{-2}\overrightarrow{C}x^+$), and all other intervals have length 2. By Corollary \ref{rmk3}, the intervals are pairwise inner-disconnected. Then we have a contradiction from Lemma $\ref{l7}$.

Therefore, we have $| B | = NC2(G)$, and  thus $| V(C) | =2| B |+3=|B\cup B^+|+3$. It means that there are exactly 3 vertices of the cycle $\overrightarrow{C}$ that are not in $B \cup B^+$. We have 3 possibilities for the arrangement of these 3 vertices on the cycle $\overrightarrow{C}$ as follows.
\begin{enumerate}
\item[\textit{I.}] They are in the same interval.

\item[\textit{II.}] They are in two different intervals.

\item[\textit{III.}] They are in three different intervals.
\end{enumerate}

For the sake of contradiction, we will show that all three cases above do not happen.

\subsection{Case I}
Before investigating this case, we present several lemmas stated below.

\begin{lemma}\label{claim1}If $x$ is a vertex on $\overrightarrow{C}$ such that $x^-x^+\in E(G)$ and $x^{+2} \in B$, then $x$ is not adjacent
to any vertex $y \in B^--\{x^-,x,x^+\}$. Analogously, if $x^-x^+\in E(G)$ and $x^{-2} \in B$, then $x$ is not adjacent to any vertex $y \in B^+-\{x^-,x,x^+\}$
\end{lemma}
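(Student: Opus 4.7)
The plan is a proof by contradiction. Suppose $xy \in E(G)$ for some $y \in B^- \setminus \{x^-, x, x^+\}$. Since $y \in B^-$ its successor $y^+$ lies in $B = N(u) \cup N(v)$, and by hypothesis $x^{+2} \in B$ too. In every sub-case I will exhibit a cycle of length $|V(C)|+1$ on the vertex set $V(C) \cup \{u\}$, which contradicts the maximality of $C$. I split the sub-cases according to which of $N(u), N(v)$ contain $x^{+2}$ and which contain $y^+$.

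The cleanest sub-case is $x^{+2}, y^+ \in N(u)$. Here I would take
\[
 C' := u\, x^{+2}\, \overrightarrow{C}\, y\, x\, x^+\, x^-\, \overleftarrow{C}\, y^+\, u.
\]
The edges along $C'$, in order, are $u x^{+2}$ (sub-case hypothesis), the $C$-arc $x^{+2}\overrightarrow{C}y$, the chord $yx$ (the edge assumed for contradiction), the $C$-edge $xx^+$, the chord $x^+x^-$ (lemma hypothesis), the reversed $C$-arc $x^-\overleftarrow{C}y^+$, and $y^+u$ (sub-case hypothesis). The three blocks $\{x^{+2},\dots,y\}$, $\{x^-, x, x^+\}$ and $\{y^+,\dots,x^{-2}\}$ partition $V(C)$, so $V(C') = V(C) \cup \{u\}$ and $|V(C')| = |V(C)| + 1$.

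The symmetric sub-case $x^{+2}, y^+ \in N(v)$ reduces to the previous one by the interchanging-roles trick (Remark \ref{trick2}), which yields a setup in which $v$ plays the role of the off-cycle vertex while $B$ is preserved. In the remaining ``mixed'' sub-case, say $x^{+2} \in N(u)$ and $y^+ \in N(v) \setminus N(u)$ (its mirror follows by the same trick), the new cycle must detour through $v$ as well. Splitting on whether $y$ lies on $v^+ \overrightarrow{C} x^{-2}$ or on $x^{+2} \overrightarrow{C} v^-$, I would take, respectively,
\[
 u\, v^+\, \overrightarrow{C}\, y\, x\, x^+\, x^-\, \overleftarrow{C}\, y^+\, v\, v^-\, \overleftarrow{C}\, x^{+2}\, u
\]
or
\[
 u\, v^-\, \overleftarrow{C}\, y^+\, v\, v^+\, \overrightarrow{C}\, x^-\, x^+\, x\, y\, \overleftarrow{C}\, x^{+2}\, u,
\]
each of which, after checking that the listed arcs together with $\{x^-,x,x^+,v\}$ partition $V(C)$, again yields a cycle of length $|V(C)|+1$.

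The second sentence of the lemma (hypothesis $x^{-2} \in B$, conclusion on $B^+$) follows at once from the first by the reversing-orientation trick (Remark \ref{trick1}), which exchanges $B^+$ with $B^-$ and $x^{+i}$ with $x^{-i}$. The main technical point will be the mixed sub-case: one must verify that, for every admissible position of $y$, the vertices along the proposed $C'$ are genuinely distinct and no arc degenerates. The potentially borderline configurations can be absorbed into the clean cases: $y^+ \in \{v^+, v^-\}$ automatically gives $y^+ \in N(u)$ and puts us in the principal sub-case, while $y = x^{\pm 2}$ is ruled out by the fact (Lemma \ref{l3}) that $B \cap B^+ = B \cap B^- = \emptyset$ together with $x^{+2}, y^+ \in B$.
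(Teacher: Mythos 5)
Your proof is correct, but it reaches the contradiction by a different (more self-contained) route than the paper. The paper disposes of the lemma in two lines: depending on whether $y$ lies on $x^{+2}\overrightarrow{C}v^{-2}$ or on $v^{+2}\overrightarrow{C}x^{-2}$, the path $v^+\overrightarrow{C}x^-x^+xy\overleftarrow{C}x^{+2}$ or $v^+\overrightarrow{C}yxx^+x^-\overleftarrow{C}y^+$ is a \emph{bad path}, and Proposition~\ref{alpha} already contains the four-way case analysis (adjacency of the two relevant endpoints in $B$ to $u$ or to $v$) that converts a bad path into a cycle longer than $C$. You perform that conversion by hand: your cycle $u\,x^{+2}\,\overrightarrow{C}\,y\,x\,x^+\,x^-\,\overleftarrow{C}\,y^+\,u$ in the sub-case $x^{+2},y^+\in N(u)$ is exactly what comes out of feeding the paper's bad path into case (1) of Proposition~\ref{alpha}, and your mixed-case cycles correspond to cases (3)--(4). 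So the underlying surgery is the same; you lose brevity and gain independence from the bad-path formalism. Two small slips in your borderline discussion, neither fatal: (i) $y^+=v^+$ (i.e.\ $y=v$) does not by itself land you in the principal sub-case unless $x^{+2}\in N(u)$ as well --- but $y=v$ is in fact impossible, since $xv\in E(G)$ would force $x\in N(v)\subseteq B$, whence $x^-,x^+\in B^-$ and Corollary~\ref{l5c} contradicts $x^-x^+\in E(G)$ (note the paper's own positional split also silently omits $y=v$); (ii) $y=x^{-2}$ is \emph{not} excluded by $B\cap B^-=\emptyset$ (that only kills $y=x^{+2}$), though your principal construction degenerates gracefully there, the backward arc reducing to the single vertex $y^+=x^-$.
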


\begin{figure}\centering
\includegraphics[width=0.75\textwidth]{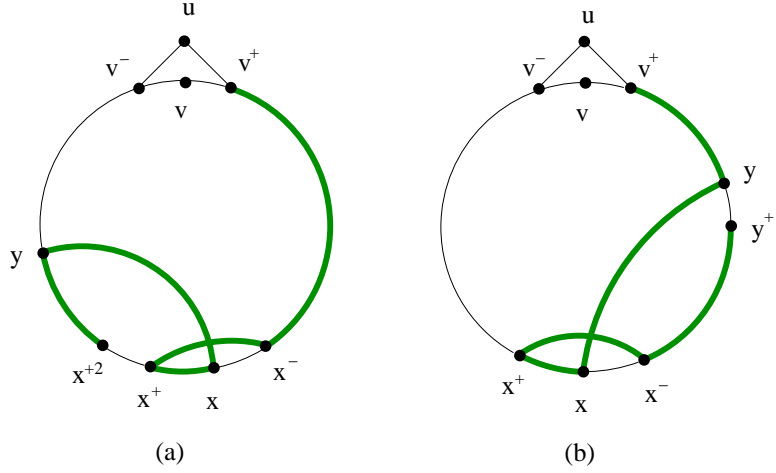}
\caption{Illustrating the proof of Lemma \ref{claim1}}
\label{Fig2}
\end{figure}

\begin{proof}
Note that $v^+$ and $v^-$ are not in $ B^-\cup B^+$, so if $y\in B^-\cup B^{+}$ then $y\notin \{v^-,v^+\}$. We only prove the first statement  (then the second statement follows by reversing the orientation of $\overrightarrow{C}$ as in Remark \ref{trick1}).

Suppose otherwise that $x^-x^+ \in E(G)$ and $x$ is
adjacent to some vertex $y \in B^--\{x^-,x,x^+\}$.  If $y\in x^{+2}\overrightarrow{C}v^{-2}$, then $v^+\overrightarrow{C}x^-x^+xy\overleftarrow{C}x^{+2}$ is a bad path (see Figure \ref{Fig2}(a)). If $y\in v^{+2}\overrightarrow{C}x^{-2}$, then $v^+\overrightarrow{C}yxx^+x^-\overleftarrow{C}y^+$ is a bad path (illustrated in Figure \ref{Fig2}(b)).
\end{proof}

We have a variant of Lemma \ref{claim1}.
\begin{lemma}\label{claim2} Assume that $x,y$ are two distinct vertices on $\overrightarrow{C}$ so that $x^{+}=b_i$ and $y^{-}=b_j$, for some $1\leq j<i\leq m$, and $xy\in E(G)$. Then $x^{-}$ and $y^{+}$ are not adjacent to any inner vertices of 2-intervals on the $C$-path $x^{+2}\overrightarrow{C}y^{-2}$.
\end{lemma}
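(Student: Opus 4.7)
The strategy is to argue by contradiction: assume there is an edge $x^-w\in E(G)$ with $w=b_k^+=b_{k+1}^-$ an inner vertex of some 2-interval $b_k\overrightarrow{C}b_{k+1}$ lying on the $C$-path $x^{+2}\overrightarrow{C}y^{-2}$, and build a cycle $C'$ on $V(C)\cup\{u\}$ of length $|V(C)|+1$, which contradicts the maximality of $C$. The parallel assertion for $y^+$ then follows by the same argument applied to $\overleftarrow{C}$ with the labels $x\leftrightarrow y$ interchanged; this operation preserves the hypothesis form of the lemma and exchanges the roles of $x^-$ and $y^+$, so one proof covers both halves.

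Since $j<i$, the $C$-path $x^{+2}\overrightarrow{C}y^{-2}$ wraps through $v$, and the 2-interval in question lies either in the arc $b_i\overrightarrow{C}v^-$ (Case~A: $i+1\leq k\leq m-1$) or in $v^+\overrightarrow{C}b_j$ (Case~B: $1\leq k\leq j-2$). The reversing-orientation trick (Remark~\ref{trick1}) together with the interchange $x\leftrightarrow y$ sends Case~B to Case~A, so it is enough to treat Case~A. The guiding idea is to traverse the arc $y\overrightarrow{C}b_k$ in a rearranged order that uses both chords: after $b_k\overleftarrow{C}b_i$ we take the $C$-edge $b_i\to x$, the chord $xy$, the forward arc $y\overrightarrow{C}x^-$, and the chord $x^-w$. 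Pasting this onto the arcs $v\overrightarrow{C}b_j$ and $b_{k+1}\overrightarrow{C}v^-$ at the two ends, and inserting $u$ as a bridge, produces a cycle on $V(C)\cup\{u\}$.

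The principal technical burden is the case split based on which of $N(u),N(v)$ contains each of $b_j,b_k$ (each is in at least one, since $B=N(u)\cup N(v)$). I plan four routings that differ only in how $u$ is spliced in and where $v$ serves as a bridge: when $b_j,b_k\in N(u)$, insert $b_j\to u\to b_k$ and close by the $C$-edge $v^-\to v$; when $b_j,b_k\in N(v)$, open with $v\to b_k$, splice in $u$ via $v^-\to u\to v^+$, and close with $b_j\to v$; the two mixed sub-cases use $u$ on one side and $v$ on the other. In each sub-case the resulting $C'$ visits every vertex of $C$ exactly once and also $u$, giving $|C'|=|V(C)|+1$.

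A few degenerate configurations deserve separate comment. If $b_j\overrightarrow{C}b_{j+1}$ and $b_{i-1}\overrightarrow{C}b_i$ coincide and have length~$3$, then $x^-=y$ and $y^+=x$, so $xy$ is actually a $C$-edge and the hypothetical chord $x^-w$ would connect two vertices of $B^+$, contradicting Corollary~\ref{l5c}. If the same coincidence occurs with length~$4$, then $y^+=x^-$ and the subpath $y\overrightarrow{C}x^-$ shrinks to a single edge. The boundary case $k+1=m$ reduces the trailing arc $b_{k+1}\overrightarrow{C}v^-$ to the single vertex $v^-$, and the analogous case $j=1$ reduces the initial arc $v^+\overrightarrow{C}b_j$ similarly. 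In each such situation the main construction goes through verbatim.
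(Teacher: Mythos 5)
Your approach is genuinely different from the paper's. The paper disposes of this lemma in a few lines with Woodall's Hopping Lemma: the hypothetical neighbour $a\in B^+\cap B^-$ gives $a\in Y_2$, hence $x^-\in X_3$, hence $x\in Y_3$, hence $y\in X_4\subseteq X$, while $y^-=b_j\in X_2\subseteq X$, contradicting part (b) of Lemma \ref{hopping}; this argument is uniform in the position of the 2-interval. You instead build an explicit cycle on $V(C)\cup\{u\}$. Your Case A construction is correct: deleting the three $C$-edges $y^-y$, $x^-x$, $b_kb_k^+$ and reconnecting via the chords $xy$, $x^-w$ and a bridge $b_j\,u\,b_k$ (or the stated variants routed through $v$) does produce a cycle through all of $V(C)\cup\{u\}$ in each of the four adjacency sub-cases, which contradicts the maximality of $C$.

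There is, however, a genuine gap in the symmetry bookkeeping. Four sub-statements must be proved: the assertion about $x^-$ and the assertion about $y^+$, each for $w$ in $b_i\overrightarrow{C}v^-$ (your Case A) and for $w$ in $v^+\overrightarrow{C}b_j$ (your Case B). The only symmetry available is orientation reversal, and it necessarily carries $x\mapsto y$ and $y\mapsto x$ (since the roles are fixed by the conditions $x^+=b_i$, $y^-=b_j$). Under this single involution, $(x^-,\mathrm{A})$ is exchanged with $(y^+,\mathrm{B})$, and $(x^-,\mathrm{B})$ with $(y^+,\mathrm{A})$. You invoke the involution twice --- once to pass from $x^-$ to $y^+$, once to send Case B to Case A --- but it is the same map, so after proving $(x^-,\mathrm{A})$ directly you have only obtained $(y^+,\mathrm{B})$; the orbit $\{(x^-,\mathrm{B}),\,(y^+,\mathrm{A})\}$ is never addressed and does not follow from what you proved. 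It is fixable: when $w$ lies between $v^+$ and $b_j$ the same three $C$-edges are deleted and the same reconnection pattern again yields a single spanning cycle, but the arcs to be pasted are different (for instance $v^+\overrightarrow{C}b_k$, $b_{k+1}\overrightarrow{C}b_j$ and $b_i\overrightarrow{C}v^-$ rather than the two arcs you name), so this routing must be written out. Two smaller omissions: your ranges for $k$ exclude the boundary positions $w=x^{+2}$ (when $b_i\overrightarrow{C}b_{i+1}$ is itself a 2-interval) and $w=y^{-2}$, as well as the 2-interval $v^-vv^+$ with inner vertex $v$; all of these inner vertices legitimately lie on $x^{+2}\overrightarrow{C}y^{-2}$ and are covered without extra work by the Hopping Lemma argument.
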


\begin{proof}
We only need to prove the statement for the vertex $x^{-}$, then the statement for $y^{+}$ follows naturally from reversing orientation trick.

Suppose otherwise that there is $x$ is adjacent to some vertex $a\in B^{+}\cap B^{-}$ on $x^{+2}\overrightarrow{C}y^{-2}$. Apply Hopping Lemma \ref{hopping} to the graph $G$ with cycle $\overrightarrow{C}$ and vertex $u$ as in our setup. We have
 \begin{subequations}
 \begin{align}\label{hoppingeq}
 X_1&=N(u),\\
 Y_1&=N(u)^+\cap N(u)^-\ni v,\\
 X_2&=N(Y_1\cup \{u\})\supseteq B,\\
 Y_2&\supseteq B^+\cap B^-.
 \end{align}
 \end{subequations}
 In particular, $a\in Y_2$, so $x^{-}\in N(a)\subseteq N(Y_2\cup\{u\})=X_3$. By definition, $x\in Y_3$, so $y\in N(x)\subseteq N(Y_3\cup\{u\})=X_4\subseteq X$. However, we already have $y^-\in B\subseteq X_2\subseteq X$, this contradicts the part (b) of the Hopping Lemma \ref{hopping}.
%
\end{proof}

\begin{lemma} \label{claim3}  Assume that $x$ is a vertex on $\overrightarrow{C}$ such that $x$ and $x^+$ are on $ v^{+2}\overrightarrow{C}v^{-2}$. Then there are no two vertices $a$ and $b$ in
$(B^+\cap B^-)-\{x,x^+\}$ such that $xa$ and $x^+b \in E(G)$, where $a$ and $b$ are not necessarily distinct.
\end{lemma}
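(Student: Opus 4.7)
The plan is to apply Woodall's Hopping Lemma (Lemma~\ref{hopping}) to $\overrightarrow{C}$ and $u$, in the same spirit as the proof of Lemma~\ref{claim2}. Because $C$ is a longest cycle, Lemma~\ref{l1} tells us $C$ is dominating; in particular $V(G-C)$ is independent, so $u$ is an isolated vertex of $G - V(C)$, which verifies the isolation hypothesis needed to invoke the Hopping Lemma.

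First I would reproduce the short chain used in the proof of Lemma~\ref{claim2}: since $v^+, v^- \in N(u) = X_1$, we have $v \in Y_1$, hence $B = N(u) \cup N(v) \subseteq N(Y_1 \cup \{u\}) = X_2$, and therefore $B^+ \cap B^- \subseteq Y_2$. Now suppose for contradiction that $a, b \in (B^+ \cap B^-) - \{x, x^+\}$ satisfy $xa, x^+b \in E(G)$. Then $a, b \in Y_2$, and the exclusions $a \neq x$ and $b \neq x^+$ make the edges $xa$ and $x^+b$ genuine neighbor relations, yielding $x \in N(a) \subseteq N(Y_2 \cup \{u\}) = X_3$ and, symmetrically, $x^+ \in X_3$. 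In particular both $x$ and $x^+$ lie in $X$.

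To finish I would invoke part~(b) of the Hopping Lemma, which forbids distinct elements $x_1, x_2 \in X$ from satisfying $x_1^+ = x_2$. Taking $x_1 = x$ and $x_2 = x^+$ (distinct because $x$ and $x^+$ are consecutive, hence different, vertices of $\overrightarrow{C}$) produces exactly such a forbidden pair, giving the required contradiction.

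There is no genuine obstacle here: the statement is essentially a direct corollary of the Hopping Lemma, with the excluded set $\{x, x^+\}$ playing only the role of ensuring the edges $xa, x^+b$ are non-degenerate so that they really do push $x$ and $x^+$ into $X_3$. The one bookkeeping point is to make sure that the longest cycle $\overrightarrow{C}$ in setup \textbf{(S)} may be chosen to minimise $\omega(G - V(C))$ among all longest cycles, as required by the Hopping Lemma hypothesis; this is the same implicit convention already used in the proof of Lemma~\ref{claim2}.
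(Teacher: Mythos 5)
Your proposal is correct and follows essentially the same route as the paper's own proof: invoke the Hopping Lemma with the chain $v\in Y_1$, $B\subseteq X_2$, $B^+\cap B^-\subseteq Y_2$, conclude $x,x^+\in X_3\subseteq X$, and contradict part (b). The extra care you take about the isolation of $u$ and the non-degeneracy of the edges is sensible bookkeeping but does not change the argument.
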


\begin{proof}
 Suppose otherwise that $xa$ and $x^+b\in E(G)$, for some vertices $a$ and $b$ in $B^+\cap B^-$.  Apply Hopping Lemma \ref{hopping} to the graph $G$.  Similar to Lemma \ref{claim2}, we have (\ref{hoppingeq})--(12d), and $a,b\in Y_2$. Thus by definition, $x,x^+\in N(a)\cup N(b)\subseteq N(Y_2\cup\{u\})=X_3\subseteq X$, contradicting the  part (b) of the Hopping Lemma \ref{hopping}.
 \end{proof}

\medskip
Next, we show that Case I does not happen by contradiction. Suppose otherwise that three vertices of $V(C)-B\cup B^+$ stay in the same interval. Then they are $x_0^{-}$, $x_0$ and $x_0^{+}$, for some vertex $x_0\in V(C)$.
Arguing similarly to the case when $|B|= NC2(G)+1$ in the beginning of Section 3, the cycle $\overrightarrow{C}$ contains one 5-interval and all remaining  intervals have length 2. One readily sees that the 5-interval is $x_0^{-3}\overrightarrow{C}x_0^{+2}=b_{i}\overrightarrow{C}b_{i+1}$, for some $1\leq i\leq m-1$.   From Corollary
\ref{l5c},  $x_0^{-2}$ and $x_0^{+}$ are not adjacent to any inner vertices of 2-intervals.

By Lemma \ref{claim2}, if $x_0^{-2}x_0^{+}\in E(G)$, then $x_0^{-}$ and $x_0$ are not adjacent to any inner vertices of 2-intervals. This implies that the 5-interval $b_i\overrightarrow{C}b_{i+1}$ does not inner-connect to any 2-intervals.  Thus, all the intervals are pairwise inner-disconnected (any two 2-intervals are inner-disconnected), contradicting Lemma \ref{l7}.

Moreover, from Lemma \ref{claim1}, if $x_0^{-2}x_0$ and $x_0^{-}x_0^{+}\in E(G)$, then we have also $x_0^{-}$ and $x_0$ are not adjacent to any inner vertices of 2-intervals, and we get the same contradiction as in the previous paragraph.

Finally, if exactly one of $x_{0}^{-2}x_0$ and $x_0^{-}x_0^{+}$ is in $E(G)$, say $x_{0}^{-2}x_0\in E(G)$ and $x_0^{-}x_0^{+}\notin E(G)$, then $G$ is not 1-tough, a contradiction to our setup \textbf{(S)}.  Indeed, we consider the graph $G-(B\cup \{x_0\})$. Arguing similarly to the proof of Lemma \ref{l7}, the vertices of the set
 \[\{x_0^{-2},x_0^{+},u\}\cup (B^+\cap B^-)\]
 are in distinct components of $G-(B\cup \{x_0\})$. Thus, $G-(B\cup \{x_0\})$ has at least $|B|+2$ components, so $G$ is not 1-tough.

From the contradictions in the three previous paragraphs,  all $x_0^{-2}x_0^{+}$, $x_{0}^{-2}x_0$ and $x_0^{-}x_0^{+}$ are not in $E(G)$. Then at least one of $x_0^{-}$ and $x_0$ is adjacent to an inner vertex of some 2-interval (otherwise, all intervals are pairwise inner-disconnected, then we have a contradiction to Lemma \ref{l7}). On the other hand, by  Lemma \ref{claim3}, exact one of two vertices $x_0^{-}$ and $x_0$ is adjacent to the inner vertex of some 2-interval, say $x_0$. Again, we have the graph $G-(B\cup\{x_0\})$ has at least $|B|+2$ components, so $G$ is not 1-tough, a contradiction. This implies that Case I does not happen.

\subsection{Case II}

We present several supporting lemmas before investigating Case II.

\begin{lemma}\label{l9}  Assume that $a=b_i^+$ and $c=b_j^-$ are two distinct vertices, for some $1\leq i<j\leq m$, so that $ac\in E(G)$.

(1) If there exists a vertex $x\in \{b_{i+1}^-,\dotsc,b_{j-1}^-\}$,  then both $xa^-$ and $xc^+$ are not in $E(G)$. Analogously, we have the same conclusion if there exists a vertex $x\in\{b_{i+1}^+,\dotsc,b_{j-1}^+\}$.

(2) If there exists a vertex $x\in a^+\overrightarrow{C}b^-$ such that $xv^{+2}\in E(G)$, then both $x^+v^+$  and $x^-v^+$ are not in $E(G)$. Analogously, if $xv^{-2} \in E(G)$, then both $x^+v^-$ and $x^-v^-$  are not in $E(G)$.
\end{lemma}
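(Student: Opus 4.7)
The strategy is uniform across all four conclusions: suppose for contradiction that the forbidden edge exists, glue it together with the edge $ac$ and suitable arcs of $\overrightarrow{C}$ into a simple path, verify that this path is ``bad'' in the sense of the definition preceding Proposition \ref{alpha}, and invoke that proposition for a contradiction.

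For part (1), take first $x=b_k^-$ with $i<k<j$ and the putative edge $xa^-=xb_i\in E(G)$. The candidate bad path of form (i) is
\[
v^+\overrightarrow{C}b_i \;\cdot\; b_i x \;\cdot\; x\overleftarrow{C}a \;\cdot\; ac \;\cdot\; c\overleftarrow{C}x^+,
\]
i.e., advance from $v^+$ to $b_i$, hop to $x=b_k^-$ along $b_ix$, retreat to $a=b_i^+$, hop to $c=b_j^-$ along $ac$, and finally retreat to $x^+=b_k$. The three $\overrightarrow{C}$-arcs $v^+\overrightarrow{C}b_i$, $a\overrightarrow{C}x$, and $b_k\overrightarrow{C}c$ are pairwise disjoint with union exactly $v^+\overrightarrow{C}b_j^-$, so we obtain a simple path covering $v^+\overrightarrow{C}b_j^-$ whose endpoints are $v^+$ and $b_k\in B$ with $k<j$---precisely a bad path of form (i). The sister case $xc^+=xb_j\in E(G)$ is handled by the mirror construction starting at $v^-$, yielding a bad path of form (ii) with endpoints $v^-$ and $b_k$, $k>i$. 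When instead $x=b_k^+$ with $i<k<j$, the same template works after replacing the two backward retreats (from $x$ and from $c$) by the corresponding forward advances; the endpoints land in $B$ as before.

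For part (2), assume $x\in a^+\overrightarrow{C}c^-$ with $xv^{+2}\in E(G)$, and, for contradiction, suppose $x^+v^+\in E(G)$. The bad path of form (i) is
\[
v^+ x^+ \;\cdot\; x^+\overrightarrow{C}c \;\cdot\; ca \;\cdot\; a\overrightarrow{C}x \;\cdot\; x v^{+2} \;\cdot\; v^{+2}\overrightarrow{C}b_i,
\]
with endpoints $v^+$ and $b_i=a^-$ and vertex set exactly $v^+\overrightarrow{C}b_j^-$. The sibling case $x^-v^+\in E(G)$ is settled by the analogous rearrangement using $x^-\overleftarrow{C}a$ and $c\overleftarrow{C}x$ in place of the corresponding forward arcs. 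Finally, the second half of part (2), involving $xv^{-2}$, is immediate from the first by the reversing orientation trick of Remark \ref{trick1}, since reversing the orientation of $\overrightarrow{C}$ swaps the roles of $v^+$ and $v^-$ while preserving the rest of the setup.

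The only delicate point in each construction is the bookkeeping check that the listed arcs are pairwise disjoint (so the concatenation is a genuine simple path) and that their union is exactly $v^+\overrightarrow{C}b_j^-$ for form (i) or $a\overrightarrow{C}v^-$ for form (ii); this follows directly from $1\le i<k<j\le m$ together with the defining identities $a=b_i^+$ and $c=b_j^-$. No further machinery beyond Proposition \ref{alpha} and Remark \ref{trick1} is needed.
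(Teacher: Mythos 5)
Your proposal is correct and follows essentially the same route as the paper: in every case you build the same (or mirror-image) concatenation of the edge $ac$, the putative forbidden edge, and arcs of $\overrightarrow{C}$ into a bad path of form (i) or (ii), and then invoke Proposition \ref{alpha}; your explicit paths for $xa^-\in E(G)$ and for $x^{\pm}v^+\in E(G)$ coincide with the ones in the paper's Figures \ref{Fig6} and \ref{Fig7}. The only cosmetic difference is that for $x\in\{b_{i+1}^+,\dotsc,b_{j-1}^+\}$ you give a direct forward-arc construction where the paper appeals to the reversing orientation trick of Remark \ref{trick1}; both are fine.
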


\begin{proof}

\begin{figure}\centering
\includegraphics[width=0.75\textwidth]{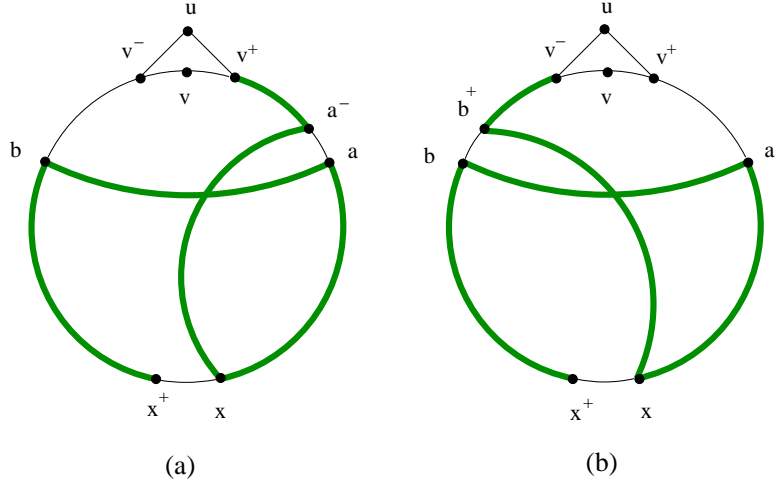}
\caption{Illustrating the proof of Lemma \ref{l9}(1).}
\label{Fig6}
\end{figure}

(1) Assume otherwise that there is a vertex  $x\in \{b_{i+1}^-,\dotsc,b_{j-1}^-\}$ so that $xa^-\in E(G)$ or $xc^+\in E(G)$.

If $xa^- \in E(G)$, then $v^+\overrightarrow{C}a^-x\overleftarrow{C}ac\overleftarrow{C}x^+$ is a bad path (see Figure \ref{Fig6}(a)), which contradicts Proposition \ref{alpha}. If $xc^+\in E(G)$, then $v^-\overleftarrow{C}c^+x\overleftarrow{C}ac\overleftarrow{C}x^+$ is also a bad path (shown in Figure \ref{Fig6}(b)), again from Proposition \ref{alpha} we have a contradiction. This finishes the proof of the first statement.

The case of $x\in \{b_{i+1}^+,\dotsc,b_{j-1}^+\}$ is obtained similarly by reversing orientation trick in Remark \ref{trick1}.
\begin{figure}\centering
\includegraphics[width=0.75\textwidth]{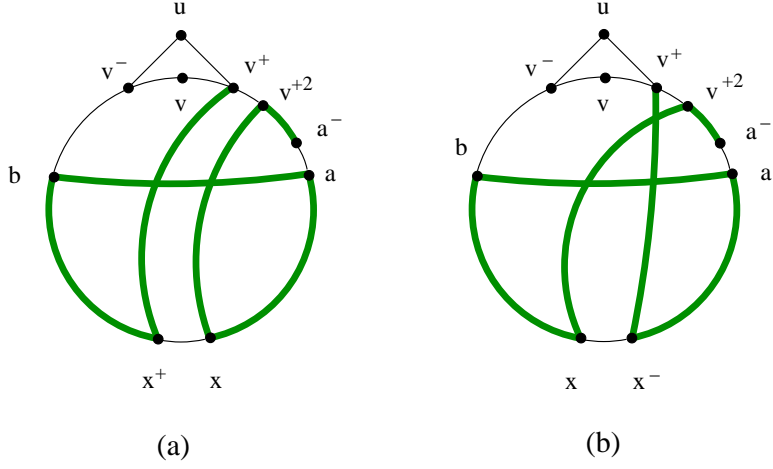}
\caption{Illustrating the proof of Lemma \ref{l9}(2).}
\label{Fig7}
\end{figure}

(2) We only prove the first statement, the second one follows from reversing orientation trick.

Assume otherwise that there is a vertex $x\in a^+\overrightarrow{C}c^-$ so that  $xv^{+2}\in E(G)$.
If $x^+v^+\in E(G)$, then $v^+x^+\overrightarrow{C}ca\overrightarrow{C}xv^{+2}\overrightarrow{C}a^-$ is a bad path (illustrated by Figure \ref{Fig7}(a)). If $x^-v^+\in E(G)$, then $v^+x^-\overleftarrow{C}ac\overleftarrow{C}xv^{+2}\overrightarrow{C}a^-$ is a bad path (shown in Figure \ref{Fig7}(b)). Thus, by Proposition \ref{alpha} again, we have a contradiction. Then the first statement follows.
\end{proof}

\begin{lemma} \label{l10}
Assume that $a=b_i^+$ and $c=b_j^-$ are two distinct vertices, for some $1\leq i<j\leq m$, so that $ac\in E(G)$. If there exists a vertex $x\in N(u)\cap N(v)$ on the $C$-path $a^+\overrightarrow{C}c^-$, then all $x^+v^+$, $x^+v^-$, $x^-v^+$ and $x^-v^-$ are not in $E(G)$.
%
\end{lemma}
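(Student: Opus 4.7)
The plan is to assume, for contradiction, that one of the four edges $x^+v^+$, $x^+v^-$, $x^-v^+$, $x^-v^-$ lies in $E(G)$. The reversing orientation trick (Remark~\ref{trick1}) exchanges the pairs $(x^+v^+,x^-v^-)$ and $(x^+v^-,x^-v^+)$, so it suffices to rule out the two edges $x^+v^+$ and $x^+v^-$. The interchanging roles trick (Remark~\ref{trick2}) leaves both $v^+$ and $v^-$ fixed (they remain the successor and predecessor of the new $u$-vertex on $\widetilde{C}$) and therefore yields no further reduction here.

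For the case $x^+v^+\in E(G)$, the strategy is to derive a contradiction by constructing a cycle $C^\star$ in $G$ of length $|V(C)|+1$, contradicting the maximality of $C$. Beyond the cycle edges of $C$, the edges at our disposal are $uv^+$ and $uv^-$ (from $(S_2)$), $xu$ and $xv$ (since $x\in N(u)\cap N(v)$), the hypothesis edge $ac$, and the assumed edge $x^+v^+$. Since both $vx$ and $v^+x^+$ are edges, reversing the segment of $\overrightarrow{C}$ from $v^+$ to $x$ already yields a longest cycle $C'$ of the same length as $C$, in which $v$'s successor is $x$ and $v^+$'s successor is $x^+$ while $u$ remains off-cycle. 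Working with this rearranged cycle, the chord $ac$ together with the setup edges $uv^\pm$ and the hypothesis edges $xu$, $xv$ can then be combined to splice $u$ into $C'$, producing the desired cycle of length $|V(C)|+1$. The case $x^+v^-\in E(G)$ is handled by the mirror-image construction obtained by swapping the roles of $v^+$ and $v^-$ in the splicing.

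The main obstacle is the detailed verification that the spliced closed walk is a simple cycle covering $V(C)\cup\{u\}$ of the claimed length, which requires careful case analysis on which of the sub-arcs $v^+\overrightarrow{C}b_i$, $a^+\overrightarrow{C}x^-$, $x^+\overrightarrow{C}c^-$, and $b_j\overrightarrow{C}v^-$ are empty, and on how the resulting segments must be glued using the available non-cycle edges to avoid revisiting vertices. In sub-cases where the direct construction runs into closure difficulties, a potential fallback is Woodall's Hopping Lemma (Lemma~\ref{hopping}): since $v^\pm\in N(u)$ forces $v\in Y_1$, the edge $xv$ yields $x\in N(v)\subseteq X_2$; then chasing the iteration using the chord $ac$, the assumed edge $x^+v^+$, and the memberships $a,c\in B\subseteq X_2$ allows one to eventually place $x^+$ in $X$, whence part~(b) of Lemma~\ref{hopping} gives the contradiction $x^+\neq x^+$ in view of $x\in X$.
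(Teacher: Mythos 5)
Your overall strategy (contradict the maximality of $C$ by splicing $u$ into a rearranged cycle, after using the reversing-orientation trick to reduce to the edges $x^+v^+$ and $x^+v^-$) is the same as the paper's, and the reduction to two edges is correct. But the core of the argument is missing, and the inventory of usable edges you give is not sufficient to carry it out. You list only $uv^+$, $uv^-$, $xu$, $xv$, $ac$ and the assumed edge $x^+v^+$. With just these, no cycle through $V(C)\cup\{u\}$ exists in general: $u$ can only be inserted between two of $v^+$, $v^-$, $x$, and since $v\notin N(u)$ (by Corollary~\ref{l5c}, as $v\in B^+$) and $x^+\notin N(u)$ (as $x^+\in B^+$), none of the required Hamiltonian paths on $V(C)$ can be closed up using only the single chord $ac$ and $x^+v^+$; your ``rearranged cycle'' $C'$ likewise has no two consecutive vertices in $N(u)$. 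The ingredient you are missing is that $a^-=b_i$ and $c^+=b_j$ lie in $B=N(u)\cup N(v)$, so each of them is adjacent to $u$ or to $v$; the paper's proof splits into cases on exactly this (e.g.\ if $a^-u,c^+u\in E(G)$ it takes $C':=x^+v^+\overrightarrow{C}a^-uc^+\overrightarrow{C}vx\overleftarrow{C}ac\overleftarrow{C}x^+$), and the interchanging-roles trick -- which you dismiss as yielding ``no further reduction'' -- is used precisely to halve that case analysis. Without the edges $a^-w$, $c^+w'$ with $w,w'\in\{u,v\}$, the splicing you describe cannot be completed.

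Your Hopping Lemma fallback also does not work as sketched. First, $a=b_i^+$ and $c=b_j^-$ lie in $B^+$ and $B^-$ respectively, and $B\cap B^+=B\cap B^-=\emptyset$ by Lemma~\ref{l3}, so the claimed memberships $a,c\in B\subseteq X_2$ are false. Second, to place $x^+$ in $X$ via the edge $x^+v^+$ you would need $v^+\in Y$, which requires $v\in X$; but $v\in Y_1$ and $X\cap Y=\emptyset$ by part (c), so $v\notin X$ and that route is closed. The lemma genuinely needs the explicit longer-cycle constructions.
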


\begin{proof}

\begin{figure}\centering
\includegraphics[width=0.75\textwidth]{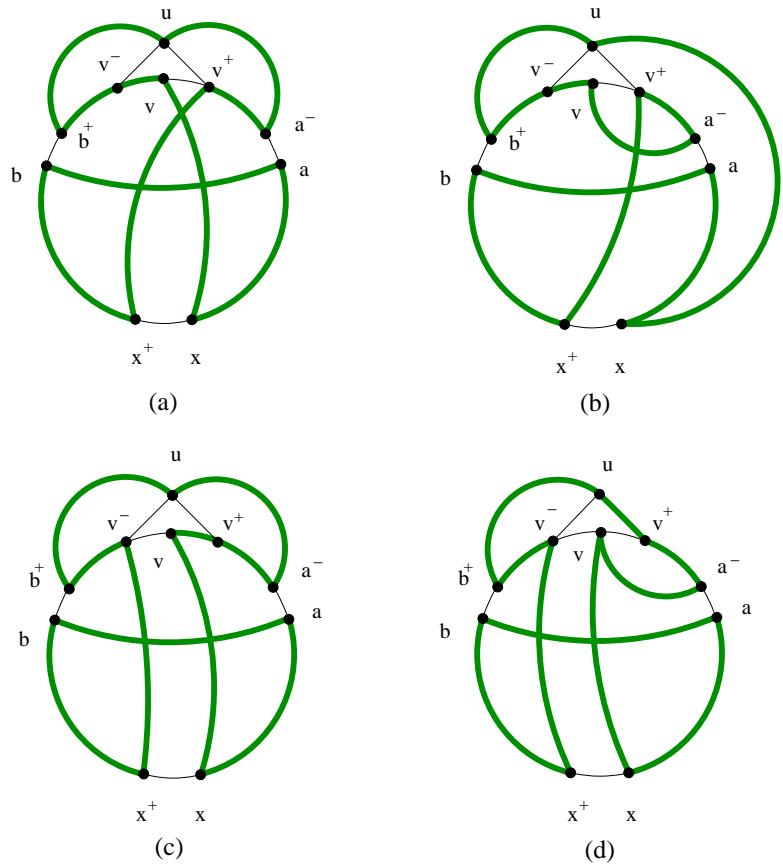}
\caption{Illustrating the proof of Lemma \ref{l10}.}
\label{Fig8}
\end{figure}

Let $x$ be a vertex in $N(u)\cap N(v)\cap a^+\overrightarrow{C}c^-$. If otherwise $x^+v^+\in E(G)$, we construct a cycle $C'$ longer than $C$ as follows.

If $a^-u$ and $c^+u\in E(G)$, then let
$C':=x^+v^+\overrightarrow{C}a^-uc^+\overrightarrow{C}vx\overleftarrow{C}ac\overleftarrow{C}x^+$ (see Figure \ref{Fig8}(a)). If $a^-v,\ c^+u \in E(G)$, then let
$C':=x^+v^+\overrightarrow{C}a^-v\overleftarrow{C}c^+ux\overleftarrow{C}ac\overleftarrow{C}x^+$ (shown in Figure \ref{Fig8}(b)). By interchanging roles trick in Remark \ref{trick2}, we get also a cycle longer than $C$  when $a^-$ and $c^+$ are adjacent to $v$, and when $a^-$ is adjacent to $u$ and $c^+$ is adjacent to $v$.

Thus, in all cases, we get a cycle longer than $C$, a contradiction. This implies that $x^+v^+\notin E(G)$.

If $x^+v^- \in E(G)$, we may construct a cycle $C''$ longer than $C$, then we have a contradiction. By the interchanging roles trick, we only need to consider two following cases. If $a^-u$ and $ c^+u\in E(G)$, then let
$C'':=x^+v^-\overleftarrow{C}c^+ua^-\overleftarrow{C}vx\overleftarrow{C}ac\overleftarrow{C}x^+$ (illustrated in Figure \ref{Fig8}(c)). If $a^-v$ and $c^+u \in E(G)$, then
$C'':=x^+v^-\overleftarrow{C}c^+uv^+\overrightarrow{C}a^-vx\overleftarrow{C}ac\overleftarrow{C}x^+$ (see Figure \ref{Fig8}(d)). This shows that $x^+v^- \notin E(G)$.

Finally, by reversing orientation trick in Remark \ref{trick1}, we get $x^-v^+, x^-v^-\notin E(G)$.
%
\end{proof}

\begin{lemma} \label{l12} Assume that $a=b_p^-$ and $c=b_q^+$ are two distinct vertices, for some $1\leq p<q\leq m$, so that $ac\in E(G)$. Then
\begin{enumerate}
\item[(1)] Either $b_p,b_q\in N(u)\setminus N(v)$ or $b_p,b_q\in N(v)\setminus N(u)$.
\item[(2)] All $b_pv^{-2}$, $b_pv^{+2}$, $b_qv^{-2}$ and $b_qv^{+2}$ are not in $E(G)$.
\end{enumerate}
\end{lemma}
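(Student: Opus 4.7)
The plan is to derive a contradiction with the maximality of $C$ in each forbidden configuration by exhibiting a cycle on $V(C)\cup\{u\}$, which therefore has length $|V(C)|+1$. Each such cycle is assembled by cutting $\overrightarrow{C}$ along the chord $ac$ (and, in part (2), along the hypothetical edge $b_p v^{\pm 2}$ or $b_q v^{\pm 2}$) and reattaching the pieces through $u$ via the edges $v^+u$ and $v^-u$ supplied by the setup. Two symmetries cut the work down to one canonical subcase per part: the reversing-orientation trick of Remark \ref{trick1}, which under the reindexing $\tilde{b}_i := b_{m+1-i}$ exchanges the roles of $p,q$ and the superscripts $+,-$, and the interchanging-roles trick of Remark \ref{trick2}, which exchanges $u$ and $v$.

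Before the main arguments, I would record what $ac\in E(G)$ already forces. Since $c=b_q^+\in B^+$ and $a=b_p^-\in B^-$, Corollary \ref{l5c} (independence of $B^+\cup V(G-C)$ and $B^-\cup V(G-C)$) rules out $a\in B^+$ and $c\in B^-$. In particular, $p\geq 2$ and $q\leq m-1$ (if $p=1$ then $a=v$, and $ac\in E(G)$ would place $c\in N(v)\subseteq B$, contradicting $B\cap B^+=\emptyset$; the case $q=m$ is symmetric). Likewise $a\neq v^{+2}$ and $c\neq v^{-2}$: indeed $a=v^{+2}$ would require the first interval to have length $2$ and $p=2$, but then $a=b_2^-=b_1^+\in B^+$, contradicting the previous observation. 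Combined with $v^{\pm 2}\notin B$, these facts guarantee that the arcs of $\overrightarrow{C}$ used below, namely $v^+\overrightarrow{C}a$, $c\overrightarrow{C}v^-$, $b_p\overrightarrow{C}b_q$, $v^{+2}\overrightarrow{C}a$, and $c\overrightarrow{C}v^{-2}$, are nondegenerate and pairwise vertex-disjoint.

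For part (1), suppose for contradiction that $b_p\in N(v)$ and $b_q\in N(u)$. Then
\[
vb_p\overrightarrow{C}b_quv^+\overrightarrow{C}ac\overrightarrow{C}v^-v
\]
is a cycle of length $|V(C)|+1$, contradicting the maximality of $C$. The mirror configuration $b_p\in N(u)$, $b_q\in N(v)$ is handled by applying exactly the same argument in the setup $(v,u,\overrightarrow{\widetilde{C}})$ furnished by Remark \ref{trick2}. Finally, if $b_p\in N(u)\cap N(v)$, then $b_p$ belongs simultaneously to $N(u)$ and to $N(v)$, and combining with $b_q\in N(u)\cup N(v)$ one of the two cases above applies; $b_q\in N(u)\cap N(v)$ is symmetric. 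Hence $b_p$ and $b_q$ lie in the same ``private'' neighbor set, as claimed.

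For part (2), by part (1) (and Remark \ref{trick2} if needed) I may assume $b_p,b_q\in N(u)\setminus N(v)$. If $b_p v^{+2}\in E(G)$, the cycle
\[
vv^+ub_q\overleftarrow{C}b_p v^{+2}\overrightarrow{C}ac\overrightarrow{C}v^-v
\]
visits every vertex of $V(C)\cup\{u\}$ exactly once, yielding the desired contradiction. If instead $b_p v^{-2}\in E(G)$, the cycle
\[
vv^+\overrightarrow{C}ac\overrightarrow{C}v^{-2}b_p\overrightarrow{C}b_quv^-v
\]
does the job. The remaining two cases $b_q v^{\pm 2}\in E(G)$ follow from the two just treated by applying Remark \ref{trick1} to reverse the orientation of $\overrightarrow{C}$: in the reversed setup the role of $b_p$ is played by $b_q$ and that of $v^{+2}$ by $v^{-2}$ (and vice versa), so the hypotheses transform correctly. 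The case $b_p,b_q\in N(v)\setminus N(u)$ is dispatched by a further invocation of Remark \ref{trick2}.

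The main obstacle is simply spotting the right reroutings; once written down, the verification reduces to confirming that every traversed edge is either on $\overrightarrow{C}$ or is one of the prescribed non-$C$ edges ($v^\pm u$, $b_p u$, $b_q u$, $ac$, $b_p v^{\pm 2}$), and that the four arcs of $\overrightarrow{C}$ spliced together are pairwise vertex-disjoint. The preliminary bounds $p\geq 2$, $q\leq m-1$, $a\neq v^{+2}$, $c\neq v^{-2}$ are precisely what is needed to secure the latter.
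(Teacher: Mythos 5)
Your proof is correct and follows essentially the same approach as the paper: in each forbidden configuration you build a cycle through $V(C)\cup\{u\}$ by splicing arcs of $\overrightarrow{C}$ along the chord $ac$ and the edges at $u$, then dispatch the remaining cases with the reversing-orientation and interchanging-roles tricks, exactly as the paper does. Your explicit cycles are routed slightly differently from the paper's (e.g.\ the paper uses $vb_p\overrightarrow{C}b_quv^-\overleftarrow{C}ca\overleftarrow{C}v$ for part (1)), but they are equally valid, and your preliminary checks that $p\geq 2$, $q\leq m-1$, $a\neq v^{+2}$, $c\neq v^{-2}$ are a welcome extra verification that the paper leaves implicit.
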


\begin{proof}

\begin{figure}\centering
\includegraphics[width=0.90\textwidth]{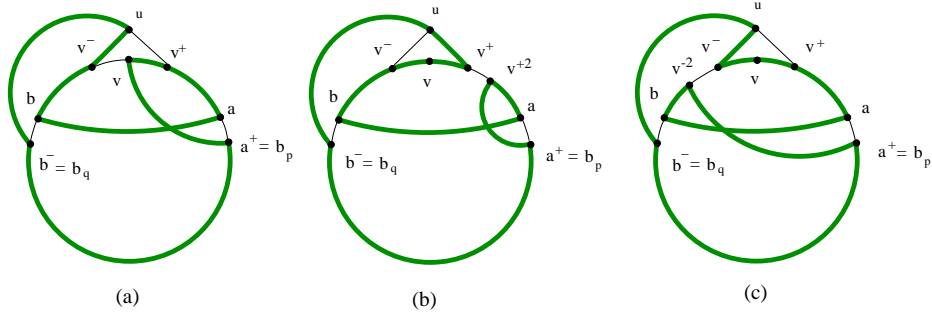}
\caption{Illustrating the proof of Lemma \ref{l12}.}
\label{Fig9}
\end{figure}

(1) Suppose otherwise that one of $b_p$ and $b_q$ is adjacent to $u$ and the other is adjacent to $v$. We only consider the case $b_pv,\ b_qu \in E(G)$ (the other case follows from interchanging roles trick). We have the cycle $C':= vb_p\overrightarrow{C}b_quv^-\overleftarrow{C}ca\overleftarrow{C}v$ is longer than $C$ (see  Figure \ref{Fig9}(a)), a contradiction, and the statement follows.

(2) By part (1), we have $b_pu$ and $b_qu$ are both in $E(G)$, or $b_pv$ and $b_qv$ are both in $E(G)$. Again, we only need to consider the first case (the latter case can be obtained by applying interchanging roles trick).

If $b_pv^{+2} \in E(G)$, the cycle $C'':= b_pv^{+2}\overrightarrow{C}ac\overrightarrow{C}v^+ub_q\overleftarrow{C}b_p$ is longer than $C$, a contradiction  (see Figure \ref{Fig9}(b)).
If $b_pv^{-2} \in E(G)$, the cycle $C'':= b_pv^{-2}\overleftarrow{C}ca\overleftarrow{C}v^-ub_q\overleftarrow{C}b_p$ is longer than $C$, contradicting the choice of $C$  (see Figure \ref{Fig9}(c)). This implies that $b_pv^{+2},\ b_pv^{-2} \notin E(G)$.

By reversing orientation of $\overrightarrow{C}$ as in Remark \ref{trick1},  we get also $b_qv^{+2},\ b_qv^{-2} \notin E(G)$.
\end{proof}

\medskip

Next, we show that Case II does not happen by contradiction. Assume otherwise that three vertices of $V(C)-B\cup B^+$ are falling into two different intervals. Then they are $x_0,$ $x_0^{+}$ and $y_0^{+}$, for some vertices $x_0$ and $ y_0$ on $\overrightarrow{C}$ so that $y_0\notin \{x_0^-,x_0^{+}\}$. Arguing similarly to Case I,
the cycle $\overrightarrow{C}$ consists of a 3-interval ($y_0^-\overrightarrow{C}y_0^{+2}=b_j\overrightarrow{C}b_{j+1}$), a 4-interval ($x_0^{-2}\overrightarrow{C}x_0^{+2}=b_i\overrightarrow{C}b_{i+1}$), and several 2-intervals. Without loss of generality, we assume that $1\leq i<j \leq m$ (the case $i>j$ is obtained from this case by reversing orientation trick). Note that in this case we have $V(C)-B=B^-\cup B^+ \cup\{x_0\}$, $B^--B^+=\{x_0^+,y_0^+\}$, and $B^+-B^-=\{x_0^-,y_0\}$.

\begin{proposition}\label{hopprop}
$x_0$ is not adjacent to any inner vertices of 2-intervals.
\end{proposition}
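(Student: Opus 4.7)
The plan is to suppose, for contradiction, that $x_0 \sim z$ for some inner vertex $z=b_k^+=b_{k+1}^-$ of a $2$-interval (so $k\ne i,j$), and then exhibit the cutset $S:=B\cup\{x_0\}$ of size $|B|+1$ whose removal splits $G$ into at least $|B|+2$ components, contradicting $(S_1)$. The non-trivial step is showing that $x_0^-$ and $x_0^+$ are both isolated in $G-S$; this will come from the Hopping Lemma.

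First I apply Lemma \ref{hopping} to $G$, $\overrightarrow{C}$ and $u$, choosing $C$ to minimize $\omega(G-V(C))$ among longest cycles compatible with \textbf{(S)}, so that the hypothesis of the Hopping Lemma is met. A direct computation identical to the one used in the proofs of Lemmas \ref{claim2} and \ref{claim3} gives $v\in Y_1$, $B\subseteq X_2$ and $B^+\cap B^-\subseteq Y_2$. Since $z\in B^+\cap B^-\subseteq Y_2$ and $x_0z\in E(G)$, this yields $x_0\in X_3$. Then $(x_0^-)^-=b_i\in X_2\subseteq X_3$ and $(x_0^-)^+=x_0\in X_3$, so by definition of $Y_3$ we get $x_0^-\in Y_3$; symmetrically $x_0^+\in Y_3$.

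Next, the membership $x_0^-,x_0^+\in Y_3$ forces $N(x_0^-),N(x_0^+)\subseteq N(Y_3\cup\{u\})=X_4\subseteq X$. Part~(b) of Lemma \ref{hopping} forbids $X$ from containing two consecutive vertices of $\overrightarrow{C}$; since $b_{i+1},b_j,b_{j+1}\in B\subseteq X$, this rules out $x_0^+=b_{i+1}^-$, $y_0=b_j^+$ and $y_0^+=b_{j+1}^-$ from lying in $X$. Hence $x_0^-\not\sim x_0^+,y_0^+$ and $x_0^+\not\sim x_0^-,y_0$. Combined with Corollary \ref{l5c}, which forbids $x_0^-\in B^+$ from meeting $B^+\cup V(G-C)$ and $x_0^+\in B^-$ from meeting $B^-\cup V(G-C)$, every neighbour of $x_0^-$ and of $x_0^+$ in $G$ then lies inside $B\cup\{x_0\}=S$.

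Finally, I count the components of $G-S$, whose vertex set is $V(G-C)\cup(B^+\cap B^-)\cup\{x_0^-,x_0^+,y_0,y_0^+\}$. Corollary \ref{l5c} makes every vertex of $V(G-C)$ and every vertex of $B^+\cap B^-$ isolated in $G-S$ (its neighbours lie in $V(C)\setminus(B^+\cup B^-)=B\cup\{x_0\}=S$); the previous paragraph makes $x_0^-$ and $x_0^+$ isolated as well; and the $C$-edge $y_0y_0^+$, together with the non-adjacencies just proved, makes $\{y_0,y_0^+\}$ a single component. Using $|B^+\cap B^-|=|B|-2$ (one inner vertex per $2$-interval, together with $v$) and $|V(G-C)|\ge 1$, the component count of $G-S$ is at least $1+(|B|-2)+2+1=|B|+2>|S|$, contradicting $1$-toughness. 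The main obstacle is the Hopping-Lemma bookkeeping ruling out the chord-edges $x_0^-x_0^+$, $x_0^-y_0^+$ and $x_0^+y_0$; once those are eliminated, Corollary \ref{l5c} handles the rest of the count.
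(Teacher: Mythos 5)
Your proposal is correct and follows essentially the same route as the paper: apply the Hopping Lemma to get $x_0\in X_3$ and $x_0^\pm\in Y_3$, rule out the chords $x_0^-x_0^+$, $x_0^+y_0$, $x_0^-y_0^+$ via parts (b)--(d), and then count at least $|B|+2$ components of $G-(B\cup\{x_0\})$ to contradict $1$-toughness. Your write-up merely makes explicit the component count that the paper delegates to an earlier paragraph.
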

\begin{proof}
Assume otherwise that $x_0$ is adjacent to some inner vertex $a$ of a 2-interval. Apply Hopping Lemma \ref{hopping} to $G$. Similar to Lemma \ref{claim2}, we have (\ref{hoppingeq})--(12d), and $a\in Y_2$.  We have also $x_0\in N(a)\subseteq N(Y_2\cup\{u\})=X_3$. By definition, $x_0^+,x_0^-\in Y_3$. In particular, by part (d) of the Hopping Lemma, we have $x_0^+x_0^-\notin E(G)$.

If $x_0^+y_0\in E(G)$, then $y_0\in N(x_0^+)\subseteq N(Y_3\cup\{u\})= X_4\subseteq X$. However, we also have $y_0^-\in B\subseteq X_2\subseteq X$. It means that we have two consecutive vertices on $\overrightarrow{C}$ lying in $X$, this contradicts the part (b) of the Hopping Lemma \ref{hopping}. Thus, we have $x_0^+y_0\notin E(G)$. Similarly, we obtain $x_0^-y_0^+\notin E(G)$.

Finally, by Corollary \ref{l5c}, $x_0^+y_0^+$ and $x_0^-y_0$ are not in $E(G)$. Therefore, similar to the second last paragraph of Section 3.1, the graph $G-(B\cup\{x_0\})$ has at least $|B|+2$ components, i.e. $G$ is not 1-tough, a contradiction. Then the proposition follows.
\end{proof}

We also have the following fact about the inner vertices of the 3-interval and the 4-interval.

\begin{proposition}\label{clm1}
$x_0^+x_0^-\notin E(G)$. Moreover, exactly one of  $y_0^+x_0^-$ and $y_0x_0^+$ is in $E(G)$.
\end{proposition}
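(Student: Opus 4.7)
I plan to establish both assertions by contradiction, coupling a component count in $G - B$ (respectively $G - (B \cup \{x_0\})$) against $1$-toughness with explicit longer-cycle constructions when the forbidden chords coexist.

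For the first assertion, assume $x_0^- x_0^+ \in E(G)$. Since $x_0^{-2} = b_i$ and $x_0^{+2} = b_{i+1}$ both lie in $B$, Lemma \ref{claim1} applied in both orientations yields that $x_0$ has no neighbor in $(B^+ \cup B^-) \setminus \{x_0^-, x_0^+\}$; in particular $x_0 y_0, x_0 y_0^+ \notin E(G)$, and $x_0$ is not adjacent to any $2$-interval inner vertex (a fact already secured by Proposition \ref{hopprop}). In $G - B$, Corollary \ref{l5c} then forces each of the $|B|-2$ inner vertices of $2$-intervals (including $v$) to be isolated, $\{u\}$ to be isolated, $\{y_0, y_0^+\}$ to form one component, and $\{x_0^-, x_0, x_0^+\}$ to form another. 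Corollary \ref{l5c} also eliminates every potential bridge between the last two components except $y_0 x_0^+$ and $y_0^+ x_0^-$. If neither is present, $G - B$ has $|B| + 1$ components after removing $|B|$ vertices, contradicting $1$-toughness.

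Hence at least one bridge is present; by the reversing-orientation trick of Remark \ref{trick1} I may assume $y_0 x_0^+ \in E(G)$. Applying Lemma \ref{l12} to the chord $x_0^+ y_0$ (noting $x_0^+ = b_{i+1}^-$ and $y_0 = b_j^+$), and using Remark \ref{trick2}, I may further assume $b_{i+1}, b_j \in N(u) \setminus N(v)$. I then reassemble a cycle of length at least $|V(C)| + 1$ by following $u v^+ \overrightarrow{C} b_i \to x_0^- \to x_0^+ \to y_0 \overleftarrow{C} b_{i+1} \to u$ and closing via $u v^- \overleftarrow{C} b_{j+1} \to y_0^+$ with $x_0$ reinserted through an available $B$-neighbor; Lemma \ref{l12}(2) rules out the obstructions $b_{i+1} v^{\pm 2}, b_j v^{\pm 2}$ that could block this re-routing, and Proposition \ref{small} (no small pairs) forces enough of $x_0$'s $B$-adjacencies to be present. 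This contradicts the maximality of $C$ and establishes the first assertion.

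For the second assertion, with $x_0^- x_0^+ \notin E(G)$ from the first part, I perform the analogous count in $G - (B \cup \{x_0\})$. Now $\{x_0^-\}$ and $\{x_0^+\}$ are separate singletons unless joined via the $3$-interval component by one of the same two possible bridges $y_0 x_0^+, y_0^+ x_0^-$. If neither is in $E(G)$, I obtain $|B| + 2$ components after removing $|B| + 1$ vertices, contradicting $1$-toughness. To rule out both, assume $y_0 x_0^+, y_0^+ x_0^- \in E(G)$; by Lemma \ref{l12} and Remark \ref{trick2} I again assume $b_{i+1}, b_j \in N(u) \setminus N(v)$ and symmetrically $b_i, b_{j+1}$. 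Using both chords plus the cycle edges $x_0^- x_0, x_0 x_0^+$, I assemble a cycle $u v^+ \overrightarrow{C} b_i x_0^- y_0^+ \overleftarrow{C} y_0 x_0^+ \overleftarrow{C} b_{i+1}$ and close it through a detour carrying $x_0, b_{j+1}, \ldots, v^-$ back to $u$, yielding a cycle strictly longer than $C$. Hence exactly one of $y_0 x_0^+$ and $y_0^+ x_0^-$ lies in $E(G)$.

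The main obstacle is verifying the longer-cycle constructions in each subcase. The core idea is that the off-cycle vertex $u$, with its guaranteed cycle-neighbors $v^\pm$, supplies the single extra vertex beyond $|V(C)|$ needed for a contradiction, while the forbidden chords (two or three of them) provide enough re-routing freedom to realize a cycle on $V(C) \cup \{u\}$. Lemma \ref{l12} restricts the $N(u), N(v)$-membership of the interval endpoints adjacent to a chord, and Remarks \ref{trick1} and \ref{trick2} compress the otherwise numerous subcases to two representative ones.
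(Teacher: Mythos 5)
Your overall skeleton for the second assertion (a component count in $G-(B\cup\{x_0\})$ to force at least one of $y_0x_0^+$, $y_0^+x_0^-$, then an argument that they cannot coexist) is exactly the paper's, and that component count is fine. The gap is in how you rule out the coexisting chords, and in how you finish the first assertion. In both places you fall back on ad hoc longer-cycle constructions that do not describe valid simple cycles. In the first assertion your route $u\,v^+\overrightarrow{C}b_i\to x_0^-\to x_0^+\to y_0\overleftarrow{C}b_{i+1}\to u$ followed by ``closing via $u\,v^-\overleftarrow{C}b_{j+1}\to y_0^+$'' passes through $u$ twice, omits $x_0$, and defers its reinsertion to ``an available $B$-neighbor'' whose existence and position are never established (Proposition \ref{small} gives a lower bound on $|N(x_0,\cdot)|$ for pairs at distance $2$; it does not hand you a usable adjacency of $x_0$ into $B$). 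In the second assertion the segment $x_0^+\overleftarrow{C}b_{i+1}$ runs the wrong way around $C$ (recall $b_{i+1}=x_0^{+2}$ lies \emph{after} $x_0^+$), and the ``detour carrying $x_0,b_{j+1},\ldots,v^-$'' is unspecified. Your appeals to Lemma \ref{l12} are also misplaced: that lemma concerns a chord $b_p^-b_q^+$ with $p<q$, whereas $y_0^+x_0^-=b_{j+1}^-\,b_i^+$ has the indices in the wrong order, and $x_0^+y_0=b_{i+1}^-\,b_j^+$ only qualifies when $i+1<j$.

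The paper needs none of this machinery. For $x_0^-x_0^+\notin E(G)$ it applies Lemma \ref{claim1} (both orientations, using $x_0^{\pm2}\in B$) to conclude that $x_0$ is adjacent to no inner vertex of any other interval, whence all intervals are pairwise inner-disconnected and Lemma \ref{l7} gives the contradiction directly --- no cycle is ever built. For the coexistence of $y_0x_0^+$ and $y_0^+x_0^-$ it observes that $b_{i+1}\overrightarrow{C}y_0x_0^+x_0x_0^-y_0^+\overrightarrow{C}v^-$ covers exactly the vertex set $b_i^+\overrightarrow{C}v^-$ with ends $v^-$ and $b_{i+1}$, i.e.\ it is a bad path of form (ii), contradicting Proposition \ref{alpha}; the longer-cycle work you are trying to redo by hand is already packaged there. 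To repair your write-up you would need either to reproduce that bad path (which your two chords plus the cycle edges $x_0^-x_0$, $x_0x_0^+$ do supply) or to exhibit a concrete, verifiable cycle on $V(C)\cup\{u\}$; as written, neither case analysis closes.
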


\begin{proof}
 By Lemma \ref{claim1}, if $x_0^-x_0^+ \in E(G)$, then
$x_0$ is not adjacent to any inner vertices of 2-intervals and the 3-interval. By Corollary \ref{rmk3}, the intervals on $\overrightarrow{C}$ are pairwise inner-disconnected, by Lemma \ref{l7}, we have a contradiction. Therefore, $x_0^-x_0^+\notin E(G)$.

 If both $y_0$ and $y_0^+$ are not adjacent to $x_0^-$ and $x_0^+$, then by arguing similarly to the second last paragraph in Section 3.1, the graph $G- (B\cup \{x_0\})$ has at least $|B|+2$ components. Hence, $G$ is not 1-tough, a contradiction. This implies that at least one of $y_0$ and $y_0^+$ is adjacent to $x_0^-$ or $x_0^+$. Moreover, by Corollary \ref{l5c}, $y_0x_0^-,\ y_0^+x_0^+ \notin E(G)$, so at least one of $y_0^+x_0^-$ and $y_0x_0^+$ is in $E(G)$.

However, if both $y_0^+x_0^-$ and $y_0x_0^+$ are in $E(G)$, then
$b_{i+1}\overrightarrow{C}y_0x_0^+x_0x_0^-y_0^+\overrightarrow{C}v^-$ is a
bad path, which contradicts what we got from Proposition \ref{alpha}. This implies that exactly one of  $y_0^+x_0^-$ and $y_0x_0^+$ is in $E(G)$.
\end{proof}

\begin{proposition}\label{prop1}
$y_0^+x_0^- \notin E(G)$.
\end{proposition}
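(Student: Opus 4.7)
Assume, for contradiction, that $y_0^+x_0^-\in E(G)$. My plan is to rule out this edge by combining the Hopping Lemma with the structural restrictions already collected in Propositions \ref{hopprop} and \ref{clm1}, and then to exploit an alternative longest cycle via the interchanging roles trick (Remark~\ref{trick2}).

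\smallskip\noindent\textbf{Step 1: Tighten the local edge structure.}
By Proposition \ref{clm1} applied to our assumption, $y_0x_0^+\notin E(G)$ and $x_0^-x_0^+\notin E(G)$; together with Corollary \ref{l5c} this leaves $y_0^+x_0^-$ as the \emph{unique} edge among the six pairs in $\{y_0,y_0^+\}\times\{x_0^-,x_0^+\}$. Applying Lemma \ref{l9}(1) with $a=x_0^-$ and $c=y_0^+$ further forbids chords from $\{b_i,b_{j+1}\}$ to $x_0^+$, $y_0$, and to every inner vertex of a 2-interval lying between $b_{i+1}$ and $b_j$.

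\smallskip\noindent\textbf{Step 2: Propagate via the Hopping Lemma.}
Apply Lemma \ref{hopping} to $(G,\overrightarrow{C},u)$. As in the proofs of Lemmas \ref{claim2}, \ref{claim3} and Proposition \ref{hopprop}, we have $B\subseteq X_2$ and $(B^+\cap B^-)\cup\{v\}\subseteq Y_2$. By part (b), none of $x_0^-,x_0^+,y_0,y_0^+$ lies in $X$. The chord $y_0^+x_0^-$ then forces the two implications
\[
x_0^-\in Y\ \Longrightarrow\ y_0^+\in X,\qquad y_0^+\in Y\ \Longrightarrow\ x_0^-\in X,
\]
both of which are blocked by part (b). Hence $x_0^-,y_0^+\notin Y$, and unwinding the defining conditions of $Y_k$ (using $b_i,b_{j+1}\in X$) yields $x_0,y_0\notin X$. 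In particular $x_0,y_0$ have no neighbor in $Y\cup\{u\}$.

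\smallskip\noindent\textbf{Step 3: Produce an equal-length alternative cycle.}
Using the chord together with the edges $uv^+,uv^-\in E(G)$, form
\[
C^*\;=\;v^+\,u\,v^-\overleftarrow{C}\,b_{j+1}\,y_0^+\,y_0\overleftarrow{C}\,b_{i+1}\,x_0^+\,x_0\,x_0^-\overleftarrow{C}\,v^+.
\]
A direct verification (every edge used is either a cycle edge, one of $v^\pm u$, cycle edges of the 3- and 4-intervals, or the chord $y_0^+x_0^-$) shows that $C^*$ is a cycle with $|C^*|=|C|$; moreover $v\notin V(C^*)$ while $u\in V(C^*)$.

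\smallskip\noindent\textbf{Step 4: Apply the interchanging roles trick.}
Since $v\notin V(C^*)$ and $u$'s $C^*$-neighbors are $v^+,v^-\in N(v)$, Remark \ref{trick2} says that $(v,u,\overrightarrow{C^*})$ is a valid setup $(S')$ with the \emph{same} set $B$. Reading off intervals along $C^*$, the 3- and 4-intervals keep the same inner vertices as in $(S)$, but their cyclic positions are swapped; after, if needed, an application of the reversing orientation trick (Remark \ref{trick1}) we obtain a setup $(S'')$ in standard form. All of the earlier structural results (Corollary \ref{l5c}, Lemma \ref{l7}, Propositions \ref{small}--\ref{alpha}, Propositions \ref{hopprop}--\ref{clm1}) apply to $(S'')$, because none of them uses the present Proposition \ref{prop1}.

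\smallskip\noindent\textbf{Step 5: Combine the two setups to derive a contradiction.}
Tracing the relabelling of inner vertices through Step 4, the edge $y_0^+x_0^-$ of $(S)$ corresponds, in $(S'')$, to the chord of type ``$\bar{y}_0\bar{x}_0^+$'' rather than ``$\bar{y}_0^+\bar{x}_0^-$''. Applying Proposition \ref{clm1} in $(S'')$ thus tells us that the \emph{other} chord in $(S'')$ is forbidden, which, translated back to $(S)$, forbids $y_0^+x_0^-$ itself --- contradicting our assumption. If the correspondence lands on the same edge (a borderline case caused by $j=i+1$), one instead combines the forbidden adjacencies furnished by Lemma \ref{l9} in both setups to assemble a bad path of form (ii) on $b_i^+\overrightarrow{C}v^-$ (analogous to the bad path built in the proof of Proposition \ref{clm1}, but now with the role of $y_0x_0^+$ played by a chord produced in $(S'')$), contradicting Proposition \ref{alpha}.

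\smallskip\noindent\textbf{Main obstacle.}
The chord $y_0^+x_0^-$ by itself only yields a \emph{same-length} alternative cycle $C^*$, not a longer one, and the toughness counts for cuts such as $B\cup\{x_0\}$ turn out to be exactly tight. The crux of the argument is therefore the ``dual'' analysis in Steps 4--5: establishing rigorously that $(S'')$ gives a genuinely new constraint once the relabellings among $x_0^-,x_0^+,y_0,y_0^+$ and the indices $i,j$ are pinned down (in particular handling the degenerate case $j=i+1$, where the 3- and 4-intervals are adjacent on $\overrightarrow{C}$). Once this bookkeeping is done, the contradiction follows.
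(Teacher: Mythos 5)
Your proposal has a genuine gap: the symmetry argument in Steps 3--5, which carries the entire weight of the proof, does not work. First, the cycle $C^*$ you write down in Step 3 never actually uses the chord $y_0^+x_0^-$ --- every consecutive pair in your expression ($b_{j+1}y_0^+$, $y_0^+y_0$, $b_{i+1}x_0^+$, $x_0x_0^-$, etc.) is already an edge of $C$, so $C^*$ is just the cycle $uv^+\overrightarrow{C}v^-u$ of Remark~\ref{trick2}. Consequently the new setup $(S')$ has the same cycle (up to replacing $v$ by $u$), the same set $B$, the same intervals in the same cyclic positions, and the same orientation; your claim in Step 4 that the $3$- and $4$-intervals have ``their cyclic positions swapped'' is false. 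The central assertion of Step 5 --- that under the relabelling the edge $y_0^+x_0^-$ becomes a chord ``of type $\bar y_0\bar x_0^+$'' --- is therefore unjustified, and in fact cannot be true: $y_0^+x_0^-$ joins $b_i^+$ to $b_{j+1}^-$ (the ``outward'' chord configuration of Lemmas~\ref{l9} and \ref{l10}), whereas $y_0x_0^+$ joins $b_{i+1}^-$ to $b_j^+$ (the configuration of Lemma~\ref{l12}); no symmetry of the setup compatible with the normalization $i<j$ (4-interval first) exchanges these two configurations. Even if such a symmetry existed, applying Proposition~\ref{clm1} to the image setup would only reproduce the statement ``exactly one of the two chords is present,'' which forbids nothing new --- the argument would be circular. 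Finally, your treatment of $j=i+1$ as a degenerate afterthought is backwards: the paper's Claim~\ref{clm2} shows that $b_{i+1}\equiv b_j$ is \emph{forced} under the assumption $y_0^+x_0^-\in E(G)$, so this is the main case, not a borderline one.

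What is actually needed, and what the paper supplies, is a chain of \emph{small-pair} arguments driven by Proposition~\ref{small} (since $|B|=NC2(G)$, no pair $(x,y)$ at distance $2$ may satisfy $|N(x,y)|\le|B|-1$): one first shows $b_{i+1}\equiv b_j$ by exhibiting $(b_{i+1}^+,x_0^+)$ as a small pair otherwise; then $b_j\notin N(u)\cap N(v)$ via the pair $(y_0,x_0^+)$ and Lemma~\ref{l10}; then, using Lemma~\ref{l9}(2) and Proposition~\ref{hopprop}, one bounds the neighborhoods of $v^{\pm2}$ and of the vertex of $\{u,v\}$ not adjacent to $b_j$, producing a final small pair $(w_1,w_2)$ and the contradiction. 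Your Steps 1--2 collect some of the right local restrictions (and your Hopping Lemma deductions are consistent with Proposition~\ref{hopprop}), but they are never converted into a neighborhood-union count against $NC2(G)$, which is the mechanism that actually kills the edge $y_0^+x_0^-$.
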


\begin{proof}
Assume otherwise that $y_0^+x_0^- \in E(G)$. We have the following claims.



%
\begin{claim} \label{clm2}
 $b_{i+1}\equiv b_j$.
\end{claim}
\begin{proof}
Suppose otherwise that $b_{i+1}\not= b_j$,  then $b_{i+1}^+\in B^+ \cap B^-$. Recall that we have $V(C)-B=B^-\cup B^+ \cup\{x_0\}$, $B^--B^+=\{x_0^+,y_0^+\}$, and $B^+-B^-=\{x_0^-,y_0\}$.

By Corollary \ref{l5c} and Proposition \ref{hopprop}, $N(b_{i+1}^+)\subseteq V(C)$ and $N(b_{i+1}^+)\cap (B^+\cup B^-\cup \{x_0\})=\emptyset$. Thus, $N(b_{i+1}^+)\subseteq B$.

By Proposition \ref{clm1}, $y_0x_0^+\notin E(G)$. Since $x_0^+\in B^-$,  Corollary \ref{l5c} implies that $N(x_0^+)\cap (B^-\cup V(G-C))=\emptyset$. Thus, $N(x_0^+)\subseteq B\cup \{x_0\}$.

Finally, from Lemma \ref{l9},  all $b_{i+1}^+b_i$, $b_{i+1}^+b_{j+1}$, $x_0^+b_i$, $x_0^+b_{j+1}$ are not in $E(G)$. Thus, $N(b_{i+1}^+,x_0^+)
\subseteq B\cup\{x_0\}-\{b_i,b_{j+1}\}$. This deduces that $(b_{i+1}^+,x_0^+)$ is a
small pair, contradicting Proposition \ref{small}.
\end{proof}
We have now all $2$-intervals stay in the $C$-path $b_{j+1}\overrightarrow{C}b_i$. The assumption $(S_3)$ of our setup implies that the later $C$-path contains at least four good 2-intervals.

\begin{claim} \label{clm3}
 $b_j\notin N(u)\cap N(v)$.
\end{claim}
\begin{proof}
 Suppose otherwise that $b_j$ is adjacent to both $u$ and $v$. We show that $(x_0^+b_{i+1}^+)$ is still a small pair. Note that we have now $b_{i+1}^+\equiv y_0$.

We have $y_0x_0\notin E(G)$ (otherwise $b_j\overleftarrow{C}x_0y_0y_0^+x_0^-\overleftarrow{C}v^+ $ is a bad path, a contradiction to Proposition \ref{alpha}), and $y_0x_0^+\notin E(G)$ (by Proposition \ref{clm1}). Since $y_0\in B^+$, we have $N(y_0)\subseteq B\cup\{y_0^+\}$.

Moreover, from the proof of Claim \ref{clm2}, $N(x_0^+)\subseteq B\cup \{x_0\}$, and $x_0^+,b_{i+1}^+\equiv y_0$ are not adjacent to $b_{i},b_{j+1}$.

Finally, by Lemma \ref{l10}, all $y_0v^+$, $y_0v^-$, $x_0^+v^+$ and $x_0^+v^-$ are not in $E(G)$. Therefore,
\begin{equation}\label{eqx0y0}
N(y_0,x_0^+)\subseteq B\cup \{y_0^+,x_0\}-\{b_i,b_{j+1},v^+,v^-\}.
\end{equation}
Since  $b_{j+1}\overrightarrow{C}b_i$ contains at least four (good) 2-intervals, we have $b_i\not=v^+$ or $b_{j+1}\not=v^-$. Thus, $|\{b_i,b_{j+1},v^+,v^-\}|\geq 3$, so $|N(y_0,x_0^+)|\leq |B|-1$. This implies that $(y_0,x_0^+)$ is a small pair, a contradiction to Proposition \ref{small}, and the claim follows.
\end{proof}



\begin{claim} \label{clm5}
 If $b_{j+1}\not=v^-$, then $v^{-2}x_0\notin E(G)$. Analogously,  if $b_{i}\not=v^+$, then $v^{+2}x_0\notin E(G)$.
\end{claim}

\begin{proof}
Assume that $b_{j+1}\not= v^-$, then $v^{-2}$ is the inner vertex of some 2-interval. Therefore, the first statement follows from Proposition \ref{hopprop}. Similarly we have the second statement.
%
\end{proof}

\begin{claim} \label{clm6}
If $v^+\not=b_i$, then $v^{+2}b_{i+1}\notin E(G)$. Analogously, if $v^-\not=b_{j+1}$, then $v^{-2}b_{i+1}\notin E(G)$.
\end{claim}

\begin{proof}
We only need to prove the first statement (then the second one follows from the reversing orientation trick).

Suppose otherwise that $v^+\not=b_i$ and $v^{+2}b_{i+1}\in E(G)$. From Lemma \ref{l9}(2), we have $x_0^+v^+,\ y_0v^+ \notin E(G)$.
Similar to (\ref{eqx0y0}), we have
\[N(x_0^+,y_0)\subseteq B\{y_0^+,x_0\}-\{b_i,b_{j+1},v^+\}.\]
 Since $v^+\not=b_i$, we obtain $|\{b_i,b_{j+1},v^+\}|=3$. Thus, $(x_0^+,y_0)$ is a small pair, which contradicts Proposition \ref{small}.
%
\end{proof}

\medskip

As mentioned in the proof of Claim \ref{clm3}, we have $v^+\not=b_i$ or $v^-\not=b_{j+1}$ (due to the assumption $(S_3)$ of our setup on the number of (good) 2-intervals). We have $v^{+2}\in B^+\cap B^-$ or $v^{-2}\in B^+\cap B^-$, respectively, i.e. $\{v^{+2},v^{-2}\}\cap (B^+\cap B^-)\not=\emptyset$.  Assume that $w_1$ is a vertex in the later intersection. Moreover, by Claim \ref{clm3}, we can denote by $w_2$ the vertex in $\{u,v\}$ which is adjacent to $b_{i+1}\equiv b_j$.

By Claims \ref{clm3}, \ref{clm5} and \ref{clm6}, we have $N(w_1,w_2)\subseteq B-\{b_{i+1}\}$. Thus $(w_1,w_2)$ is a small pair, a contradiction.  Hence, the proposition follows.
%
%
\end{proof}

From Propositions \ref{clm1} and  \ref{prop1}, we have $x_0^+y_0\in E(G)$.
\begin{proposition}\label{prop2}
$\{b_{i+1},b_j\}\nsubseteq N(u)$ and $\{b_{i+1},b_j\}\nsubseteq N(v)$.
\end{proposition}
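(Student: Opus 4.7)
My plan is to proceed by contradiction. Using the interchanging roles trick (Remark \ref{trick2}), it suffices to prove $\{b_{i+1},b_j\}\nsubseteq N(u)$, since the assertion $\{b_{i+1},b_j\}\nsubseteq N(v)$ then follows by swapping the roles of $u$ and $v$ in the setup. So assume for contradiction that $b_{i+1}u, b_ju\in E(G)$, and split the argument according to whether $b_{i+1}=b_j$.

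If $b_{i+1}\neq b_j$ (so $i+1<j$), apply Lemma~\ref{l12} to $a=x_0^+=b_{i+1}^-$ and $c=y_0=b_j^+$; the required edge $ac=x_0^+y_0$ was established just before the statement of the proposition, and $p=i+1<q=j$ is satisfied. Part~(1) forces both $b_{i+1}$ and $b_j$ into $N(u)\setminus N(v)$, hence $b_{i+1}v, b_jv\notin E(G)$; part~(2) further excludes $b_{i+1}v^{\pm 2}$ and $b_jv^{\pm 2}$ from $E(G)$. To close the argument, I would mimic the small-pair endgame used at the end of the proof of Proposition~\ref{prop1} (its Claims~\ref{clm5}, \ref{clm6}, and the concluding paragraph): combining the exclusions above with Corollary~\ref{l5c}, Proposition~\ref{hopprop}, and Lemmas~\ref{l9} and \ref{l10} applied across the chord $x_0^+y_0$, produce a pair $(w_1,w_2)$ at distance $2$ whose combined neighborhood satisfies $|N(w_1)\cup N(w_2)|\leq|B|-1$, contradicting Proposition~\ref{small}. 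A natural first choice is $w_1\in(B^+\cap B^-)$ lying near $v$ (for instance, the inner vertex of a good $2$-interval, whose existence is guaranteed by $(S_3)$), paired with a suitable $w_2\in\{u,v\}$.

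If $b_{i+1}=b_j$, then since $b_{i+1}\in B=N(u)\cup N(v)$, the statement of the proposition forces $b_{i+1}\notin N(u)$ and $b_{i+1}\notin N(v)$, which is incompatible; hence I must rule out the possibility $b_{i+1}\in N(u)$ in this case. When additionally $b_{i+1}\in N(v)$, I construct the cycle
\[
C^*\ :=\ v\,b_{i+1}\,u\,v^+\overrightarrow{C}\,x_0^+\,y_0\,\overrightarrow{C}\,v^-\,v,
\]
which uses the chord $vb_{i+1}$, the edges $b_{i+1}u$ and $uv^+$, the $C$-segment from $v^+$ to $x_0^+$, the chord $x_0^+y_0$, the $C$-segment from $y_0$ to $v^-$, and the $C$-edge $v^-v$. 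A direct check shows $C^*$ visits every vertex of $V(C)\cup\{u\}$ exactly once, giving $|V(C^*)|=|V(C)|+1$ and contradicting the maximality of $C$. When instead $b_{i+1}\in N(u)\setminus N(v)$ (and symmetrically $\in N(v)\setminus N(u)$ by the interchanging roles trick), this construction is unavailable, and I would fall back on a small-pair argument analogous to the one used in the first case, tailored to the collapsed geometry $j=i+1$, again leveraging the chord $x_0^+y_0$ and the structural constraints from Propositions~\ref{hopprop} and \ref{prop1}.

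The main obstacle I anticipate is the small-pair construction in the mixed situations (both $b_{i+1}\neq b_j$, and $b_{i+1}=b_j\in N(u)\setminus N(v)$). The natural cycle-enlargement attempt, combining the $u$-detour $b_{i+1}ub_j$ with the chord $x_0^+y_0$, produces a cycle of length $|V(C)|+1-|\alpha|$, where $\alpha$ denotes the set of $2$-interval inner vertices strictly between $b_{i+1}$ and $b_j$; this beats $|V(C)|$ only when $|\alpha|=0$, i.e., only in the subcase already handled by the $C^*$ construction above. In all remaining situations the contradiction has to emerge from neighborhood accounting rather than cycle enlargement, and the delicate step is systematically excluding every potential adjacency of the candidate small pair by simultaneously invoking Corollary~\ref{l5c}, Proposition~\ref{hopprop}, Lemmas~\ref{l9}, \ref{l10}, and the newly available Lemma~\ref{l12}(2).
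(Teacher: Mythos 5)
Your skeleton matches the paper's: argue by contradiction, reduce to one of the two assertions via the interchanging roles trick, apply Lemma \ref{l12} across the chord $x_0^+y_0$ (with $x_0^+=b_{i+1}^-$ and $y_0=b_j^+$), and finish with a small pair. Your $C^*$ construction for the sub-case $b_{i+1}=b_j\in N(u)\cap N(v)$ is correct (it is essentially the degenerate $p=q$ instance of the cycle built in the proof of Lemma \ref{l12}(1)). However, the heart of the argument --- actually exhibiting the small pair --- is left unexecuted, and the one concrete candidate you offer does not work. You propose taking $w_1$ to be the inner vertex of an arbitrary good $2$-interval; but the only control over adjacencies to $b_{i+1}$ and $b_j$ comes from Lemma \ref{l12}, which excludes exactly the four vertices $u$ or $v$ (part (1)) and $v^{+2},v^{-2}$ (part (2)). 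For a generic $2$-interval inner vertex $w_1$ nothing rules out $w_1b_{i+1}\in E(G)$, and then the bound $|N(w_1,w_2)|\leq|B|-1$ fails. The pair must be $(w_2,v^{+2})$ or $(w_2,v^{-2})$, where $w_2$ is the vertex of $\{u,v\}$ not adjacent to $b_{i+1},b_j$; this is precisely what the paper does (under its normalization, the pair $(u,v^{-2})$ or $(u,v^{+2})$), and it first requires checking that $v^{-2}$ (resp.\ $v^{+2}$) really is a $2$-interval inner vertex. When it is not, i.e.\ when $b_{j+1}\equiv v^-$ and $b_i\equiv v^+$, the count picks up the extra vertex $y_0$ and only yields $N(u,y_0^+)\subseteq(B\cup\{y_0\})-\{b_{i+1},b_j\}$, which is a small pair exactly when $b_{i+1}\neq b_j$.

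That last observation exposes the second, more serious gap: in the collapsed case $b_{i+1}=b_j$ with $b_{i+1}$ adjacent to only one of $u,v$, no small pair closes the argument, contrary to your stated fallback. The paper instead uses the small pairs only to force $b_i\equiv v^+$ and $b_{j+1}\equiv v^-$, and then notes that $b_i=v^+$, $b_{i+1}=b_j$, $b_{j+1}=v^-$ leaves a single $2$-interval on $\overrightarrow{C}$, contradicting the requirement of at least four good $2$-intervals in assumption $(S_3)$. This counting step, not ``neighborhood accounting,'' is what kills the case you yourself flag as the main obstacle, and it is absent from your plan.
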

\begin{proof}
%
%
%
Assume otherwise that $b_{i+1}$ and $b_{j}$ are both adjacent to $u$, or both adjacent to $v$, say $b_{i+1}v,\ b_jv \in E(G)$ (the other case follows from interchanging roles trick).

By Lemma \ref{l12}(1),  we have $ub_{i+1},\  ub_j \notin E(G)$. By Lemma \ref{l12}(2), all $v^{+2}b_{i+1}$, $v^{-2}b_{i+1}$, $v^{+2}b_j$, and $v^{-2}b_j$ are not in $E(G)$. 

Moreover, if $b_{j+1}\not= v^-$, then  $v^{-2}$ is the inner vertex of some 2-interval, so is in $B^+\cap B^-$. By Corollary \ref{l5c}, $v^{-2}$ is not adjacent to any vertices in $B^-\cup B^+$, and by Proposition \ref{hopprop}, $v^{-2}x_0\notin E(G)$. Thus, $N(u,v^{-2})\subseteq B-\{b_{i+1},b_j\}$, so $(u,v^{-2})$ is a small pair, a contradiction to Proposition \ref{small}. This implies that $b_{j+1}\equiv v^-$, so $v^{-2}\equiv y_0^+$. Similarly, we have $v^{+}\equiv b_i$.

We have now $N(u,v^{-2})=N(u,y_0^+)\subseteq (B\cup\{y_0\})-\{b_{i+1},b_j\}$. If $b_{i+1}\not=b_j$, then $|N(u,v^{-2})|\leq|B|-1$, so $(u,v^{-2})$ is still a small pair, a contradiction. Thus, $b_{i+1}\equiv b_j$. However, there is only one 2-interval on $\overrightarrow{C}$, contradicting the assumption $(S_3)$ in our setup, which completes the proof of the proposition.
%
%
%
\end{proof}

One readily sees that Lemma \ref{l12}(1) contradicts Proposition \ref{prop2}. This implies that Case II does not happen.

\subsection{Case III}

We prove the following supporting lemma.

\begin{lemma}\label{inva}
Assume that all intervals on $\overrightarrow{C}$ have length 2 or 3.

(a) Assume in addition that $v_0$ is a vertex different from $v$, so that $v_0^+,v_0^-\in N(u)$. Then $(u,v_o,\overrightarrow{C})$ determines a new setup \textbf{($S^{*}$)} of $G$. Moreover, $N(u)\cup N(v _0)=N(u)\cup N(v)=B$.

(b) Assume in addition that $v_0$ is a vertex different from $v$, so that $v_0^+,v_0^-\in N(v)$.  Then $(v,v_0,\overrightarrow{C'})$ determines a new setup \textbf{($S^{**}$)}. Moreover, $N(v)\cup N(v _0)=N(u)\cup N(v)=B$.
%
%
%
\end{lemma}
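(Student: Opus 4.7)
The plan is to establish the equality $N(u)\cup N(v_0)=B$ in part (a) by proving two inclusions, then reduce part (b) to part (a) using the interchanging roles trick. The easy direction $N(u)\cup N(v_0)\subseteq B$ will come directly from Corollary \ref{l5c}, while the reverse inclusion rests on the Case III equality $|B|=NC2(G)$ established at the start of Section 3, together with the $NC2$-bound.

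For part (a), I would first observe that $v_0^+,v_0^-\in N(u)\subseteq B$ places $v_0$ in $B^+\cap B^-$, and since $v_0\neq v$ and all intervals have length $2$ or $3$, this forces $v_0$ to be the inner vertex of some $2$-interval, so in particular $v_0\notin B$. The same interval-length hypothesis yields the decomposition $V(C)=B\cup B^+\cup B^-$. I would then invoke Corollary \ref{l5c}: any neighbor of $v_0$ outside $B$ would have to lie in $V(G-C)\cup(B^+\setminus B)\cup(B^-\setminus B)$, contradicting the independence of $B^+\cup V(G-C)$ or of $B^-\cup V(G-C)$; hence $N(v_0)\subseteq B$. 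For the reverse inclusion, $v_0\notin N(u)$ (since $N(u)\subseteq B$ and $v_0\notin B$) but the length-$2$ path $u\,v_0^+\,v_0$ exists, so $d(u,v_0)=2$ and the definition of $NC2$ gives $|N(u)\cup N(v_0)|\geq NC2(G)=|B|$; combined with $N(u)\cup N(v_0)\subseteq B$, this forces $N(u)\cup N(v_0)=B$. Verifying $(S_1)$--$(S_3)$ for $(u,v_0,\overrightarrow{C})$ is then immediate: $(S_1)$ is inherited, $(S_2)$ is the very hypothesis on $v_0$, and since $N(u)\cup N(v_0)=B$ the interval decomposition of $\overrightarrow{C}$ is unchanged, so the four $2$-intervals with endpoints in $N(u)$ provided by the original $(S_3)$ still serve.

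For part (b), rather than redoing the argument, I would appeal to the interchanging roles trick (Remark \ref{trick2}): set $\overrightarrow{\widetilde{C}}:=u\,v^+\overrightarrow{C}\,v^-\,u$, so that $(v,u,\overrightarrow{\widetilde{C}})$ is a setup with the same $B$ and the same interval decomposition (still of length $2$ or $3$). Since $v_0\notin B$ forces $v_0\notin\{v^+,v^-\}$, the $\widetilde{C}$-successor and $\widetilde{C}$-predecessor of $v_0$ remain $v_0^+$ and $v_0^-$, and the hypothesis $v_0^+,v_0^-\in N(v)$ becomes exactly the part-(a) hypothesis for the swapped setup (with $v$ now in the role of $u$). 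Applying part (a) there yields that $(v,v_0,\overrightarrow{\widetilde{C}})$ is a setup and $N(v)\cup N(v_0)=B$, which is part (b) with $\overrightarrow{C'}:=\overrightarrow{\widetilde{C}}$.

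The main obstacle is the reverse inclusion $B\subseteq N(u)\cup N(v_0)$. Corollary \ref{l5c} alone only delivers containment, and it seems genuinely necessary to exploit $|B|=NC2(G)$ (specific to the Case III context of Section 3.3) via the $NC2$-bound to rule out the strict inclusion $B'\subsetneq B$; no purely independence-based or cycle-lengthening argument appears to give equality without this cardinality hook.
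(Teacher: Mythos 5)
Your proposal is correct in every context where the paper actually invokes this lemma, but the reverse inclusion is handled by a genuinely different mechanism. The forward containment $N(v_0)\subseteq B$ (via $v_0\in B^+\cap B^-$, the decomposition $V(C)=B\cup B^+\cup B^-$ coming from the length-$2$-or-$3$ hypothesis, and Corollary \ref{l5c}) and the reduction of part (b) to part (a) by the interchanging roles trick are exactly the paper's steps. For $B\subseteq N(u)\cup N(v_0)$, however, the paper does \emph{not} use cardinality: it simply applies the same containment argument once more inside the new setup \textbf{(S*)} with set $B_0:=N(u)\cup N(v_0)$, observing that $v\in B_0^+\cap B_0^-$ and concluding $N(v)\subseteq B_0$, hence $B\subseteq B_0$. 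So your closing claim that no purely independence-based argument yields equality is not accurate --- although, to be fair, the paper's symmetric step quietly uses that the $B_0$-intervals still have length at most $3$ before $B_0=B$ is known, a point your route sidesteps entirely. The price you pay is the extra hypothesis $|B|=NC2(G)$: the lemma as stated does not assume it, so you prove a formally weaker statement. Since both Step 1 and Step 2 establish $|B|=NC2(G)$ before the lemma is ever used (it enters only through Propositions \ref{propn}, \ref{propnn} and \ref{propnnn}), this costs nothing for the paper's purposes, and your argument via $d(u,v_0)=2$ together with the bound $|N(u)\cup N(v_0)|\geq NC2(G)=|B|$ is clean and complete. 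One small caveat, shared with the paper's ``one readily verifies'': checking $(S_3)$ for the new setup really requires a word on why four $2$-intervals that are good for the pair $(u,v)$ remain good for the pair $(u,v_0)$, since an interval with both ends in $N(v)$ need not have both ends in $N(u)$ or in $N(v_0)$.
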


\begin{proof}
Since part (b) is obtained from part (a) by applying interchanging roles trick, we only present the proof of part (a).

One readily verifies that \textbf{(S*)} satisfies $(S_1)$, $(S_2)$ and $(S_3)$. Next, we show that $B_0:=N(u)\cup N(v _0)=B$.

Since all intervals have length at most 3, the inner vertices of the intervals are in $B^+\cup B^-=V(C)-B$. By Corollary \ref{l5c}, $v_0$ is not adjacent to any vertices of set $B^+\cup B^- \cup V(G-C)$. Thus, $N(v_0)=N(v_0)\cap V(C)\subseteq N(u)\cup N(v)$, so $B_0=N(u)\cup N(v_0)\subseteq N(u)\cup N(v)=B$. Apply the same argument to the setup \textbf{(S*)}, we have also $B\subseteq B_0$. Thus, $B=B_0$.
%
%
\end{proof}

We call a 2-interval having both ends in $N(u)$ or $N(v)$ a \textit{good 2-interval}. Intuitively, Lemma \ref{inva} allows us to ``refine" our setup by relocating  the vertex $v$ to any other inner vertices of good 2-intervals.

\begin{remark}\label{r1}
 Let  $b_p\overrightarrow{C}b_{p+1}$ and $b_q\overrightarrow{C}b_{q+1}$ be  two 3-intervals on $\overrightarrow{C}$, for $1\leq p<q\leq m$, whose inner vertices are $x_1,x_1^+$ and $x_2,x_2^+$, respectively. If these 3-intervals  are inner-connected,  then exactly one of $x_1x_2^+$ and $x_1^+x_2$ is in $E(G)$.
Indeed, from Corollary \ref{l5c}, $x_1x_2$ and $x_1^+x_2^+\notin E(G)$. However, if both $x_1x_2^+$ and $x_1^+x_2$ are in $E(G)$, then
$b_q\overleftarrow{C}x_1^+x_2x_2^+x_1\overleftarrow{C}v^+$ is a
bad path, a contradiction.
\end{remark}


Again, we show that Case III does not occur. Assume otherwise that the three vertices of $V(C)-(B^+\cup B)$ stay in three different intervals. Then they are $x_0^{+}$, $y_0^{+}$ and $z_0^{+}$, for some vertices $x_0$, $y_0$ and $z_0$ on $\overrightarrow{C}$ so that $y_0\notin \{x_0^-,x_0^+\}$ and  $z_0 \notin
\{x_0^-,x_0^+,y_0^-,y_0^+\}$. Arguing similarly to the Case I and Case II,  $\overrightarrow{C}$ consists of three $3$-intervals ($x_0^{-}\overrightarrow{C}x_0^{+2}=b_{i}\overrightarrow{C}b_{i+1}$, $y_0^{-}\overrightarrow{C}y_0^{+2}=b_{j}\overrightarrow{C}b_{j+1}$, and $z_0^{-}\overrightarrow{C}z_0^{+2}=b_{k}\overrightarrow{C}b_{k+1}$, for $1\leq i<j<k\leq m-1$) and several 2-intervals. We notice that all 2-intervals fall into three $C$-paths $b_{i+1}\overrightarrow{C}b_{j}$, $b_{j+1}\overrightarrow{C}b_{k}$, and $b_{k+1}\overrightarrow{C}b_{i}$.

Note that we have in this case $V(C)-B=B^+\cup B^-$, $B^+-B^-=\{x_0^+,y_0^+,z_0^+\}$, and $B^--B^+=\{x_0,y_0,z_0\}$.

We have the following result by arguing similarly to the proofs of Propositions \ref{prop1} and \ref{prop2} in Case II.

\begin{proposition}\label{propn}
 $b_{i}\overrightarrow{C}b_{i+1}$ and $b_{k}\overrightarrow{C}b_{k+1}$ are inner-disconnected.
\end{proposition}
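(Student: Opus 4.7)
I argue by contradiction, following the pattern of Propositions \ref{prop1} and \ref{prop2} in Case II. Suppose the two outer 3-intervals $b_i\overrightarrow{C}b_{i+1}$ and $b_k\overrightarrow{C}b_{k+1}$ are inner-connected. Their inner vertices split as $x_0, z_0 \in B^+$ and $x_0^+, z_0^+ \in B^-$, so Corollary \ref{l5c} rules out the edges $x_0 z_0$ and $x_0^+ z_0^+$. By Remark \ref{r1}, exactly one of $x_0 z_0^+$ and $x_0^+ z_0$ lies in $E(G)$, and by the reversing orientation trick (Remark \ref{trick1}) I may assume $x_0^+ z_0 \in E(G)$.

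Writing $x_0^+ = b_{i+1}^-$ and $z_0 = b_k^+$ with $i+1 < k$, I apply Lemma \ref{l12}. Part (1) forces $\{b_{i+1}, b_k\}$ to lie in $N(u)\setminus N(v)$ or in $N(v)\setminus N(u)$; by the interchanging roles trick (Remark \ref{trick2}) I may assume the former. Part (2) then gives the four non-adjacencies $b_{i+1}v^{\pm 2}, b_k v^{\pm 2}\notin E(G)$.

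The finish parallels Claims \ref{clm2}--\ref{clm6} of the proof of Proposition \ref{prop1}. I first aim to show that no 2-interval lies between any two of the three 3-intervals, i.e., $b_{i+1}\equiv b_j$ and $b_{j+1}\equiv b_k$: otherwise an inner vertex $w$ of such an intermediate 2-interval lies in $B^+\cap B^-$ with $N(w)\subseteq B$ by Corollary \ref{l5c}, and combining this with the non-adjacencies from Lemma \ref{l12}(2) together with further restrictions obtained by applying Proposition \ref{alpha} to paths through the chord $x_0^+z_0$, one verifies that $(u,w)$ or an analogous pair forms a small pair, contradicting Proposition \ref{small}. With the three 3-intervals now consecutive, I select $w_1 \in \{v^{+2}, v^{-2}\}\cap (B^+\cap B^-)$ (nonempty in generic cases thanks to $(S_3)$) together with $w_2 \in \{u,v\}$ chosen so that $\{b_{i+1},b_k\}\cap N(w_2) = \emptyset$. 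The accumulated non-adjacencies force $|N(w_1,w_2)|\le |B|-1$ with $d(w_1,w_2)=2$, yielding a final contradiction with Proposition \ref{small}.

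The main obstacle is the case analysis, especially the boundary situations where an outer 3-interval abuts $v^+$ or $v^-$ (that is, $b_i = b_1$ or $b_{k+1} = b_m$). In those sub-cases $v^{+2}$ or $v^{-2}$ coincides with an inner vertex of a 3-interval and hence does not lie in $B^+\cap B^-$, so the intended small pair candidate fails; instead I fall back on counting components of $G-B$ or $G-(B\cup\{w\})$ for a suitable $w$, showing $G$ is not 1-tough and contradicting hypothesis $(S_1)$, in the spirit of the final paragraphs of Case I.
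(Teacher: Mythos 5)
There is a genuine gap at the very first reduction. You claim that, of the two possible chords guaranteed by Remark \ref{r1}, the reversing orientation trick lets you assume $x_0^+z_0\in E(G)$. This is false: reversing $\overrightarrow{C}$ swaps the roles of the two inner vertices \emph{within} each 3-interval and also reverses the order of the two intervals, so the ``outer'' chord $x_0z_0^+$ (joining $b_i^+$ to $b_{k+1}^-$) is carried to an outer chord again, and the ``inner'' chord $x_0^+z_0$ (joining $b_{i+1}^-$ to $b_k^+$) to an inner chord. The two configurations are not exchanged by any symmetry available in the setup, and the paper accordingly treats them as two separate cases with genuinely different arguments. Your proposal only ever engages with the configuration covered by Lemma \ref{l12} (the inner chord), and the case $x_0z_0^+\in E(G)$ --- which is the harder one --- is never addressed.

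That missing case is not a routine repetition of the other. In the paper it requires Lemmas \ref{l9} and \ref{l10} (which apply precisely to chords of the form $b_i^+\,b_{k+1}^-$, not $b_{i+1}^-\,b_k^+$), the claim that $b_{i+1}\equiv b_j$ and $b_{j+1}\equiv b_k$, the dichotomy $v^+\equiv b_i$ or $v^-\equiv b_{k+1}$ (Claim \ref{clm7}), and crucially the relocation Lemma \ref{inva}, which moves $v$ to the inner vertex $v_0$ of a good 2-interval with $v_0^+\not=b_i$ and $v_0^-\not=b_{k+1}$ and re-runs the argument in the new setup to reach a contradiction. None of this machinery appears in your sketch; your fallback of ``counting components of $G-B$'' for boundary situations is not a substitute, since in this configuration the intervals need not be pairwise inner-disconnected and the component count does not go through. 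Separately, even within the inner-chord case your outline conflates the two cases: the identities $b_{i+1}\equiv b_j$, $b_{j+1}\equiv b_k$ are what the paper establishes for the \emph{outer} chord, whereas for the inner chord the paper instead derives $v^+\equiv b_i$ and $v^-\equiv b_{k+1}$ and then exhibits $(u,v^{-2})$ as a small pair.
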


\begin{proof}
Assume otherwise that $b_{i}\overrightarrow{C}b_{i+1}$ inner-connects to $b_{k}\overrightarrow{C}b_{k+1}$. By Remark \ref{r1}, we only need to consider 2 cases as follows.

\medskip
\quad\emph{Case 1.} $x_0z_0^+\in E(G)$.
\medskip


Similar to Claim \ref{clm2} we have
\begin{claim}\label{clm8}
$b_{i+1}\equiv b_j$ and $b_{j+1}\equiv b_k$.
\end{claim}
\begin{proof}
Since the proofs of two statements are essentially the same, we only prove that $b_{i+1}\equiv b_j$.
Assume otherwise that $b_{i+1}\not=b_j$. Similar to the proof of Claim \ref{clm2}, we consider  the pair of vertices $(x_0^+,b_{i+1}^+)$.

We have $x_0^+z_0 \notin E(G)$ (by Remark \ref{r1}), $x_0^+y_0\notin E(G)$ (otherwise we have a bad path $v^+\overrightarrow{C}x_0z^+\overrightarrow{C}y_0x_0^+\overrightarrow{C}b_j$, a contradiction); $x_0^+b_{i},$  $x_0^+b_{k+1},$ $b_{i+1}^+b_{i},$  $b_{i+1}^+b_{k+1} \notin E(G)$ (by Lemma \ref{l9})

Similar to Claim \ref{clm2}, we have
 \begin{equation}
N(x_0^+,b_{i+1}^+)\subseteq B\cup\{x_0\}-\{b_{i},b_{k+1}\}.
\end{equation}
This implies that $(x_0^+,b_{i+1}^+)$ is a small pair, contradicting Proposition \ref{small}. Then the first statement of the claim follows.
\end{proof}

All 2-intervals are now in the $C$-path $b_{k+1}\overrightarrow{C}b_i$. Thus, by the assumption $(S_3)$ in our setup, the later $C$-path contains at least four good 2-intervals.

\begin{claim}\label{clm7}
$v^+\equiv b_{i}$ or $v^-\equiv b_{k+1}$.
\end{claim}

\begin{proof}
Assume otherwise that $v^+\not=b_{i}$ and $v^-\not=b_{k+1}$.

If $b_{i+1}$ is adjacent to both $u$ and $v$,  then by Lemma \ref{l10}, we have $x_0^+$ and $y_0$ are not adjacent to $v^+$ and $v^-$. Moreover, $x_0^+z_0 \notin E(G)$ (by Remark \ref{r1}), $x_0^+y_0\notin E(G)$ (otherwise $v^+\overrightarrow{C}x_0z^+\overrightarrow{C}y_0x_0^+\overrightarrow{C}b_j$ is a bad path, a contradiction); $x_0^+b_{i},$  $x_0^+b_{k+1},$ $y_0b_{i},$  $y_0b_{k+1} \notin E(G)$ (by Lemma \ref{l9}). Therefore,
\begin{equation}\label{eqx+y}
N(x_0^+,y_0)\subseteq B\cup\{x_0,y_0^+,z_0^+\}-\{b_{i},b_{k+1},v^+,v^-\}
\end{equation}
(the vertex $b_{i+1}^+\equiv y_0$ may be adjacent to $y_0^+$ and $z_0^+$).
Since $v^+\not=b_{i}$ and $v^-\not=b_{k+1}$, we have $|\{b_{i},b_{k+1},v^+,v^-\}|=4$, so $(x_0^+,y_0)$ is a small pair, a contradiction to Proposition \ref{small}. Thus, one of $u$ and $v$ is not adjacent to $b_{i+1}$.

If $b_{i+1}v^{+2}, b_{i+1}v^{-2}\in E(G)$, then $x_0^+$ and $b_{i+1}^+$ are not adjacent to $v^+$ and $v^-$ (by Lemma \ref{l9}(2)). It means that we still have (\ref{eqx+y}), so have a contradiction. Hence,  at least one of $v^{+2}$ and $v^{-2}$ is not adjacent to $b_{i+1}$.

Finally, from the facts in the previous two paragraphs, we can denote by $w_1$ (resp., $w_2$)  the vertex in $\{u,v\}$ (resp., in $\{v^{+2},v^{-2}\}$) which is not adjacent to $b_{i+1}$. By definition $N(w_1,w_2)\subseteq B-\{b_{i+1}\}$,  so $(w_1,w_2)$ is a small pair, a contradiction.
\end{proof}

Since  $b_{k+1}\overrightarrow{C}b_i$ contains at least four good 2-intervals, we can find an inner vertex  $v_0$ of a good 2-interval  so that $v_0^+\not=b_i$ and $v_0^-\not=b_{k+1}$. Consider the setup \textbf{(S*)} determined by $(u,v_0,\overrightarrow{C})$ or the setup \textbf{(S**)} determined by $(v,v_0,\overrightarrow{C'})$ as in Lemma \ref{inva}, depending on whether $v_0^+,v_0^-$ are in $N(u)$ or $N(v)$. Apply the argument in Claim \ref{clm7} above to the new setup, we get  $v_0^+\equiv b_{i}$ or $v_0^-\equiv b_{k+1}$, that contradicts the choice of the vertex $v_0$. This contradiction implies $x_0z_0^+\notin E(G)$.

\medskip
\quad\emph{Case 2.} $x_0^+z_0\in E(G)$.

\medskip

By Lemma \ref{l12}, one of $u$ and $v$ is not adjacent to $b_{i+1}$ and $b_{k}$,  say $b_{i+1}u,\ b_ju \notin E(G)$ (the other case follows from interchanging roles trick). Arguing similarly to the proof of Proposition \ref{prop2}. We get $v^+\equiv b_i$ and $v^-\equiv b_{k+1}$.





Then $v^{+2}\equiv x_0$ and $v^{-2}\equiv z_0^+$. By Lemma \ref{l12}(2), $v^{-2}\equiv y_0^+$ is not adjacent to $b_{i+1}$ and $b_{k}$. Moreover, we have $x_0y_0^+ \notin E(G)$ (otherwise we have a bad path $v^+\overrightarrow{C}x_0y_0^+\overrightarrow{C}z_0^+x_0\overrightarrow{C}b_j$, a contradiction). Therefore,
\begin{equation} \label{smallvv+}
N(u,v^{-2})=N(u,y_0^+)\subseteq B\cup\{y_0\}-\{b_{i+1},b_{k}\}.
\end{equation}
However, we have now $(u,y_0^+)$ is a small pair, a contradiction to Proposition \ref{small}. This finishes the proof of the proposition.
%
%
\end{proof}
%
%

Given $b_{i}\overrightarrow{C}b_{i+1}$ and $b_{k}\overrightarrow{C}b_{k+1}$ are inner-disconnected by Proposition \ref{propn}, we get a similar result to Proposition \ref{propn}.
\begin{proposition}\label{propnn}
$b_{i}\overrightarrow{C}b_{i+1}$ and $b_{j}\overrightarrow{C}b_{j+1}$ are inner-disconnected. Analogously, $b_{k}\overrightarrow{C}b_{k+1}$ and $b_{j}\overrightarrow{C}b_{j+1}$ are inner-disconnected.
\end{proposition}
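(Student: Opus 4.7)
The plan is to imitate the proof of Proposition \ref{propn}, with the adjacent pair of 3-intervals at positions $(i,j)$ now playing the role previously played by the ``extremal'' pair $(i,k)$. The second assertion of the proposition---inner-disconnection of $b_k\overrightarrow{C}b_{k+1}$ and $b_j\overrightarrow{C}b_{j+1}$---would then follow from the first by the reversing orientation trick of Remark \ref{trick1}: reversing $\overrightarrow{C}$ turns the adjacent pair $(i,j)$ in the original orientation into the corresponding adjacent pair $(j,k)$ (with suitably relabeled indices) in the reversed orientation, and the property of being inner-connected is orientation-invariant.

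To prove the first assertion, I would suppose for contradiction that $b_i\overrightarrow{C}b_{i+1}$ and $b_j\overrightarrow{C}b_{j+1}$ are inner-connected. By Remark \ref{r1}, exactly one of the edges $x_0y_0^+$ and $x_0^+y_0$ is in $E(G)$, giving two cases parallel to Cases 1 and 2 of Proposition \ref{propn}. In Case 1 ($x_0y_0^+\in E(G)$), I first replay Claim \ref{clm8} to force $b_{i+1}\equiv b_j$: if $b_{i+1}\neq b_j$, then the pair $(x_0^+,b_{i+1}^+)$ would be a small pair, using Corollary \ref{l5c} to restrict both neighborhoods, Remark \ref{r1} to rule out $x_0^+y_0$, the already-proven Proposition \ref{propn} to rule out $x_0^+z_0$, and Lemma \ref{l9}(1) applied with $a=x_0$ and $c=y_0^+$ to rule out edges to $b_i$ and $b_{j+1}$. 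All remaining 2-intervals then lie in the single arc $b_{j+1}\overrightarrow{C}b_i$ (which passes through the 3-interval at position $k$), and by $(S_3)$ at least four of them are good. I would then adapt Claims \ref{clm3}, \ref{clm5}, \ref{clm6}, and \ref{clm7} in order, and finally invoke the refinement trick (Lemma \ref{inva}) to relocate $v$ to an inner vertex of a good 2-interval chosen to violate the analog of Claim \ref{clm7}, obtaining a contradiction exactly as at the end of Case 1 of Proposition \ref{propn}.

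In Case 2 ($x_0^+y_0\in E(G)$), I would mirror Case 2 of Proposition \ref{propn}: Lemma \ref{l12}(1) forces both $b_{i+1}$ and $b_j$ into one of $N(u)\setminus N(v)$ or $N(v)\setminus N(u)$; an argument parallel to Proposition \ref{prop2} then forces $v^+\equiv b_i$ and $v^-\equiv b_{j+1}$, after which the small-pair argument with $(u,v^{-2})$ as in (\ref{smallvv+}) leaves at most one 2-interval on $\overrightarrow{C}$, contradicting $(S_3)$.

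The main obstacle I expect is that in Proposition \ref{propn} the remaining 2-intervals occupy the single short arc $b_{k+1}\overrightarrow{C}b_i$, whereas after collapsing $b_{i+1}\equiv b_j$ here they occupy the longer arc $b_{j+1}\overrightarrow{C}b_i$ that contains the third 3-interval (at position $k$) in its interior. When adapting the small-pair arguments one must check that the inner vertices $z_0$ and $z_0^+$ of this third 3-interval do not contribute forbidden edges; the already-established Proposition \ref{propn} is precisely the tool that rules these out, which is why the two statements of Proposition \ref{propnn} are proved \emph{after} Proposition \ref{propn}.
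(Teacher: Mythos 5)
Your overall strategy is exactly the paper's (the paper omits this proof as ``almost identical'' to that of Proposition \ref{propn}), and your key observation --- that the already-established Proposition \ref{propn} is what excludes the cross edges into the third 3-interval once the 2-intervals occupy the long arc $b_{j+1}\overrightarrow{C}b_i$ containing it --- is the right one. Your reduction of the second assertion to the first by reversing orientation is also sound, since reversal fixes the extremal pair $(i,k)$ and sends the pair $(i,j)$ to $(j,k)$. (A small mislabeling: the claims to adapt in Case 1 are Claims \ref{clm8} and \ref{clm7} from the proof of Proposition \ref{propn}, not Claims \ref{clm3}, \ref{clm5}, \ref{clm6}, which belong to Proposition \ref{prop1} in Case II.)

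The genuine problem is in your Case 2. The Proposition-\ref{prop2}-style argument cannot force $v^-\equiv b_{j+1}$: the intervals are $b_p\overrightarrow{C}b_{p+1}$ for $1\le p\le m-1$ and $j<k\le m-1$, so $j+1\le m-1$ and $b_{j+1}\ne b_m\equiv v^-$ always --- the third 3-interval sits strictly between $b_{j+1}$ and $v^-$. What the argument actually forces is that $v^{-2}$ is not an inner vertex of a 2-interval, hence $v^-\equiv b_{k+1}$ and $v^{-2}\equiv z_0^+$. But then the small-pair computation for $(w_1,v^{-2})$ modeled on (\ref{smallvv+}) breaks down: $N(z_0^+)$ may contain $y_0$, and the edge $y_0z_0^+$ is precisely an inner-connection between the $j$- and $k$-intervals, excluded neither by Corollary \ref{l5c}, nor by Remark \ref{r1}, nor by Proposition \ref{propn} at this stage (it is part of what the second assertion of the present proposition is meant to rule out). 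So $|N(w_1,z_0^+)|$ is only bounded by $|B|$, not $|B|-1$, and no contradiction results. The natural repair is to run the small-pair argument at the other end with $v^{+2}\equiv x_0$: its dangerous neighbors are $y_0^+$ (excluded by Remark \ref{r1}, since you are in the case $x_0^+y_0\in E(G)$) and $z_0^+$ (excluded by Proposition \ref{propn}), giving $N(w_1,x_0)\subseteq B\cup\{x_0^+\}-\{b_{i+1},b_j\}$ and hence a small pair when $b_{i+1}\ne b_j$; the degenerate sub-case $b_{i+1}\equiv b_j$, where the edge $x_0^+y_0$ joins $b_{i+1}^-$ to $b_{i+1}^+$ and Lemma \ref{l12} no longer removes two distinct vertices from the neighborhood union, still needs a separate argument that your sketch does not supply.
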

The proof of Proposition \ref{propnn} is omitted, since it is almost identical to the proof of Proposition \ref{propn}.

However, by Propositions \ref{propn} and \ref{propnn}, together with Corollary \ref{rmk3}, all the intervals on $\overrightarrow{C}$ are now pairwise inner-disconnected, so by Lemma \ref{l7} we have a contradiction. This finishes the proof that $\mid V(C)\mid \not=2NC2(G)+3$.

\section{Step 2: Prove $c(G)\not=2NC2(G)+2$.}

Assume that $| V(C) | =2NC2(G)+2$. By Lemma $\ref{l3}$, we have $|V(C)|\geq | B\cup B^+|= | B| +| B^+|=2| B|$. Moreover, $| B| =| N(u)\cup N(v)|
\geq NC2(G)$, so $| B | =NC2(G)$ or $NC2(G)+1$. If  $ | B | =NC2(G)+1$, then $ V(C)= B\cup B^+ $. It implies that the vertices of $B$ divide $C$ into 2-intervals. From Lemma \ref{l7} and Corollary \ref{rmk3}, we have a contradiction. Hence, $| B| =NC2(G)$, so $|V(C)|=|B\cup B^+|+2$. It means that there
are two vertices in $V(C)-B\cup B^+$. We have two cases to distinguish, depending on the arrangement of these vertices on $\overrightarrow{C}$:
\begin{enumerate}
\item[I] They are in the same interval.
\item[II] They are in two different intervals.
\end{enumerate}
By the sake of contradiction, we will show that all of these cases do not happen.

\quad\textit{Case I.}

Assume that the two vertices of $V(C)-B\cup B^+$ stay in the same interval. Then the vertices are $x_0$ and $x_0^+$, for some vertex $x_0\in V(C)$. The cycle
 $\overrightarrow{C}$ consists of one 4-interval ($x_0^{-2}\overrightarrow{C}x_0^{+2}=b_{i}\overrightarrow{C}b_{i+1}$, for some $1\leq i\leq m-1$) and several 2-intervals.

From Lemma \ref{claim1}, if $x_0^-x_0^+\in E(G)$
then $x_0$ is not adjacent to any inner vertices of 2-intervals. Then by Corollary \ref{l5c}, all intervals are pairwise inner-disconnected, and from Lemma \ref{l7} we have a
contradiction. Thus, $x_0^-x_0^+ \notin E(G)$. However, similar to the second last paragraph of Section 3.1, the graph $G- (B\cup \{x_0\})$ has at least $|B|+2$ components, so $G$ is not 1-tough, a contradiction. Thus, the Case I does not happen.

\textit{Case II.}

Assume that the two vertices of $V(C)-B\cup B^+$ fall into two different intervals.  Then the vertices  are $x_0^{+}$ and $z_0^{+}$, where $x_0$ and $z_0$ are some vertices in $V(C)$ and where $z_0\notin\{x_0^-,x_0^+\}$. The cycle  $\overrightarrow{C}$ consists of now two 3-intervals ($x_0^{-}\overrightarrow{C}x_0^{+2}=b_{i}\overrightarrow{C}b_{i+1}$ and $z_0^-\overrightarrow{C}z_0^{+2}=b_{k}Cb_{k+1}$) and several
2-intervals.  Without loss of generality, we assume that $1\leq i<k\leq m-1$.

Similar to Proposition \ref{propn} we have the following fact.

\begin{proposition}\label{propnnn}
$b_{i}\overrightarrow{C}b_{i+1}$ and $b_{k}\overrightarrow{C}b_{k+1}$ are inner-disconnected.
\end{proposition}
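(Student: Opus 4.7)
The plan is to mirror closely the proof of Proposition \ref{propn}, adapted to the simpler two-3-interval configuration. Suppose for contradiction that $b_i\overrightarrow{C}b_{i+1}$ and $b_k\overrightarrow{C}b_{k+1}$ are inner-connected. Since $x_0,z_0\in B^+$ and $x_0^+,z_0^+\in B^-$, Corollary \ref{l5c} gives $x_0z_0,x_0^+z_0^+\notin E(G)$; if both $x_0z_0^+$ and $x_0^+z_0$ were in $E(G)$, the concatenation $b_k\overleftarrow{C}x_0^+z_0z_0^+x_0\overleftarrow{C}v^+$ would be a bad path ruled out by Proposition \ref{alpha}, exactly as in Remark \ref{r1}. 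Hence exactly one of $x_0z_0^+$ and $x_0^+z_0$ lies in $E(G)$, and I split into Case 1 ($x_0z_0^+\in E(G)$) and Case 2 ($x_0^+z_0\in E(G)$).

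In Case 1, I first apply Lemma \ref{l9}(1) with $a=x_0=b_i^+$ and $c=z_0^+=b_{k+1}^-$: every inner vertex of a 2-interval on $b_{i+1}\overrightarrow{C}b_k$, as well as $x_0^+$ and $z_0$, is non-adjacent to $b_i$ and $b_{k+1}$. Arguing as in Claim \ref{clm8}, if $b_{i+1}\neq b_k$, then $b_{i+1}^+\in B^+\cap B^-$ and the pair $(x_0^+,b_{i+1}^+)$ satisfies $N(x_0^+,b_{i+1}^+)\subseteq B\cup\{x_0\}-\{b_i,b_{k+1}\}$ (using Corollary \ref{l5c} and the non-edge $x_0^+z_0$ from Remark \ref{r1}), so it is a small pair, contradicting Proposition \ref{small}. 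Thus $b_{i+1}\equiv b_k$, all 2-intervals lie on $b_{k+1}\overrightarrow{C}b_i$, and $(S_3)$ supplies at least four good ones. The analogue of Claim \ref{clm7} then shows $v^+\equiv b_i$ or $v^-\equiv b_{k+1}$: for the pair $(x_0^+,z_0)$ (at distance $2$ through $b_{i+1}\equiv b_k$), one combines Lemma \ref{l9}(1) with Lemma \ref{l10} (when $b_{i+1}\in N(u)\cap N(v)$) or Lemma \ref{l9}(2) (when $b_{i+1}v^{+2},b_{i+1}v^{-2}\in E(G)$) to obtain
\[
N(x_0^+,z_0)\subseteq B\cup\{x_0,z_0^+\}-\{b_i,b_{k+1},v^+,v^-\};
\]
in the remaining sub-case some $w_1\in\{u,v\}$ and $w_2\in\{v^{+2},v^{-2}\}$ are both non-adjacent to $b_{i+1}$, and $(w_1,w_2)$ satisfies $N(w_1,w_2)\subseteq B-\{b_{i+1}\}$, yielding a small pair. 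Hence assuming $v^+\neq b_i$ and $v^-\neq b_{k+1}$ always leads to a contradiction. Finally, since all intervals have length $2$ or $3$, Lemma \ref{inva} lets us relocate $v$ to an inner vertex $v_0$ of a good 2-interval on $b_{k+1}\overrightarrow{C}b_i$ with $v_0^+\neq b_i$ and $v_0^-\neq b_{k+1}$ (possible because of the four good 2-intervals); rerunning the Case 1 argument in the new setup forces $v_0^+\equiv b_i$ or $v_0^-\equiv b_{k+1}$, contradicting the choice of $v_0$.

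In Case 2, I apply Lemma \ref{l12} with $a=x_0^+=b_{i+1}^-$ and $c=z_0=b_k^+$: part (1) forces $b_{i+1},b_k$ both into $N(u)\setminus N(v)$ or both into $N(v)\setminus N(u)$, and part (2) gives that none of $v^{+2},v^{-2}$ is adjacent to $b_{i+1}$ or $b_k$. By the interchanging roles trick I may assume $b_{i+1},b_k\in N(v)\setminus N(u)$. If $b_{k+1}\neq v^-$, then $v^{-2}\in B^+\cap B^-$ and the non-edges above together with Corollary \ref{l5c} yield $N(u,v^{-2})\subseteq B-\{b_{i+1},b_k\}$, making $(u,v^{-2})$ a small pair. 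Hence $b_{k+1}\equiv v^-$, and symmetrically $b_i\equiv v^+$; consequently $v^{+2}\equiv x_0$ and $v^{-2}\equiv z_0^+$. Using $x_0z_0^+\notin E(G)$ (Remark \ref{r1}) together with the earlier non-edges, one checks $N(u,z_0^+)\subseteq B\cup\{z_0\}-\{b_{i+1},b_k\}$, so $(u,z_0^+)$ is a small pair whenever $b_{i+1}\neq b_k$; the degenerate case $b_{i+1}\equiv b_k$ forces $|B|=3$ with no 2-intervals, contradicting $(S_3)$.

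The main obstacle is the Case 1 bookkeeping: one must carefully track which of the vertices in $\{x_0,z_0,x_0^+,z_0^+\}$ can still be neighbors of the candidate small-pair vertices and confirm the size bound $|N(\cdot,\cdot)|\leq|B|-1$ after excluding vertices via Corollary \ref{l5c}, Lemmas \ref{l9} and \ref{l10}, and Remark \ref{r1}. The relocation step through Lemma \ref{inva} is also delicate, since it requires verifying that the reconfigured setup preserves the two inner-connected 3-intervals so that the Case 1 argument can be rerun verbatim on the new cycle triple.
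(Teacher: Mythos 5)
Your proposal is correct and takes essentially the same route as the paper: the paper omits the proof of Proposition \ref{propnnn} precisely because it is "almost identical" to that of Proposition \ref{propn}, and your write-up is a faithful and accurate adaptation of that argument (Remark \ref{r1} to split into the two cases, the small-pair contradictions via Lemmas \ref{l9}, \ref{l10}, \ref{l12} and Corollary \ref{l5c}, and the relocation of $v$ via Lemma \ref{inva}) to the two-3-interval configuration. The bookkeeping you flag as delicate does check out, e.g.\ $N(x_0^+)\subseteq B\cup\{x_0\}$ and $N(z_0)\subseteq B\cup\{z_0^+\}$ after removing the non-edge from Remark \ref{r1}.
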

Since the proofs Propositions \ref{propn} and \ref{propnnn} are almost identical, we omit the proof of Proposition \ref{propnnn}.

By Propositions \ref{propnnn} and Corollary \ref{rmk3}, all the intervals on $\overrightarrow{C}$ are pairwise inner-disconnected, so by Lemma \ref{l7} we have a contradiction. Then we deduce that Case II does not hold, and also finish the proof that $\mid V(C)\mid \not=2NC2(G)+2$.

\subsection*{\bf Acknowledgements}
Thanks to Professor Vu Dinh Hoa for introducing  to  me this conjecture.


\end{document}